\documentclass[a4paper,reqno]{amsart}
\pdfoutput=1

\usepackage{xcolor}
\usepackage{enumerate} 
\usepackage{enumitem}
\usepackage{dsfont}
\usepackage{relsize}
\usepackage{enumerate}
\usepackage{amsmath,amsfonts,amssymb,amsthm,
stmaryrd,bbm,graphicx,mathtools,enumerate,euscript}

\usepackage{bbold}
\usepackage{bbm}
\usepackage{mathrsfs}

\usepackage{mathrsfs}
\usepackage{hyphenat}
\usepackage[T1]{fontenc} 
\usepackage[latin1]{inputenc}
\usepackage[english]{babel}
\usepackage[tracking,spacing,kerning,babel]{microtype}
\usepackage{wasysym} 
\usepackage{color}
\usepackage{xcolor}
    	\usepackage{framed} 
\usepackage{wrapfig} 
\newcommand{\E}{\mathbb{E}}

\newcommand{\R}{\mathbb{R}}

\newcommand{\Cb}{\mathbf{C}}

\usepackage[final]{hyperref}   
\hypersetup{
    linktoc=page,
    linkcolor=red,          
    citecolor=blue,        
    filecolor=blue,      
    urlcolor=cyan,
   colorlinks=true           
}

\usepackage{mathalfa}

\usepackage{upgreek}
\newtheoremstyle{mine}
{\baselineskip}
{\baselineskip}
{\itshape}
{
}
{\bfseries}
{.}
{.5em}
{#1 #2\ifx#3\relax\else~(#3)\fi}

\theoremstyle{mine}

\newtheorem{theorem}{Theorem}

\numberwithin{theorem}{section}

\newtheorem{proposition}[theorem]{Proposition}
\newtheorem{lemma}[theorem]{Lemma}

\newtheorem{definition}[theorem]{Definition}

\numberwithin{equation}{section}

\theoremstyle{remark}
\newtheorem{remark}{Remark}

\title{Locally finite Fixed points of Branching Brownian Motion}
\author{Xinxin Chen, Arnab Chowdhury, Atul Shekhar, Shuo Zhu}

\address[Xinxin Chen]
{Beijing Normal University, School of Mathematical Sciences, China}
\email{xinxin.chen@bnu.edu.cn}

\address[Arnab Chowdhury]
{Tata Institute of Fundamental Research-CAM, Bangalore, India}

\email{arnab2020@tifrbng.res.in}

\address[Atul Shekhar]
{Tata Institute of Fundamental Research-CAM, Bangalore, India}
\email{atul@tifrbng.res.in}

\address[Shuo Zhu]
{Beijing Normal University, School of Mathematical Sciences, China}
\email{zhushuo@mail.bnu.edu.cn}
\begin{document}

\begin{abstract}

We give a full characterization of the fixed points of Branching Brownian motion with critical and supercritical drifts under no additional assumptions besides it being locally finite almost surely. In particular, we do not assume finite intensity (cf. \cite{Kabluchko}) or the finite top particle (cf. \cite{Shekhar_FP}) conditions. We also give a full characterization of the domain of attraction of the fixed points of BBM.    

\end{abstract}
\maketitle
\section{Introduction}\label{Section Intro}
 
 \subsection{The setup and the goal.}
 A binary Branching Brownian motion (BBM) can be described as follows: particles evolve independently of each other and according to standard Brownian motion and split into two independent particles at rate $1$. If we start a BBM with a single particle at the origin, the locations of particles at time $t$ are denoted by $\{\chi_k(t)\}_{1 \le k \le n(t)}$, where $n(t)$ is the number of particles alive at time $t$. A BBM with drift $\lambda \geq 0$ is given by $\{\chi_k(t) - \lambda t \}_{1 \le k \le n(t)}$. The study of BBM seen as a Markov process and its invariant distributions was initiated in \cite{Kabluchko}, \cite{Shekhar_FP}, \cite{Shekhar_Domain}. In this article we revisit the characterization of invariant distributions of BBM, but under no additional assumptions besides the invariant distribution being locally finite almost surely. \\

 Let $\mathcal{N}$ be the space of all locally finite point measures on $\mathbb{R}$, see section \ref{Section Laplace}. We follow the standard terminology: a point process $\theta$ is a $\mathcal{N}$-valued random variable, see [Chapter $2$-\cite{Bovier_book}]\footnote{Although this is a standard terminology, since we will also encounter random point measures which are possibly locally infinite, we stress that the term \textit{point process} will be used only for a $\mathcal{N}$-valued random variable. We will use ``random collection of points" to refer to random possibly locally infinite point measures.}. 
 
 The BBM with drift $\lambda \geq 0 $ started from a point process $\theta$ is informally defined as follows. Given $\theta$, for each atom $x_i$ of $\theta$, we run an independent BBM with drift $\lambda$ starting from $x_i$. The collection of points at time $t$ is given by  

 \begin{equation} \label{theta_t-def}
        \theta_t^\lambda:= \sum\limits_{i \in I} \sum\limits_{k=1}^{n^i(t)} \delta_{x_i+\chi_k^i(t)-\lambda t},
    \end{equation}
where $\theta_0^\lambda = \theta=\sum \limits_{i \in I} \delta_{x_i}$ and $\{\chi_k^i(t), 1 \le k \le n^i(t)\}_{i \in I}$ is a family of independent BBMs.  \\

The Markov evolution $\theta \mapsto \theta_t^\lambda$ is a priori interpreted only in an informal sense, and it is not a well-defined Markov process on $\mathcal{N}$. This is due to the fact which we refer to as \textit{coming down from $\infty$}: $\theta_t^\lambda$ may be locally infinite even if $\theta$ is locally finite. For example, if $\theta$ has $\approx e^{|x|^3}$ many particles in $(x,x+1)$ as $x\to \pm \infty$, since the Brownian transition density is $\approx e^{-x^2/2t}$, it can be easily shown using Borel-Cantelli arguments that $\theta_t^\lambda$ is locally infinite almost surely for all $t>0$. Since $\theta_t^\lambda$ may not be a well-defined point process, the ``law of $\theta_t^\lambda$'' is ill-defined. Hence, the question of invariant distributions is not well-posed. A possible remedy to cure this issue is to restrict the definition of this Markov process on a smaller state space than $\mathcal{N}$. To this end, we consider the subspace $\mathcal{N}_2 \subset \mathcal{N}$ given by \eqref{N_2-def}. It can be shown that
\begin{equation}\label{N_2-is-fixed}
    \theta \in \mathcal{N}_2 \textrm{ a.s.}  \implies \theta_t^\lambda \in \mathcal{N}_2 \textrm{ a.s. for all $t>0$},
\end{equation} 
see Lemma \ref{Proposition M_2}. In particular, for $\theta_0^\lambda = \theta \in \mathcal{N}_2$ almost surely, $\theta_t^\lambda$ is almost surely locally finite, and hence it is a well-defined point process. The Markov evolution $\theta \mapsto \theta_t^\lambda$ can be seen as a well-defined Markov process on the state space $\mathcal{N}_2$, and one can henceforth study the invariant distributions of this Markov process. However, in this approach, one only studies invariant distributions that are supported on $\mathcal{N}_2$. It is possible to miss out on possibly exotic invariant distributions that are not necessarily supported on $\mathcal{N}_2$. Therefore, to be in a complete generality, we take an alternative approach similar to that of Biskup and Louidor \cite{Biskup-Louidor-GFF-local-extreme} which also encounters an issue of \textit{coming down from $\infty$} in the context of extreme values of Gaussian free field (GFF). However, it will follow from the results of this article that the above mentioned approach based on the space $\mathcal{N}_2$ and the following alternative approach are consistent with each other. \\ 

Let $\theta$ be a point process and $\theta_t^\lambda$ be as given by \eqref{theta_t-def} which is possibly locally infinite. For $f\in C_c^+(\mathbb{R})$ (the space of compactly supported non-negative continuous functions on $\mathbb{R}$), let 

\begin{equation}
    \langle f, \theta_t^\lambda \rangle:= \int_{-\infty}^\infty f(x)\theta_t^\lambda (dx)=\sum\limits_{i \in I} \sum_{k=1}^{n^i(t)} f(x_i+\chi_k^i(t)-\lambda t).
\end{equation}

Although it can be possibly infinite, $\langle f, \theta_t^\lambda \rangle$ is a well-defined $[0, \infty]$-valued random variable. We can consider its Laplace transform $\mathbb{E}[e^{-\langle f, \theta_t^\lambda \rangle}]$ where $e^{-\langle f, \theta_t^\lambda \rangle} = 0 $ on the event $\{\langle f, \theta_t^\lambda \rangle = + \infty\}$.

\begin{definition} \label{def-fixed-point}
    A point process $\Pi$ is called a fixed point\footnote{Since this is a non-standard definition, we here and henceforth use the term \textit{fixed points} to avoid any confusion with invariant distributions.} of BBM with drift $\lambda$ if for all $f\in C_c^+(\mathbb{R})$ and for all $t\geq 0$,

    \begin{equation}\label{fixed-point-def}
        \mathbb{E}[e^{-\langle f, \theta_t^\lambda \rangle}] = \mathbb{E}[e^{-\langle f, \theta_0^\lambda \rangle}],
     \end{equation} 
    where $\theta_t^\lambda$ is given by \eqref{theta_t-def} with $\theta_0^\lambda = \Pi$. 
    
\end{definition}

The benefit of the above definition is that it does not impose any additional assumptions on the point process $\Pi$. Moreover, since $\Pi$ is almost surely locally finite, it can be easily seen that \eqref{fixed-point-def} implies that $\theta_t^\lambda$, started from $\theta_0^\lambda = \Pi$, is locally finite almost surely (take $f= \alpha g$, \textcolor{black}{with} $\alpha \to 0+$, $g \in C_c^+(\mathbb{R})$ and apply \textcolor{black}{L\'evy} continuity theorem for Laplace transforms). In particular, the law of $\theta_t^\lambda$ seen as a point process is well defined and \eqref{fixed-point-def} implies that these laws are invariant in $t\geq 0$. As an intermediate step towards characterizing fixed points $\Pi$ we will in fact prove that fixed points $\Pi \in \mathcal{N}_2$ almost surely. Using \eqref{N_2-is-fixed}, this reconfirms that $\theta_t^\lambda$ started from $\Pi$ is indeed locally finite almost surely, and it also reassures that the space $\mathcal{N}_2$ is a natural state space for defining the Markov process $\theta_t^\lambda$. \\     

It can be easily shown that if $\lambda \in [0, \sqrt{2})$, then on the event $\theta \neq 0$, $\theta_t^\lambda([-K, K]) \overset{d}{\to} +\infty$ for all $K>0$ (one way of proving this is by using \eqref{Laplace-comp-using-F-KPP-Step-1} and applying Lemma \ref{Bramson-converges-to-1}). This implies that there are no non-trivial fixed points for BBM with drift $\lambda \in [0, \sqrt{2})$. The characterization of fixed points $\Pi$ for the supercritical drift $\lambda > \sqrt{2}$ was given in \cite{Kabluchko} under an additional assumption that $\Pi$ has locally finite intensity, i.e. $\mathbb{E}[\Pi(K)] < \infty$ for all compact sets $K \subset \mathbb{R}$.
\textcolor{black}{It is proved in Section \ref{Section Additional remarks}, Proposition \ref{Prop_intensity implies top particle}} that if $\Pi$ is a fixed point of BBM with drift $\lambda >\sqrt{2}$ with locally finite intensity, then it also satisfies $\mathbb{E}[\Pi([0,\infty))] < \infty$.
In particular, $\Pi([0,\infty)) <\infty$ almost surely, which is equivalent to $\Pi$ having a finite top particle almost surely. Hence, the assumption of locally finite intensity is a stronger and more restrictive assumption than the assumption of having a finite top particle. In the critical case $\lambda = \sqrt{2}$, the assumption of locally finite intensity for fixed points $\Pi$ is not appropriate since we a posteriori know that fixed points in this case must have locally infinite intensity. In \cite{Shekhar_FP} a characterization of fixed points for $\lambda = \sqrt{2}$ was given under the weaker and less restrictive assumption of finite top particle almost surely. In both critical (resp. supercritical) cases, the fixed points are given by random shifts of the so-called critical (resp. supercritical) extremal point process of BBM; see section \ref{Subsection Extremal Process} for its precise definition. 
In this article, we give a full characterization of fixed points $\Pi$ for both supercritical and critical cases $\lambda \geq \sqrt{2}$ without any additional assumption on $\Pi$ besides it being locally finite almost surely. The answer still turns out to be random shifts of critical/ supercritical extremal point processes. Hence, there are no exotic fixed points of BBM that violate the finite top particle condition. \\

\subsection{Main results} \label{main-results-subsection}

Let $\mu = \lambda + \sqrt{\lambda^2 -2}$ for $\lambda \geq \sqrt{2}$. For $\lambda >\sqrt{2}$ it was proven in \cite{Kabluchko} that if $\theta_0^\lambda$ is PPP($\frac{1}{\sqrt{2\pi}}e^{-\mu x } dx$) (i.e. Poisson point process with intensity $\frac{1}{\sqrt{2\pi}}e^{-\mu x } dx$ ) then $\theta_t^\lambda$ given by \eqref{theta_t-def} converges in distribution as point processes to a non-trivial point process $\widetilde{\mathcal E}_\infty^\lambda$.
For $\lambda = \sqrt{2}$, it was proven in \cite{ABK13} that if $\theta_0^\lambda$ is PPP($\sqrt{\frac{2}{\pi}}(-x)e^{-\sqrt{2}x}1_{x < 0} dx$), then $\theta_t^{\sqrt{2}}$ converges in distribution as point processes to another non-trivial point process $ \widetilde{\mathcal E}_\infty^{\sqrt{2}}$. The $\widetilde{\mathcal E}_\infty^\lambda$ (resp. $ \widetilde{\mathcal E}_\infty^{\sqrt{2}}$) are the supercritical (resp. critical) extremal point processes of BBM, \textcolor{black}{which are decorated Poisson point processes}; see section \ref{Subsection Extremal Process} for a more detailed description. The critical extremal point process $\widetilde{\mathcal E}_\infty^{\sqrt{2}}$ will be simply written as $ \widetilde{\mathcal E}_\infty$. \\

Our main result is as follows. For a point process $\theta = \sum_{i \in I} \delta_{x_i}$, we write $\theta(\cdot - S)$ for the point process obtained by shifting atoms $x_i$ by $S$, i.e. $\theta(\cdot - S) = \sum_{i \in I} \delta_{x_i + S}$\footnote{If the minus sign in front of $S$ on the left hand side causes confusion to a reader, note that in the notation $\theta(\cdot)$, we treat $\theta$ as a measure which is evaluated on Borel subsets of $\mathbb{R}$.}.

\begin{theorem}\label{main-thm}
Let $\lambda \geq \sqrt{2}$ and $\Pi$ be a point process. Then $\Pi$ is a fixed point of BBM with drift $\lambda$ in the sense of Definition \ref{def-fixed-point} if and only if there exists a $[-\infty, \infty)$-valued random variable $S$ independent of $\widetilde{\mathcal E}_\infty^\lambda$ such that 
\[ \Pi \overset{d}{=} \widetilde{\mathcal E}_\infty^\lambda (\cdot - S). \]
\end{theorem}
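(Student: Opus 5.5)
The \emph{if} direction is routine. The approach for the \emph{only if} direction is to pass to Laplace functionals, use the F-KPP representation of BBM, and exploit the known convergence of F-KPP solutions to travelling waves.

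\textbf{The \emph{if} direction.} First check that $\widetilde{\mathcal E}_\infty^\lambda$ is itself a fixed point. It is the limit of $\theta_t^\lambda$ started from the appropriate Poisson point process, and the Markov property in $t$ is compatible with this limit; alternatively one uses its known Laplace functional $\exp(-C\int (1-w_f(x))e^{-\mu x}dx)$ (with the $(-x)$ correction when $\lambda=\sqrt2$), which is invariant under the F-KPP flow with drift. Then condition on $S$: shifts commute with the dynamics, and the case $S=-\infty$ gives the zero measure, which is trivially fixed.

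\textbf{Reduction to an F-KPP problem.} Fix $f\in C_c^+(\mathbb{R})$. Conditioning on $\Pi=\sum\delta_{x_i}$ gives
\[ \mathbb{E}[e^{-\langle f,\theta_t^\lambda\rangle}] = \mathbb{E}\Big[\prod_i u_f(t,x_i)\Big],\]
where $u_f(t,x)=\mathbb{E}[e^{-\sum_k f(x+\chi_k(t)-\lambda t)}]$. Here $1-u_f$ solves the F-KPP equation with drift and initial data $1-e^{-f}$; this is the identity \eqref{Laplace-comp-using-F-KPP-Step-1}. The fixed-point property says this expectation is constant in $t$. Letting $t\to\infty$, Bramson's convergence of F-KPP solutions (Lemma \ref{Bramson-converges-to-1} and its travelling-wave refinement) gives
\[ 1-u_f(t,x+m^\lambda(t)) \to 1-w_f(x) \]
uniformly. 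Here $m^\lambda(t)$ is the relevant centering: for $\lambda>\sqrt2$ it is linear in $t$ because the front moves; for $\lambda=\sqrt2$ it is $-\tfrac{3}{2\sqrt2}\log t$. The travelling wave has tail $1-w_f(x)\sim C_f e^{-\mu x}$, respectively $C_f(-x)e^{-\sqrt2 x}$, with the constant $C_f$ depending on $f$. Heuristically one then obtains
\[ \mathbb{E}[e^{-\langle f,\Pi\rangle}] = \lim_t \mathbb{E}\Big[\exp\Big(-C_f\, Z_t\Big)\Big], \qquad Z_t:=\sum_i e^{-\mu(x_i-m^\lambda(t))}\times(\text{critical correction}), \]
so $Z_t$ is a "derivative-martingale-like" functional of $\Pi$ viewed from position $m^\lambda(t)$.

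\textbf{Main obstacle: tightness and non-degeneracy without a finite top particle.} One must show that $Z_t$ (suitably defined) is tight in $[0,\infty]$ along $t\to\infty$, and that atoms of $\Pi$ far to the right or left contribute nothing exotic. Without finite intensity or a finite top particle, atoms of $\Pi$ near $+\infty$ could make $\prod_i u_f(t,x_i)$ vanish. This is where I would prove that fixed points lie in $\mathcal N_2$.

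The first attempt is to choose test functions $f=\alpha\,1$-type bumps placed at various locations and use constancy in $t$. For small $t$, the product already lower-bounds the probability that no particles land in a compact set. This forces $\sum_i e^{-\mu x_i}$-type sums, and with them $\Pi$'s mass on the right, to be finite almost surely; a Borel-Cantelli and Gaussian-tail comparison should control the left side for membership in $\mathcal N_2$.

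Once tightness holds, extract a subsequential limit $Z$ of $Z_t$, with values in $[0,\infty]$ and independent of $f$. The product is a monotone function of the configuration, and $1-u_f$ is uniformly comparable to the wave tail on $\{x_i\ge m^\lambda(t)-A\}$. Together these justify exchanging the limit and give
\[ \mathbb{E}[e^{-\langle f,\Pi\rangle}] = \mathbb{E}[e^{-C_f Z}] \]
for all $f$.

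\textbf{Identifying the law.} The right-hand side is exactly the Laplace functional of $\widetilde{\mathcal E}_\infty^\lambda(\cdot-S)$ with $S=\mu^{-1}\log(Z/c)$, using $C_{f(\cdot-s)}=C_f e^{\mu s}$ and setting $S=-\infty$ on $\{Z=0\}$. The event $\{Z=\infty\}$ must be excluded; there the Laplace functional would vanish, contradicting local finiteness of $\Pi$. Since Laplace functionals on $C_c^+$ determine laws of point processes, this completes the proof.
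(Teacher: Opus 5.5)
Your \emph{if} direction is fine. The \emph{only if} direction has two genuine gaps.

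\textbf{The asymptotics you build on are wrong for $\lambda>\sqrt2$.} For $f\in C_c^+(\mathbb R)$, $1-u_f(t,x)=u_\phi(t,\lambda t-x)$ is pulled towards the \emph{critical} wave, whose tail is $ye^{-\sqrt2 y}$, not $e^{-\mu y}$. More importantly, the atoms that contribute in the limit sit in the window $E_t^6$ around $-\sqrt{\lambda^2-2}\,t$. That is a distance $\alpha(\lambda)t=(\mu-\sqrt2)t$ behind the front, in the large-deviation regime of Lemma~\ref{C(a) definition}: $u_\phi(t,\lambda t-x)\sim C(f,\alpha(\lambda))\,t^{-1/2}e^{(1-\lambda^2/2)t}e^{\lambda x}e^{-x^2/2t}$. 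This is smaller than any wave-tail extrapolation by a factor $e^{-\alpha(\lambda)^2t/2}$, up to polynomial terms.

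The Gaussian localisation $e^{-x^2/2t}$ is indispensable. Your $Z_t=\sum_i e^{-\mu(x_i-m^\lambda(t))}$, with either sign in the exponent, is a.s.\ infinite already for $\Pi=\widetilde{\mathcal E}_\infty^\lambda$, which has $\asymp e^{-\mu x}$ atoms near $x$ as $x\to-\infty$.

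The constant also matters. Identifying $\widetilde{\mathcal E}_\infty^\lambda(\cdot-S)$ through \eqref{supercritical-E-Laplace-fromula-with-shift} requires exactly $C(f,\alpha(\lambda))$, which shifts with $e^{\mu s}$. A critical-wave constant is $\Cb(f)$, which shifts with $e^{\sqrt2 s}$ and is the Laplace exponent of the critical process (decoration $\mathcal D$, not $\mathcal D^\mu$); using it would identify the wrong process.

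For $\lambda=\sqrt2$, your unspecified ``critical correction'' must be $t^{-3/2}(-x)e^{\sqrt2x}e^{-x^2/2t}$. The relevant atoms are at $|x|\asymp\sqrt t$, where the Gaussian factor is not negligible and the wave tail is inaccurate. This is why Bramson's $\psi$-estimate, via Proposition~\ref{excerpt-from-CGS-AAP}, is needed.

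\textbf{No mechanism for discarding the remaining atoms.} This is the heart of the paper, and your sketch does not supply it. Bumps, small $t$ and Borel--Cantelli give at best $\Pi\in\mathcal N_2$, which is qualitative and carries no uniformity in $t$. Saying the fixed-point identity ``forces'' finite mass on the right restates the theorem's conclusion rather than proving it. Nor does ``monotonicity plus comparability on $\{x_i\ge m^\lambda(t)-A\}$'' justify exchanging limits: near the front there are w.h.p.\ no atoms at all, and the contributions that must be controlled lie elsewhere.

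The paper applies its domain-of-attraction analysis to $\theta=\Pi$ and uses three ingredients your proposal lacks.
(i) Thin each BBM to one uniformly chosen, hence Brownian, particle. The thinned count in $[-K,K]$ is at most $\theta_t^\lambda([-K,K])$, so it is tight. Given $\Pi$ it is a sum of independent Bernoullis, so Lemma~\ref{Lemma_bernoulli} gives tightness in $t$ of $\int\mathbb P(|x+B_t-\lambda t|\le K)\Pi(dx)$. For $|x|>bt$ and $|x|<at$ the only usable bound is $u_\phi\le e^t\mathbb P(|x+B_t-\lambda t|\le K)$, since Bramson's estimates fail there. Lemma~\ref{Lemma Ratio 1} compares this with the same integral at a \emph{different} time $u$, where it is tight, and so kills those regions.
(ii) An atom in the region where $u_\phi\to1$ (Lemma~\ref{Bramson-converges-to-1}) would annihilate the Laplace functional. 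Hence $\Pi([ct,dt])\to0$ in probability, or $\Pi([c\log t,dt])\to0$ when $\lambda=\sqrt2$.
(iii) The maximal-particle thinning, combined with lower bounds on $u_M(t,\cdot-K)-u_M(t,\cdot+K)$, gives tightness of the Gaussian-weighted functional. A time shift $u=t\pm L\sqrt t/\gamma$, with $\gamma=\sqrt{\lambda^2-2}$, then removes $E_t^4$ and $E_t^5$.

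Only after all of this does your final subsequence argument go through on $E_t^6$.
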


\begin{remark} \label{infinite-S}

It may seem awkward to see the random variable $S$ to be $[-\infty, \infty)$ valued, i.e. it may take $-\infty$ value with positive probability. On the event $S= -\infty$, the point process $\widetilde{\mathcal E}_\infty^\lambda (\cdot - S)$ is interpreted as an empty point process since $\widetilde{\mathcal E}_\infty^\lambda$ almost surely has a finite top particle and all atoms of $\widetilde{\mathcal E}_\infty^\lambda$ will be pushed to $-\infty$ by the shift $S= -\infty$. This is coherent with the fact that the fixed point $\Pi$ can potentially be an empty point process with positive probability: empty point process is obviously fixed point. Also, note that a fixed point, conditional on being non-empty, remains a fixed point. One can hence, without loss of generality, assume that the fixed point $\Pi$ is non-empty, i.e. $\Pi(\mathbb{R}) \neq 0$ almost surely. Under this condition, the corresponding random variable $S$ will be a proper real-valued random variable. 

\end{remark}

\vspace{2mm}

Theorem \ref{main-thm} will be derived as a consequence of addressing another related problem of the \textit{domain of attraction of fixed points of BBM}. For a point process $\Pi$,
we define its domain of attraction as follows: 

\begin{definition}\label{Def_DOA}

    For a point process $\Pi$, we say a point process $\theta$ is in the domain of attraction of $\Pi$, written as  $ \theta \in \textrm{Dom}_{\lambda}(\Pi)$, if $\theta_t^\lambda$ given by \eqref{theta_t-def} started from $\theta_0^\lambda = \theta$ converges to $\Pi$ in the sense that for all $f \in C_c^+(\mathbb R),$ 
    \begin{equation}
        \mathbb E[e^{-\langle f , \theta_t^\lambda \rangle}]\longrightarrow \mathbb E[e^{-\langle f , \Pi \rangle}]
    \end{equation}
    as $t \to \infty.$
\end{definition}

Note that the above definition, similarly to Definition \ref{def-fixed-point}, bypasses the issue that $\theta_t^\lambda$ can possibly be locally infinite. Nevertheless, as it turns out, if a point process $\theta \in Dom_{\lambda}(\Pi)$ for a point process $\Pi$, then $\theta_t^\lambda$ must be locally finite almost surely. It is easy to heuristically support this claim: since the initial point process $\theta$ and the limiting point process $\Pi$ are locally finite, the intermediate collection of points $\theta_t^\lambda$ must also be locally finite. We will, in fact, prove that for $\theta \in Dom_{\lambda}(\Pi)$, $\theta \in \mathcal{N}_2$ almost surely, which, using \eqref{N_2-is-fixed}, implies that $\theta_t^\lambda$ is a well-defined point process. Hence, Definition \ref{Def_DOA} implies that $\theta_t^\lambda$ converges in distribution to $\Pi$ as point processes. \\

It follows easily from the Markovian nature of the evolution $\theta \mapsto \theta_t^\lambda$ that for a point process $\Pi$, $Dom_{\lambda}(\Pi) \neq \emptyset$ if and only if  $\Pi$ is a fixed point of BBM with drift $\lambda$ in the sense of Definition \ref{def-fixed-point}. Hence, without loss of generality, we assume in Definition \ref{Def_DOA} that $\Pi$ is a fixed point of BBM with drift $\lambda$. The proof of Theorem \ref{main-thm} will run in parallel and will follow as a result of addressing the following problem of characterizing $Dom_{\lambda}(\Pi)$ of fixed points $\Pi$. Let us first coin the following definition. The significance of this definition will be addressed in the next section.

\begin{definition}\label{def-lacunary}
\begin{enumerate}
    \item A point process $\theta$ is called \textbf{lacunary} on $(0,\infty)$ if there exist constants $c,d \in (0, \infty)$, $c<d$, such that  
    \begin{equation} \label{lacunary-1}
        \theta([cx, d x ]) \overset{\mathbb{P}}{\rightarrow} 0
    \end{equation} 
    as $x \rightarrow \infty $\footnote{The intuitive meaning of this definition is that atoms of the point process $\theta$ in $(0,\infty)$ are extremely spaced out and have large gaps, hence the choice of the word \textbf{lacunary}.}. 

    \item A point process $\theta$ is called \textbf{exponentially-lacunary} on $(0,\infty)$ if there exist constants $c,d \in (0, \infty)$, $c<d$, such that 
    \begin{equation}\label{lacunary-2}
         \theta([c \log x, d x ]) \overset{\mathbb{P}}{\rightarrow} 0
    \end{equation}
    as $x \rightarrow \infty $. 
\end{enumerate}
\end{definition}

Let us list the conditions appearing in the next result. \textcolor{black}{Throughout this paper, we will refer to a family of random variables $\{X_t\}_{t>0}$ as \textit{tight} if $\sup_{t\geq 1} \mathbb{P}(|X_t|\geq K ) \to 0$ as $K\to +\infty$. In particular, we do not put any tightness assumption on $X_t$ for $t \in (0,1)$.} For $\lambda >\sqrt{2}$, we assume  \\

\begin{enumerate}
    \item [(H1):] $\theta$ is \textbf{lacunary} on $(0,\infty)$ and \textcolor{black}{$\theta \in \mathcal N_2$} \footnote{\textcolor{black}{Note that, $\mathcal N_2$ is defined in \eqref{N_2-def}.}} a.s.
    \item [(H2):] For all $K>0$,  
    \[\int_{-\infty}^{\infty} \mathbb{P}(|x+ B_t - \lambda t | \leq K)\theta(dx) \textrm{ and }\int_{- \infty}^0 \frac{1}{\sqrt{t}}e^{(1-\frac{\lambda^2}{2})t}e^{\lambda x}e^{-\frac{x^2}{2t}}\theta(dx)\] 
    are tight in $t\geq 1$.

 \item [(H3):] As $t \to \infty$ and then $L\to \infty$, 
 \[ Z_{t,L}:=\int_{(\lambda -\mu) t - L\sqrt{t}}^{(\lambda -\mu)t + L\sqrt{t}} \frac{1}{\sqrt{t}}e^{(1-\frac{\lambda^2}{2})t}e^{\lambda x}e^{-\frac{x^2} {2t}}\theta(dx) \overset{d}{\to} Z,\]
 where $\mu = \lambda + \sqrt{\lambda^2-2}$ and $Z$ is some non-negative random variable. 
   
\end{enumerate}

\vspace{6mm}
For $\lambda = \sqrt{2}$, we assume  \\

\begin{enumerate}
    \item [(A1):] $\theta$ is \textbf{exponentially-lacunary} on $(0,\infty)$ and \textcolor{black}{$\theta \in \mathcal N_2$ a.s.}
    \item [(A2):] For all $K>0$,  
    \[\int_{-\infty}^{\infty} \mathbb{P}(|x+ B_t - \sqrt{2} t | \leq K)\theta(dx) \textrm{ and } \int_{-\infty}^{\frac{1}{4} \log t }\frac{1}{t^{3/2}}(-x + \log t)e^{\sqrt{2}x} e^{-\frac{x^2}{2t}} \theta(dx)\] 
    are tight in $t\geq 1$. 
    \item [(A3):] As $t\to \infty$, 
    \[ \int_{-\infty}^{0}\frac{1}{t^{3/2}}(-x)e^{\sqrt{2}x} e^{-\frac{x^2}{2t}} \theta(dx) \overset{d}{\to} Z,\]
    \textcolor{black}{where} $Z$ is some non-negative random variable. 
\end{enumerate}

\begin{theorem} \label{main-thm-2}
    Let $\Pi$ be a fixed point of BBM with drift $\lambda \geq \sqrt{2}$ and $\theta$ be a point process. Then,   
\begin{enumerate}
    \item For $\lambda > \sqrt{2}$, $\theta \in Dom_{\lambda}(\Pi)$ if and only if $\theta$ satisfies (H1), (H2), (H3). In that case, $\Pi \overset{d}{=}\widetilde{\mathcal E}_\infty^\lambda(\cdot - S)$ with $S = \log(Z)/\mu$, where $\widetilde{\mathcal E}_\infty^\lambda$ and $Z$ are sampled independently of each other \footnote{Note that the random variable $Z$ appearing here can be $0$ with positive probability. As such, comparing with Remark \ref{infinite-S}, $S$ is a $[-\infty, \infty)$-valued random variable and $\Pi = \widetilde{\mathcal E}_\infty^\lambda(\cdot - S)$ is an empty point process on the event $S= -\infty$.}. \\

\item For $\lambda = \sqrt{2}$, $\theta \in Dom_{\lambda}(\Pi)$ if and only if $\theta$ satisfies (A1), (A2), (A3). In that case, $\Pi \overset{d}{=}\widetilde{\mathcal E}_\infty(\cdot - S)$ with $S = \log(Z)/\sqrt{2}$, where $\widetilde{\mathcal E}_\infty$ and $Z$ are sampled independently of each other\footnote{The critical case $\lambda = \sqrt{2}$ in this result was also addressed in \cite{Shekhar_Domain} although with an additional assumption of $\theta$ having a finite top particle and with a slightly stronger notion of convergence as compared to Definition \ref{Def_DOA}.}. 
\end{enumerate}

\end{theorem}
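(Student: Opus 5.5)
The plan is to reduce everything to the F-KPP equation through the Laplace functional. Given $\theta=\sum_i\delta_{x_i}$ and $f\in C_c^+(\mathbb R)$, the branching property gives
\[
\mathbb E[e^{-\langle f,\theta_t^\lambda\rangle}\mid\theta]=\prod_i\bigl(1-u(t,x_i-\lambda t)\bigr),
\]
where $u$ solves F-KPP with initial condition $1-e^{-f}$. This is the computation behind \eqref{Laplace-comp-using-F-KPP-Step-1}. The product is zero exactly when $\sum_i u(t,x_i-\lambda t)=\infty$. Since $u$ is small wherever it is relevant, $\log(1-u)\approx -u$, so the whole question becomes the convergence in law of $\Sigma_t:=\sum_i u(t,x_i-\lambda t)$.

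The first step is to split $u$ by the location of the atom. In the bulk window (around $(\lambda-\mu)t\pm L\sqrt t$ for $\lambda>\sqrt2$, and the diffusive region $x<0$ with $|x|\lesssim\sqrt t$ for $\lambda=\sqrt2$), I will use the sharp asymptotics for F-KPP in moving frames (Bramson / Lalley--Sellke type, presumably stated earlier as Lemma \ref{Bramson-converges-to-1} and its companions). These give
\[
u(t,x-\lambda t)\approx C(f)\,t^{-1/2}e^{(1-\lambda^2/2)t}e^{\lambda x}e^{-x^2/2t}
\]
in the supercritical case, and $C(f)\,t^{-3/2}(-x)e^{\sqrt2 x}e^{-x^2/2t}$ in the critical case. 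Here $C(f)$ is exactly the constant in the Laplace functional of $\widetilde{\mathcal E}^\lambda_\infty$, namely $\mathbb E[e^{-\langle f,\widetilde{\mathcal E}^\lambda_\infty(\cdot-s)\rangle}]=\mathbb E[e^{-C(f)e^{\mu s}}]$. With this, (H3)/(A3) give $\Sigma_t\to C(f)Z$ in law, and therefore
\[
\mathbb E[e^{-\langle f,\theta^\lambda_t\rangle}]\to\mathbb E[e^{-C(f)Z}]=\mathbb E[e^{-\langle f,\widetilde{\mathcal E}(\cdot-S)\rangle}],\qquad S=\log Z/\mu .
\]
The remaining contributions must be negligible. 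Atoms far to the right would be a problem, but lacunarity (H1)/(A1) excludes them, since atoms in $[ct,dt]$ (resp. $[c\log t,dt]$) are exactly the ones that would be pulled into the window. The other tail regions are handled by the tightness conditions (H2)/(A2): the first integral controls atoms whose ancestor itself hits the window, and the second gives a uniformly small tail outside $L\sqrt t$ once $L\to\infty$, via the upper bound $u\lesssim$ first moment. Membership in $\mathcal N_2$ keeps $\Sigma_t$ finite.

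For the converse, I assume $\theta\in\mathrm{Dom}_\lambda(\Pi)$ and take $f=\alpha g$ with $\alpha\to0$. The upper bound $u\le$ first moment, together with the matching lower bound $u\ge c\cdot$ first moment (from a second-moment, Paley--Zygmund style bound), shows that convergence of Laplace transforms forces tightness of $\Sigma_t$. This yields (H2)/(A2) and $\theta\in\mathcal N_2$. Lacunarity is obtained by contradiction: a nonvanishing mass in $[cx,dx]$ is pushed by BBM into a compact set at times $t\asymp x$ with probability bounded below, and this contradicts local finiteness of the limit. Once tightness holds, compactness plus the bulk asymptotics identify the limit law of $Z_{t,L}$. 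Then $\mathbb E[e^{-C(f)Z}]$ is determined for all $f$, and since $C(\alpha g)$ ranges over an interval, this pins down the law of $Z$ and gives (H3)/(A3).

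Theorem \ref{main-thm} then follows: a fixed point $\Pi$ lies in its own domain, so $\Pi\overset d=\widetilde{\mathcal E}(\cdot-S)$. I expect the main obstacle to be the uniform F-KPP estimates on $u(t,x-\lambda t)$ over all $x$, including the critical logarithmic corrections near $x\sim\frac14\log t$. These are what allow the regions to be matched to the precise integrals in the hypotheses without assuming a finite top particle.
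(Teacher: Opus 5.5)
Your overall architecture matches the paper's: reduce to $\sum_i u$, split by the location of the atom, use lacunarity on the middle-right block, use Bramson asymptotics in the bulk, and identify $Z$ through subsequential limits. However, the two steps that carry the real weight are either wrong or missing.

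\textbf{The lower bound in the converse is false.} Your claimed ``matching lower bound $u\ge c\cdot$ first moment'' fails in two places. First, $u\le 1$, while $e^t\mathbb P(|x+B_t-\lambda t|\le K)$ is exponentially large on the lacunary block. Second, at $\lambda=\sqrt2$, Lemmas~\ref{Lemma upper and lower of u} and \ref{critical-case-lower-bound} give $u_M\asymp t^{-3/2}(-x+\log t)e^{\sqrt2x}e^{-x^2/2t}$. This is smaller than the first moment $\asymp t^{-1/2}e^{\sqrt2x}e^{-x^2/2t}$ by the factor $(-x+\log t)/t$, which is exactly why (A2) carries that weight. If your bound held, you would get tightness of $\int_{-\infty}^0t^{-1/2}e^{\sqrt2x}e^{-x^2/2t}\theta(dx)$. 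That integral is of order $\sqrt t$ for the Poisson initial condition that defines $\widetilde{\mathcal E}_\infty$.

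Note also that the first integral in (H2)/(A2) has no factor $e^t$; it is a one-particle probability. The correct route uses thinnings. $\langle f,\theta_t^\lambda\rangle$ dominates the counts of $U(\theta_t^\lambda)$ and $M(\theta_t^\lambda)$, which are conditionally independent Bernoulli sums. This gives tightness of $Z_t^a$ and $Z_t^b$ (Proposition~\ref{crucial-tightness-prop}). The lower bounds on $u_M(t,\cdot-K)-u_M(t,\cdot+K)$ in Lemmas~\ref{lemma_upper and lower for F-KPP}(b) and \ref{Lemma-on-difference-U_M-for-critical} then convert $Z_t^b$ into the explicit weights on $[-bt,0]$ (resp.\ $[-bt,\frac14\log t]$).

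\textbf{The lacunarity argument is too weak.} ``Pushed into a compact set with probability bounded below'' contradicts nothing. You need the number of descendants in the compact set to diverge, i.e.\ $u_\phi\to1$ uniformly on $Q_t^\delta$ for every $\rho>0$ (Lemma~\ref{Bramson-converges-to-1}), followed by $\rho\to0$.

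\textbf{The mechanism for the off-window regions is missing.} In both directions you never explain how tightness makes these regions negligible, and tightness at the same time $t$ cannot do it. On $E_t^1\cup E_t^2$, the quantity to control, $\int e^t\mathbb P(|x+B_t-\lambda t|\le K)\theta(dx)$, is $e^t$ times the tight one. Tightness of $Z_t$ also says nothing about its restriction to $E_t^4\cup E_t^5$ as $L\to\infty$.

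The missing idea is comparison across times. On these regions the time-$t$ weight is bounded by the weight at another time $u$ times a small factor:
\begin{itemize}
\item on $E_t^1\cup E_t^2$, the factor is uniformly $o(1)$ (Lemma~\ref{Lemma Ratio 1}, with $u<t$ for $E_t^1$ and $u>t$ for $E_t^2$);
\item on $E_t^4\cup E_t^5$, the factor is $\lesssim e^{-L^2/2}$, with $u=t\pm L\sqrt t/\sqrt{\lambda^2-2}$.
\end{itemize}
So the contribution is at most $o(1)Z_u^a$, respectively $e^{-L^2/2}Z_u$, and tightness uniform in $u\ge1$ finishes. The same device, with $u=\sqrt t$, handles $[0,a\log t)$ at $\lambda=\sqrt2$. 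It also handles the region $x<-bt$ when deriving (H2)/(A2).

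Finally, at $\lambda=\sqrt2$ your bulk asymptotic with the factor $(-x)$ is valid only for $-x\gg\log t$. The paper sidesteps this by applying Proposition~\ref{excerpt-from-CGS-AAP} to $\theta$ restricted to $(-\infty,0)$. Your identification of the law of $Z$ from $\mathbb E[e^{-C(f)Z}]$ over a range of values $C(f)$ is fine.
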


\begin{remark}\label{about-lacunary}

It can be easily seen that if \eqref{lacunary-1} or \eqref{lacunary-2} holds for some $c,d \in (0,\infty)$, then it holds for all $c,d \in (0,\infty)$. Also, \eqref{lacunary-1} is equivalent to $\theta([x, \alpha x]) \overset{\mathbb{P}}{\to} 0$ as $x\to \infty$ for some $\alpha>1$ (and hence for all $\alpha >1$). As such, the condition \eqref{lacunary-1} \textcolor{black}{is closely related to} slowly varying functions, see \cite{Bingham_Regular_variation}. Indeed, if $F(x) = e^{\theta([0,x])}$, then \eqref{lacunary-1} implies $F(\alpha x )/F(x) \overset{\mathbb{P}}{\rightarrow} 1$ as $x\to \infty$ for some $\alpha >1$.
\end{remark}

\begin{remark}\label{comparison-to-top-particle-assumption}
It is instructive to compare condition \eqref{lacunary-1} with the finite top particle condition $\theta([0,\infty)) < \infty$ almost surely. In combination with the fact that $\theta([0,x])$ is integer valued, \eqref{lacunary-1} is a rather stringent condition. In fact, if $\theta$ is a deterministic locally finite integer-valued measure, \eqref{lacunary-1} holds if and only if $\theta([cx, dx]) = 0$ for all $x$ large enough. This holds if and only if $\theta ([0,\infty)) < \infty$, i.e., $\theta$ has a finite top particle. However, if we ignore the integer valued constraint on $\theta$ and view it as just a positive measure, by utilizing the above mentioned connection to slowly varying functions, one can easily come up with examples of positive measures satisfying \eqref{lacunary-1} and $\theta([0,\infty)) =\infty$, e.g. $ \theta (dx) = \frac{1}{x\log x}1_{x \geq 2}dx$. Combining these observations, one can come up with examples of truly random point processes satisfying \eqref{lacunary-1} but not having a finite top particle, e.g. $\theta = PPP( \frac{1}{x\log x}1_{x \geq 2 }dx)$. Constructing an example of a point process satisfying \eqref{lacunary-2} but not having a finite top particle is only slightly more intricate. Let $x_n$ be the sequence defined by $x_1 =e$ and $x_{n+1} = e^{x_n}$ (it is the \textit{power tower} sequence). Let $G(x) = \sum_{n=1}^{\infty} 1_{x_n \leq x}$ and $m$ be the measure on $(0,\infty)$ defined by $m(0,x] = \log G(x)$. Note that \eqref{lacunary-2} is equivalent to $\theta([x,e^x]) \overset{\mathbb{P}}{\to} 0$ as $x\to \infty$. It can be easily checked that $m$ is an infinite measure satisfying $m([x,e^x]) \to 0$ as $x\to \infty$, which implies that $\theta = PPP(m(dx))$ satisfies \eqref{lacunary-2} and $\theta([0,\infty))= +\infty$ almost surely.   

\end{remark}

\begin{remark}

As it can be seen from Theorem \ref{main-thm-2}, it is not necessary that $\theta \in Dom_{\lambda}(\Pi)$ has a finite top particle. However, if $\theta \in Dom_{\lambda}(\Pi)$ is deterministic, it then follows from Remark \ref{comparison-to-top-particle-assumption} that $\theta$ must have a top particle. Hence, for deterministic $\theta \in Dom_{\lambda}(\Pi)$, conditions (A1),(A2) are redundant for part-(2) of Theorem \ref{main-thm-2} and they can be replaced with the top particle assumption $\theta([0,\infty)) <\infty$. As such, it can be compared to the results of \cite{Shekhar_Domain}.

\end{remark}

\subsection{Heuristic ideas and outline of the proof.}

Recall from Section \ref{main-results-subsection} the definition $\widetilde{\mathcal E}_\infty^\lambda$ as the limit of $\theta_t^\lambda$ starting from 
$\theta = PPP(\frac{1}{\sqrt{2\pi}}e^{-\mu x } dx)$ for $\lambda > \sqrt{2}$ and starting from $\theta = PPP(\sqrt{\frac{2}{\pi}}(-x)e^{-\sqrt{2}x}1_{x < 0} dx)$ for $\lambda=\sqrt 2$. Since we expect the fixed points $\Pi$ to be random shifts of $\widetilde{\mathcal E}_\infty^\lambda$, it is natural to expect that for $\theta \in Dom_{\lambda}(\Pi)$, the number of particles of $\theta$ at a bounded distance from $x$ should be $\approx e^{-\mu x }$ for $\lambda >\sqrt{2}$ and $\approx (-x)e^{-\sqrt{2}x}$ for $\lambda = \sqrt{2}$ as $x\rightarrow -\infty$. The method to make this guess mathematically precise was understood in \cite{Shekhar_FP}, \cite{Shekhar_Domain} for the critical case $\lambda = \sqrt{2}$, and the similar arguments also apply to $\lambda >\sqrt{2}$. This is reflected in the conditions (H3)/(A3) appearing in Theorem \ref{main-thm-2}. 
However, for a $\theta \in Dom_{\lambda}(\Pi)$, the structural information about particles of $\theta$ at a bounded distance from $x$ as $x \to +\infty$ was not understood in \cite{Shekhar_FP},\cite{Shekhar_Domain}. 
The results of \cite{Shekhar_FP}, \cite{Shekhar_Domain} were hence proven under the additional assumption that $\theta$ has a finite top particle almost surely. This assumption overlooked our above mentioned incomplete understanding. The main new input in this article is to fill this gap since we do not assume $\theta$ to have a finite top particle. This is given by the lacunary conditions appearing in Theorem \ref{main-thm-2} as per Definition \ref{def-lacunary}. \\      

We now elaborate on the significance of lacunary conditions appearing in Theorem \ref{main-thm-2}. For a fixed point $\Pi$ and $\theta \in Dom_{\lambda}(\Pi)$, it is natural to look at the cloud of \textcolor{black}{leading} particles of $\theta_t^\lambda$. More precisely, let $M(\theta_t^\lambda)$ be the thinned collection of points given by 
\begin{equation}\label{def-thinning-M}
    M(\theta_t^\lambda):= \sum\limits_{x_i \in \theta} \delta_{x_i+M_t^i-\lambda t},
\end{equation}
where $M_t^i$ is the maximum particle of BBM $\{\chi_k^i(t)\}_{k \leq n^i(t)}$ appearing in the definition \eqref{theta_t-def} of $\theta_t^\lambda$. While studying the large time behaviour of $\theta_t^\lambda$, it is natural to first study the large time behaviour of $M(\theta_t^\lambda)$. Note that $M(\theta_t^\lambda)$ is a (non-Markovian) independent particle system since $\{M_t^i\}$ are i.i.d. over $i \in I$. For $\theta \in Dom_{\lambda}(\Pi)$, since $\theta_t^\lambda$ converges to the non-trivial limit $\Pi$, it is natural to anticipate that $M(\theta_t^\lambda)$ converges to a non-trivial limit, and since it is an independent particle system, the limit must be a Poisson point process. However, it is a priori not clear what would be the intensity measure of this limiting Poisson point process. \\

Instead of picking the maximum particle $M_t^i$, it would also be beneficial to pick a uniformly chosen particle from the collection $\{\chi_k^i(t)\}_{k \leq n^i(t)}$. One can choose such a particle adaptively as follows. We attach a tag to the first particle of the BBM $\{\chi_k^i(t)\}_{k \leq n^i(t)}$ that starts from the origin. At each branching event, the tag is transferred to one of the children uniformly at random. Let $B_t^i$ denote the location of the tagged particle at time $t$. Clearly, $B_t^i$ is a standard Brownian motion. Instead of choosing $M_t^i$, we can also choose $B_t^i$ to obtain a thinned collection of points 
\begin{equation}\label{def-thinning-U}
    U(\theta_t^\lambda) := \sum\limits_{x_i \in \theta} \delta_{x_i+B_t^i-\lambda t}
\end{equation}

The advantage of picking a uniformly chosen particle is that the evolution $\theta \mapsto U(\theta_t^\lambda)$ is Markovian. Such systems were introduced and studied by Liggett \cite{Liggett}. By the results of Liggett \cite{Liggett}, the limit of $U(\theta_t^\lambda)$ as $t \to \infty$ is a Poisson point process, which is also an invariant distribution for the evolution $\theta \mapsto U(\theta_t^\lambda)$. Hence, the intensity measure of this limiting Poisson point process satisfies a Choquet-Deny equation, see Section 1.5.1 of \cite{Shekhar_FP} for details. Solving this Choquet-Deny equation yields that the intensity measure has to be a linear combination $Z_\infty e^{-2\lambda x} dx + Y_\infty dx$ for some (possibly random) non-negative constants $Z_\infty, Y_\infty$, see \cite{Shekhar_Liggett} for an alternative direct approach. Note that on the event $Y_\infty >0$, the $PPP(Z_\infty e^{-2\lambda x} dx + Y_\infty dx)$ does not have a finite top particle. \\

For the thinning $M(\theta_t^\lambda)$, since it is non-Markovian, the limiting Poisson point process obtained as the limit of $M(\theta_t^\lambda)$ as $t\to \infty$ is not necessarily an invariant distribution for the evolution $\theta \mapsto M(\theta_t^\lambda)$. Hence, we do not have a Choquet-Deny type equation at our disposal to determine the intensity measure of this limiting PPP. However, resembling the case of $U(\theta_t^\lambda)$, we guess and conjecture that the intensity measure in this case has to be of the form $A_\infty e^{-\mu x} dx + B_\infty dx$, where $\mu = \lambda + \sqrt{\lambda^2 -2}$. We emphasize that $B_\infty$ can be positive with positive probability. For example, if $\theta_0^\lambda =\theta = PPP(dx)$, then $M(\theta_t^\lambda)$ is also distributed as $PPP(dx)$ for all $t>0$ ($PPP(dx)$ is invariant for any independent particle system, Markovian or non-Markovian). We again note that on the event $B_\infty >0$, $PPP(A_\infty e^{-\mu x} dx + B_\infty dx)$ does not have a finite top particle. \\

We infer from the discussion above that for a fixed point $\Pi$ and $\theta \in Dom_{\lambda}(\Pi)$, it may be a priori possible that $M(\theta_t^\lambda)$ converges to a point process which does not have a finite top particle. Nevertheless, we claim that the fixed point $\Pi$ must have a finite top particle almost surely. The heuristic but intuitive reason behind this is as follows. To obtain back $\theta_t^\lambda$ from $M(\theta_t^\lambda)$, one has to recollect the cloud of particles within a bounded distance from $M_t^i$ for each $i\in I$. Let us call these particles \textit{decoration particles} since, informally speaking, in the limit $t\to \infty$, conditional on $M_t^i$ being large, these particles form the decoration point process in the extremal point process, see Section \ref{Section_Prelim} for a more precise statement. The number of decoration particles grows exponentially with time $t >0$. Hence, unless $B_\infty$ appearing in the limit of $M(\theta_t^\lambda)$ is zero, the exponential contributions from these particles would force the limit of $\theta_t^\lambda$, which is $\Pi$, to be locally infinite, which is a contradiction. Also, to counterbalance the exponential contribution from the decoration particles, the atoms of the initial point process $\theta_0^\lambda = \theta$ must be very sparse in $(0,\infty)$. This is quantified using the \textbf{lacunary} point processes as in Definition \ref{def-lacunary}. \\

The proof strategy is similar to the one used in \cite{Shekhar_FP} along with some new ideas. We make an educated guess and divide $\mathbb{R}$ into a certain number of pieces, see \eqref{Def-sets-E},\eqref{Def-sets-F}, and track the contribution of atoms of $\theta$ in each piece to $\theta_t^\lambda$ for large time $t \gg 1$. For $\lambda >\sqrt{2}$, we show that the only non-trivial contribution to $\theta_t^\lambda$ comes from atoms of $\theta$ in an $O(\sqrt{t})$ neighbourhood of $-\sqrt{\lambda^2 -2}t$. Furthermore, we show that the contributions from the rest of the atoms of $\theta$ converge to zero in the limit $t\to \infty$. For the contribution coming from the region $\{x >bt\}$, we use the Brownian thinning $U(\theta_t^\lambda)$ given by \eqref{def-thinning-U} and prove a tightness estimate on $\theta$, see Proposition \ref{crucial-tightness-prop}.
This is an essential piece of new argument since in the region $\{x>bt\}$ we could not rely on estimates for F-KPP solutions (note that Bramson's $\psi$ estimate given by Proposition \ref{Bramson-psi-estimate-for-one-sided-f} does not hold in a neighbourhood of $-\infty$). We get around this issue \textcolor{black}{by use of} the Brownian thinning $U(\theta_t^\lambda)$. The contribution coming from region $[at, bt]$ for $a,b>0$ is handled by proving the lacunary property of $\theta$ as explained in the previous paragraph, see Proposition \ref{lacunary-prop}. The remaining contributions are handled by establishing tightness estimates on $\theta$ obtained via the thinning $M(\theta_t^\lambda)$. We can here rely on the precise estimates on solutions to the F-KPP equation given by Bramson's $\psi$ estimate; see section \ref{Section_Prelim}. The argument using these tightness estimates to prove that the remaining contributions vanish in the limit $t\to \infty$ is similar to that in \cite{Shekhar_FP}: a non-zero contribution at time $t$ would force an unusually large contribution at a larger or smaller time $u$, which would contradict the tightness. The proof for $\lambda = \sqrt{2}$ follows using similar arguments.

\subsection{Organization of the paper.}

The rest of the paper is organized as follows: In Section \ref{Section_Prelim}, we recall some well-known results on BBM as well as some estimates used in later sections.
In Section \ref{Section Properties theta in Don}, we list some properties of a point process when it belongs to the domain of attraction of a fixed point.
We prove Theorem \ref{main-thm} and \ref{main-thm-2} in Section \ref{Section proof supercritical} and \ref{Section proof critical} for $\lambda > \sqrt 2$ and $\lambda=\sqrt 2$ respectively.
We include some relevant discussions on supercritical fixed points and Tauberian conditions in Section \ref{Section Additional remarks}.
We conclude this paper with Appendix \ref{Section Appendix}, where we identify the explicit structure of supercritical fixed points.

\subsection{Notations.}
We write $f \lesssim g$, to mean $f \le C g$ for a fixed constant $C.$
We also use the usual \textit{Little-o} and \textit{Big-O} notation, that is, $a(t)=o(b(t))$ (Resp. $a(t)=O(b(t))$) implies $\frac{a(t)}{b(t)} \to 0$ as $t \to \infty$ (Resp. $\frac{|a(t)|}{|b(t)|} \le C$ for some $C>0$).

\subsection{Acknowledgments.}
S. Zhu would like to thank Professor Zenghu Li for a helpful suggestion concerning the proof of Lemma~\ref{lemma:laplace_supercritical}
in Appendix~\ref{Section Appendix}. CA and SA are supported through grant project no. RTI4014 of the Department of Atomic Energy, Government of India.

\section{Preliminaries} \label{Section_Prelim}

\textcolor{black}{In this section, we introduce the state space of point processes that we work with and review some classical results in probability theory. We then present some basic facts on BBM and the associated F-KPP equation, and collect several estimates for F-KPP solutions. Finally, we introduce the extremal point processes of BBM. The results in this section are mainly extracted from \cite{Bramson78, Bramson83, ABK13, ABBS13, Bovier_book, Shekhar_Domain} and will be used in the proof of the main theorems.}

\subsection{State space and the Laplace functional} \label{Section Laplace}

Let $\mathcal{N}$ be the space of all locally finite point measures (i.e., they are integer-valued) on $\mathbb{R}$,  
 \[\mathcal{N} = \bigl\{ \eta = \sum_{i \in I} \delta_{x_i} \hspace{2mm}|\hspace{2mm} \eta(K) <\infty \hspace{2mm} \textrm{for all compact sets} \hspace{2mm} K\subset \mathbb{R}\bigr\}.\]
$\mathcal{N}$ is equipped with the topology of vague convergence: for $\eta_n, \eta \in \mathcal N$, we say $\eta_n \stackrel{v}{\rightarrow} \eta$ if for all $f \in C_c^+(\mathbb R)$, $\langle f,\eta_n\rangle \to \langle f,\eta\rangle$ as $n\to\infty$.

A point process $\theta$ is a $\mathcal{N}$-valued random variable. It is well known that the law of $\theta$ is characterized by its Laplace functional, i.e., $\theta \stackrel{d}{=} \theta_0$ if and only if $\psi_\theta(f)=\psi_{\theta_0}(f)$ for all $f \in C_c^+(\mathbb R)$, where $\psi_\theta(f)$ is the Laplace functional of $\theta$ defined by 
\begin{align*}
\psi_\theta(f):=\mathbb E[e^{-\langle f, \theta \rangle}].
\end{align*}

In order to utilize the connection between the BBM and F-KPP equations, we will often be working with functions of the following form:
\begin{equation} \label{Form of f}
f(x)= \sum_{k=1}^n a_k 1_{x \ge b_k}, a_k>0, b_k \in \mathbb R.
\end{equation}
\textcolor{black}{We will use $f$ of the form \eqref{Form of f} instead of $f \in C_c^+(\mathbb R)$ as it is sufficient to consider this class of test functions for weak convergence of point processes; see the remark on page 129 in \cite{Bovier_book}.}

Consider the subspace $\mathcal{N}_2 \subset \mathcal{N}$ defined by 
\begin{equation}\label{N_2-def}
     \mathcal{N}_2 := \biggl\{ \eta \in \mathcal{N}\hspace{2mm} \biggl|\hspace{2mm} \int_{-\infty}^{\infty} e^{-\alpha x^2} \eta(dx) <\infty \hspace{2mm} \textrm{for all} \hspace{2mm} \alpha >0\biggr\}.
 \end{equation}

The space $\mathcal N_2$ is invariant under the BBM:

\begin{lemma} [Easy adaptation of Lemma $3.7$-\cite{Shekhar_Domain}]\label{Proposition M_2}
If $\theta_0^\lambda = \theta \in \mathcal N_2$ a.s., then for all $t>0$, $\theta_t^\lambda \in \mathcal N_2$ a.s. In particular, for such $\theta_0^\lambda = \theta$, $\theta_t^\lambda$ is a well-defined point process. Furthermore, if $\theta \in \mathcal{N}_2$ and $\theta([0,\infty)) <\infty$ a.s., then $\theta_t^\lambda([0,\infty)) <\infty$ a.s. for $t>0$. 
\end{lemma}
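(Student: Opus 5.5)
The plan is to show that $\mathbb{E}\bigl[\int e^{-\alpha y^2}\theta_t^\lambda(dy)\,\big|\,\theta\bigr]<\infty$ almost surely for each fixed $\alpha>0$. This gives $\int e^{-\alpha y^2}\theta_t^\lambda(dy)<\infty$ a.s. for that $\alpha$. Taking a countable sequence $\alpha_n\downarrow 0$, and using monotonicity in $\alpha$, then yields $\theta_t^\lambda\in\mathcal N_2$ a.s. Since $e^{-\alpha y^2}$ is bounded below on compacts, local finiteness follows, so $\theta_t^\lambda$ is a well-defined point process.

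The key computation is the many-to-one formula. For a single BBM started at the origin,
\[\mathbb{E}\Bigl[\sum_{k\le n(t)} g(\chi_k(t))\Bigr]=e^{t}\,\mathbb{E}[g(B_t)].\]
Conditioning on $\theta=\sum_i\delta_{x_i}$, this gives
\[\mathbb{E}\Bigl[\int e^{-\alpha y^2}\theta_t^\lambda(dy)\,\Big|\,\theta\Bigr]=e^{t}\sum_{i} \mathbb{E}\bigl[e^{-\alpha(x_i+B_t-\lambda t)^2}\bigr].\]
The Gaussian expectation is explicit:
\[\mathbb{E}\bigl[e^{-\alpha(x+B_t-\lambda t)^2}\bigr]=(1+2\alpha t)^{-1/2}\exp\Bigl(-\frac{\alpha (x-\lambda t)^2}{1+2\alpha t}\Bigr).\]
Set $\beta=\alpha/(1+2\alpha t)>0$. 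Using $(x-\lambda t)^2\ge x^2/2-\lambda^2t^2$, the expectation is bounded by $C_{t,\alpha}\,e^{-\beta x^2/2}$. Hence the conditional expectation is at most $C'_{t,\alpha}\int e^{-(\beta/2)x^2}\theta(dx)$, which is finite because $\theta\in\mathcal N_2$.

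For the second claim, suppose $\theta([0,\infty))<\infty$. Split $\theta=\theta^{+}+\theta^{-}$ according to $x\ge -R$ and $x<-R$, where $R$ is chosen so that all atoms lie below some finite level.

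The part $\theta^+$ has finitely many atoms: there are finitely many atoms in $[-R,\infty)$, since $\theta$ is locally finite and finite on $[0,\infty)$. Each of these BBMs has a finite number of particles at time $t$, so its contribution to $\theta_t^\lambda([0,\infty))$ is finite.

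For $\theta^-$, use many-to-one again:
\[\mathbb{E}\bigl[\theta^{-,\lambda}_t([0,\infty))\,\big|\,\theta\bigr]=e^t\int_{-\infty}^{-R}\mathbb{P}(x+B_t-\lambda t\ge 0)\,\theta(dx).\]
This is bounded by $e^t\int e^{-(x-\lambda t)^2/2t}\theta(dx)\le C_t\int e^{-x^2/(4t)}\theta(dx)$, which is finite by $\mathcal N_2$. So this contribution is finite a.s. as well.

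There is no real obstacle here. The only points needing care are that the constants depend on $t$, which is harmless because $t$ is fixed, and that the conditional-expectation bound is applied on the full-probability event $\{\theta\in\mathcal N_2\}$.
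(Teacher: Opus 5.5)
Your proof is correct. The paper gives no argument of its own, only deferring to Lemma 3.7 of \cite{Shekhar_Domain}, and your conditional first-moment bound (the many-to-one lemma plus the explicit Gaussian integral, and the analogous tail bound for $\theta_t^\lambda([0,\infty))$) is the natural adaptation of that lemma. One cosmetic fix: $R$ should simply be a fixed constant $R\ge 0$ (e.g.\ $R=0$), which guarantees $\lambda t-x\ge 0$ on $\{x<-R\}$ so that $\mathbb{P}(B_t\ge \lambda t-x)\le e^{-(x-\lambda t)^2/2t}$ is legitimately applied; the remark about atoms lying below a finite level plays no role.
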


\subsection{Some classical results in probability theory.}

The following lemma is classical, see [Theorem $2$-Chapter$13$-\cite{Feller-book-Vol2}]. However, notice the nuance that $X_t$ is allowed to take $+\infty$ value.  

\begin{lemma}\label{Feller's lemma}
    Let $X_t,X$ be a family of $[0,\infty]$-valued random variables such that $\mathbb{P}\left(X <\infty\right) = 1$. Assume that for all $\rho>0$,
    \begin{equation}
        \mathbb{E}[e^{-\rho X_t}] \longrightarrow \mathbb{E}[e^{-\rho X}] \quad \text{as} \quad t\to\infty.
    \end{equation}
    Then, 
    \begin{equation}
        \mathbb{P}\left(X_{t} = +\infty\right) \to 0 \quad \text{as} \quad t\to\infty,
    \end{equation} 
    and conditioned on the event $\{X_{t}<\infty \}$, $X_{t}$ converges weakly to $X$. In particular, for all $x \in [0,\infty)$ which is a point of continuity for  $\mathbb{P}(X\leq x)$, $\mathbb{P}\left(X_{t}\le x\right) \to \mathbb{P}\left(X\le x\right)$ as $t\to\infty$.
\end{lemma}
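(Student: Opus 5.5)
The plan is to reduce to the classical continuity theorem for Laplace transforms by splitting off the infinite mass. For each $\rho>0$ write
\[
\mathbb{E}[e^{-\rho X_t}] = \mathbb{E}\bigl[e^{-\rho X_t} 1_{\{X_t<\infty\}}\bigr],
\]
since $e^{-\rho X_t}=0$ on $\{X_t=\infty\}$. The first step is to let $\rho \downarrow 0$ and show that $p_t:=\mathbb{P}(X_t<\infty)\to 1$. Since $e^{-\rho X_t}\le 1_{\{X_t<\infty\}}$, we get $\liminf_t p_t \ge \lim_t \mathbb{E}[e^{-\rho X_t}] = \mathbb{E}[e^{-\rho X}]$ for every $\rho>0$. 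Letting $\rho\downarrow 0$ and using monotone convergence together with $\mathbb{P}(X<\infty)=1$, the right-hand side tends to $1$. Hence $p_t\to 1$, which is the same as $\mathbb{P}(X_t=+\infty)\to 0$.

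Next, for $t$ large enough that $p_t>0$, let $\tilde X_t$ have the law of $X_t$ conditioned on $\{X_t<\infty\}$; this is a genuine $[0,\infty)$-valued random variable. Its Laplace transform is $\mathbb{E}[e^{-\rho \tilde X_t}] = p_t^{-1}\mathbb{E}[e^{-\rho X_t}]$. By the first step this converges to $\mathbb{E}[e^{-\rho X}]$ for every $\rho>0$. The limit is the Laplace transform of the proper random variable $X$, so it tends to $1$ as $\rho\to0$. The continuity theorem for Laplace transforms of measures on $[0,\infty)$ (Feller, Theorem 2, Chapter 13) then gives $\tilde X_t \Rightarrow X$. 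If one prefers to prove this directly, the steps are as follows.
\begin{enumerate}
\item Tightness follows from the standard bound $\mathbb{P}(\tilde X_t>K)\le \frac{1}{1-e^{-1}}\bigl(1-\mathbb{E}[e^{-\tilde X_t/K}]\bigr)$ combined with the convergence at $\rho=1/K$.
\item Subsequential limits are identified by uniqueness of Laplace transforms.
\end{enumerate}

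Finally, at every continuity point $x$ of $\mathbb{P}(X\le x)$ we have
\[
\mathbb{P}(X_t\le x)=p_t\,\mathbb{P}(\tilde X_t\le x)\to 1\cdot \mathbb{P}(X\le x).
\]
The only step needing care is the first one, where the possible mass at $+\infty$ has to be controlled. It is handled by the monotone bound $e^{-\rho X_t}\le 1_{\{X_t<\infty\}}$ together with the hypothesis $\mathbb{P}(X<\infty)=1$. Without that hypothesis the limit transform need not tend to $1$ as $\rho\to0$, and the conclusion would fail.
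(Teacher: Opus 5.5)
Your proof is correct and is essentially the paper's approach. The paper gives no argument beyond citing Feller's continuity theorem for Laplace transforms and noting that $X_t$ may equal $+\infty$; your reduction (first showing $\mathbb{P}(X_t<\infty)\to 1$ from $e^{-\rho X_t}\le 1_{\{X_t<\infty\}}$ and $\rho\downarrow 0$, then applying the continuity theorem to the law of $X_t$ conditioned on $\{X_t<\infty\}$) is exactly the detail that citation leaves implicit.
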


\begin{lemma} [Lemma 3.2 of \cite{Shekhar_FP}]\label{Lemma_bernoulli}
Let $I$ be a countable set and $\{X_i\}_{ i\in I }$ be independent Bernoulli random variables such that $\mathbb E[X_i]=p_i$. Let $X_I=\sum_{i \in I}X_i.$ 
\begin{enumerate}[label=(\alph*)]
    \item If $\mathbb E[X_I]=\sum_{i \in I}p_i$ is finite, then for any $\epsilon \in (0,1)$,
\begin{align*}
\mathbb P(|X_I-\mathbb E[X_I]|\ge \epsilon \mathbb E[X_I])\le \frac{1}{\epsilon^2 \mathbb E[X_I]}.
\end{align*}

\item If $I$ is infinite and $\mathbb E[X_I]=\sum_{i \in I} p_i= \infty$, then  $X_I=\infty$ almost surely.
\end{enumerate}

\end{lemma}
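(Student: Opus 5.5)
For part (a) we use Chebyshev's inequality, once we know that $X_I$ is a genuine square-integrable random variable.

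Since the $X_i$ are non-negative, $X_I=\sum_{i\in I}X_i$ is a well-defined $[0,\infty]$-valued random variable for any enumeration of the countable set $I$. By monotone convergence, $\mathbb E[X_I]=\sum_{i}p_i<\infty$, so $X_I<\infty$ almost surely.

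Next we compute the variance. Enumerate $I=\{i_1,i_2,\dots\}$ and let $S_n=\sum_{k\le n}X_{i_k}$. By independence,
\[
\mathrm{Var}(S_n)=\sum_{k\le n}p_{i_k}(1-p_{i_k})\le \sum_{k\le n}p_{i_k}.
\]
Hence $\mathbb E[S_n^2]=\mathrm{Var}(S_n)+(\mathbb E S_n)^2\le \mathbb E[X_I]+\mathbb E[X_I]^2$. Since $S_n\uparrow X_I$, monotone convergence gives $\mathbb E[X_I^2]=\lim_n \mathbb E[S_n^2]$, and therefore
\[
\mathrm{Var}(X_I)=\lim_n\mathrm{Var}(S_n)=\sum_{i\in I}p_i(1-p_i)\le \mathbb E[X_I].
\]
If $I$ is finite this step is immediate.

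Chebyshev's inequality then gives
\[
\mathbb P\bigl(|X_I-\mathbb E X_I|\ge \epsilon\,\mathbb E X_I\bigr)\le \frac{\mathrm{Var}(X_I)}{\epsilon^2(\mathbb E X_I)^2}\le \frac{1}{\epsilon^2\,\mathbb E[X_I]}.
\]
This assumes $\mathbb E[X_I]>0$. If $\mathbb E[X_I]=0$, the right-hand side is interpreted as $+\infty$ and there is nothing to prove.

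Part (b) is the second Borel--Cantelli lemma. Enumerate $I=\{i_1,i_2,\dots\}$ and let $A_k=\{X_{i_k}=1\}$. These events are independent and satisfy $\sum_k \mathbb P(A_k)=\sum_i p_i=\infty$. Therefore $\mathbb P(A_k \text{ i.o.})=1$, so almost surely infinitely many $X_i$ equal $1$, which means $X_I=\infty$ almost surely.

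For a self-contained argument one can instead verify directly that, for each $n$,
\[
\mathbb P\Bigl(\bigcap_{k\ge n}A_k^c\Bigr)=\prod_{k\ge n}(1-p_{i_k})\le \exp\Bigl(-\sum_{k\ge n}p_{i_k}\Bigr)=0.
\]
Taking the union over $n$ shows that, almost surely, $A_k$ occurs for infinitely many $k$.

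Neither part presents a real obstacle. The only point that needs care is in (a): justifying the variance formula for an infinite sum, which the monotone convergence argument above handles.
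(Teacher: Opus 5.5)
Your proof is correct and takes essentially the same route as the standard argument behind this lemma, which the paper quotes from \cite{Shekhar_FP} without proof: Chebyshev's inequality with $\mathrm{Var}(X_I)=\sum_{i} p_i(1-p_i)\le \mathbb E[X_I]$ for (a), and the second Borel--Cantelli lemma for (b). Your monotone-convergence justification of the variance formula for countably infinite $I$, and your handling of the degenerate case $\mathbb E[X_I]=0$, are careful details that do not change this route.
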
   

\subsection{BBM and F-KPP equation}\label{Section BBM F-KPP} For the binary BBM, recall that the positions of the particles alive at time $t$ are denoted by $\{\chi_k(t), 1 \le k \le n(t)\}$, where $n(t)$ is the number of particles alive at time $t$. The following lemma captures the connection between the BBM and the F-KPP equation. This lemma is due to \cite{McKean} but also appeared in \cite{Skorohod_branching,Ikeda_1,Ikeda_2,Ikeda_3}.

\begin{lemma}[Lemma $5.5$ of \cite{Bovier_book}]\label{BBM-F-KPP-Connection}
For a measurable function $\phi:\mathbb{R} \to [0,1]$, let
\begin{equation}\label{u-def}
u_\phi(t,x) = 1 - \mathbb{E}[\prod_{k=1}^{n(t)}\{1-\phi(x- \chi_k(t))\}].
\end{equation}
Then $u = u_\phi$ solves the following F-KPP equation
\begin{equation}\label{F-KPP}
\partial_t u = \frac{1}{2}\partial_x^2 u + u- u^2
\end{equation}
 with the initial condition $u(0,x) = \phi(x)$.
 \end{lemma}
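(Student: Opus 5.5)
We prove Lemma \ref{BBM-F-KPP-Connection} (McKean's representation) by a first-branching-time decomposition. This gives an integral (mild) equation for $u_\phi$. We then check that its solution is a classical solution of \eqref{F-KPP}.

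It is convenient to work with $v(t,x) := 1-u_\phi(t,x) = \mathbb{E}\bigl[\prod_{k=1}^{n(t)} g(x-\chi_k(t))\bigr]$, where $g = 1-\phi \in [0,1]$. Since $g$ takes values in $[0,1]$, the product lies in $[0,1]$ and $v$ is well defined. The first branching time $\tau$ is exponential with rate $1$ and independent of the initial Brownian path $B$. We split on $\{\tau > t\}$ and $\{\tau \le t\}$. On the second event, the strong Markov property shows that after $\tau$ the system consists of two independent BBMs started from $B_\tau$, each run for time $t-\tau$. Hence
\begin{equation*}
v(t,x) = e^{-t}\,\mathbb{E}[g(x-B_t)] + \int_0^t e^{-s}\,\mathbb{E}\bigl[v(t-s,x-B_s)^2\bigr]\,ds .
\end{equation*}
Writing $P_t$ for the heat semigroup and changing variables $s \mapsto t-s$, this becomes
\begin{equation*}
v(t,\cdot) = e^{-t}P_t g + \int_0^t e^{-(t-s)} P_{t-s}\bigl(v(s,\cdot)^2\bigr)\,ds .
\end{equation*}
This is the mild (Duhamel) form of $\partial_t v = \frac12 \partial_x^2 v - v + v^2$ with $v(0,\cdot)=g$.

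Substituting $v = 1-u$ gives
\begin{equation*}
-\partial_t u = -\tfrac12\partial_x^2 u - (1-u) + (1-u)^2 ,
\end{equation*}
that is, $\partial_t u = \frac12 \partial_x^2 u + u - u^2$, with $u(0,\cdot) = \phi$. This is \eqref{F-KPP}.

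The main obstacle is regularity: the mild equation has to be upgraded to a classical solution, and the initial condition has to be interpreted correctly. For $t>0$, $P_t g$ is smooth. The nonlinear term $v^2$ is bounded by $1$, so the Duhamel integral is $C^{1}$ in $x$ with H\"older-continuous derivative. Standard parabolic bootstrapping then shows that $v$ is $C^{1,2}$ on $(0,\infty)\times\mathbb{R}$ and that differentiating under the integral is justified.

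For merely measurable $\phi$, the initial condition holds in the sense that $u(t,\cdot) \to \phi$ at Lebesgue points and in $L^1_{\mathrm{loc}}$, and pointwise wherever $\phi$ is continuous. This matches the statement of Lemma 5.5 in \cite{Bovier_book}, which we can invoke for these routine analytic details.

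An alternative route avoids the Duhamel regularity argument. We would apply It\^o's formula together with the generator of the branching Markov process to the functional $\prod_k g(x-\chi_k)$, obtaining backward Kolmogorov equations directly for smooth $g$. We would then pass to general measurable $g$ by monotone approximation. We nonetheless favour the renewal argument above, because it is the cleanest.
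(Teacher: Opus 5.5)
Your proposal is correct. It is the standard McKean argument behind Lemma 5.5 of \cite{Bovier_book}: you condition on the first branching time to get the Duhamel form $v(t,\cdot)=e^{-t}P_tg+\int_0^t e^{-(t-s)}P_{t-s}\bigl(v(s,\cdot)^2\bigr)\,ds$, substitute $v=1-u$, and handle regularity by parabolic bootstrapping. The paper gives no proof and simply cites that lemma. One small point: $u_\phi(0,x)=\phi(x)$ holds literally, because $n(0)=1$ and $\chi_1(0)=0$, so your Lebesgue-point convergence as $t\downarrow 0$ is additional information rather than a weakening of the initial condition.
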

 
If $\phi (x) =1_{(-\infty, 0]}(x)$, we write the corresponding solution $u_\phi$ as $u_M$. The relation \eqref{u-def} gives, in particular, the distribution of the maximum particle $M_t$ of BBM: 
 \begin{equation}\label{F-KPP-Formula-For-M}
    u_M(t,x) = \mathbb{P}(M_t \ge x). 
\end{equation}
 
More generally, for $f:\mathbb{R} \to [0,\infty)$, it follows easily from \eqref{u-def} that 

\begin{equation}\label{Laplace-comp-using-F-KPP-Step-1}
     1- u_\phi(t, \lambda t -x) = \mathbb{E}[e^{-\sum_{k=1}^{n(t)} f(x + \chi_k(t) -\lambda t )}],
\end{equation}
where $\phi$ and $f$ are related by $\phi(x) = 1 - e^{-f(-x)}$. This observation leads to the following expression of the Laplace functional of BBM, and as the proof is quite straightforward, it is \textcolor{black}{omitted}.
\begin{lemma}\label{Formula-for-Laplace-of-theta_t}
    Let $\theta$ be a point process and  $\theta_t^\lambda$ as in \eqref{theta_t-def}. Then, for all $f\in C_c^+(\mathbb R)$ and $\phi(x):=1-e^{-f(-x)},$
    \begin{equation}\label{Laplace-comp-using-F-KPP}
\mathbb{E}[e^{-\langle f,\theta_t^\lambda\rangle}] = \mathbb{E}\biggl[\exp\biggl\{\int_{-\infty}^\infty\log(1- u_\phi(t, \lambda t -x))\theta(dx)\biggr\}\biggr].
\end{equation}
\end{lemma}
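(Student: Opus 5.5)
The plan is to condition on $\theta$ and use the independence of the BBMs attached to distinct atoms, so that the Laplace functional factorizes into a product over atoms. Each factor is then identified through \eqref{Laplace-comp-using-F-KPP-Step-1}.

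Fix $f\in C_c^+(\mathbb R)$ and put $\phi(x)=1-e^{-f(-x)}$. Given $\theta=\sum_{i\in I}\delta_{x_i}$, the family $\{\chi^i_k(t)\}_{i\in I}$ consists of independent BBMs, also independent of $\theta$. Because $f\ge 0$, every term in $\langle f,\theta_t^\lambda\rangle=\sum_i\sum_k f(x_i+\chi_k^i(t)-\lambda t)$ is nonnegative. This sum is a well-defined $[0,\infty]$-valued random variable, so no local finiteness of $\theta^\lambda_t$ is needed.

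First, condition on $\theta$. Enumerate the (countably many) atoms as $x_1,x_2,\dots$ and write $Y_i=\sum_k f(x_i+\chi_k^i(t)-\lambda t)$. By monotone convergence, $\sum_{i\le n}Y_i\uparrow\langle f,\theta_t^\lambda\rangle$. Since $e^{-s}$ is bounded and continuous on $[0,\infty]$ (with $e^{-\infty}=0$), dominated convergence gives
\[
\mathbb E[e^{-\langle f,\theta^\lambda_t\rangle}\mid\theta]=\lim_{n\to\infty}\prod_{i\le n}\mathbb E[e^{-Y_i}\mid\theta],
\]
where the product form uses conditional independence. By \eqref{Laplace-comp-using-F-KPP-Step-1}, each factor equals $1-u_\phi(t,\lambda t-x_i)$. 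The limit of the partial products is $\exp\{\sum_i\log(1-u_\phi(t,\lambda t-x_i))\}$, with the convention that it equals $0$ when the series diverges to $-\infty$ or when some factor vanishes. This expression is exactly $\exp\{\int\log(1-u_\phi(t,\lambda t-x))\theta(dx)\}$. Taking expectations over $\theta$ then yields \eqref{Laplace-comp-using-F-KPP}.

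The main point to check is measurability. Rigorously, one realizes the BBMs as an i.i.d. sequence indexed by a measurable enumeration of the atoms of $\theta$. For example, order the atoms by $|x|$ and then by sign, which is possible because $\theta$ is locally finite. The conditional computation is then a Fubini argument on the product space. The other subtlety is the infinite product convention when $\langle f,\theta^\lambda_t\rangle=\infty$, and monotone convergence handles it consistently.
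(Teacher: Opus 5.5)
Your proof is correct and is the argument the paper has in mind. The paper omits the proof as straightforward; the intended route is to condition on $\theta$, factorize over atoms using independence of the attached BBMs, identify each factor via \eqref{Laplace-comp-using-F-KPP-Step-1}, and handle the $[0,\infty]$-valued case through monotone limits of partial products, exactly as you do.
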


Besides the connection of BBM to F-KPP, we will also use the following \textit{many-to-one} lemma, see \cite{Harris_manytofew} for details. 

\begin{lemma}[The many-to-one lemma]\label{Lemma many to one}
Let $F: \mathbb R \to \mathbb R$ be a bounded measurable function. Then, 
    \begin{equation}
        \mathbb E[\sum_{k \le n(t)} F(\chi_k(t))]=e^t \mathbb E[F(B_t)],
    \end{equation}
    where $B$ is a standard one dimensional Brownian motion.
\end{lemma}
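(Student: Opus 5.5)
The many-to-one lemma (Lemma \ref{Lemma many to one}) is proved by a first-moment argument along the genealogical tree. I would first reduce to the case $F = 1_A$ for a Borel set $A$ (and then to nonnegative $F$): both sides are linear in $F$, and bounded measurable functions are uniform limits of simple functions. Since $n(t)$ has finite mean, the left side is continuous under bounded monotone limits, so it suffices to treat indicators.

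The key tool is the spine, or tagged particle, construction from the introduction. At each branching event the tag passes to one of the two children uniformly at random, and $B_t$ denotes the position of the tagged particle at time $t$. Conditional on the tree and the spatial motions, the tagged particle is a particular one of the particles alive at time $t$. Its genealogical line has undergone some number $N_t$ of branchings, where $N_t$ is Poisson with mean $t$ because the tagged line branches at rate $1$. Hence a given particle $k$ is the tagged one with probability $2^{-g_k}$, where $g_k$ is its generation. I would then write
\[
\mathbb{E}\Bigl[\sum_{k\le n(t)} F(\chi_k(t))\Bigr] = \mathbb{E}\Bigl[F(B_t)\, 2^{N_t}\Bigr],
\]
which follows by summing over particles and weighting each by the inverse of its tag probability. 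The position of a particle is a Brownian path along its ancestral line, and branching times are independent of the motion, so $B_t$ is a standard Brownian motion independent of $N_t$. The right side therefore factorizes as $\mathbb{E}[F(B_t)]\,\mathbb{E}[2^{N_t}] = \mathbb{E}[F(B_t)]\, e^{t(2-1)} = e^t\, \mathbb{E}[F(B_t)]$.

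As a cleaner alternative, and as a check, I would set $m(t,A) = \mathbb{E}[\#\{k : \chi_k(t)\in A\}]$. Conditioning on the first branching time $\tau\sim \mathrm{Exp}(1)$ and using the strong Markov property gives the renewal equation
\[
m(t,\cdot) = e^{-t}\,\mathbb{P}(B_t\in\cdot) + \int_0^t e^{-s}\, 2\,\bigl(p_s * m(t-s,\cdot)\bigr)\,ds,
\]
where $p_s$ is the Gaussian kernel. One checks that $e^t p_t$ solves this equation: using $p_s*p_{t-s}=p_t$, the right side becomes $e^{-t}p_t + 2\int_0^t e^{-s}e^{t-s}p_t\,ds = e^{-t}p_t + (e^t - e^{-t})p_t = e^t p_t$. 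Uniqueness among solutions that are locally bounded in $t$ follows from Gronwall's inequality applied to total masses, since $\mathbb{E}[n(t)] = e^t < \infty$.

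I expect the main obstacle to be the rigorous justification of the weighting identity, or equivalently the uniqueness and finiteness in the renewal approach. I would rely on the renewal version, where the a priori bound $\mathbb{E}[n(t)] = e^t$, obtained from the ODE $\frac{d}{dt}\mathbb{E}[n(t)] = \mathbb{E}[n(t)]$, supplies the needed integrability.
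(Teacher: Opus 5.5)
Your proof is correct; the paper gives no argument of its own and defers to \cite{Harris_manytofew}, whose proof rests on your first mechanism: a uniformly chosen spine, compensated by the inverse selection probability (here $2^{N_t}$), followed by $\mathbb{E}[2^{N_t}]=e^{t}$ and the independence of $B_t$ and $N_t$. The weighting identity you flag as the main obstacle is in fact immediate by conditioning on the tree and motions and using Tonelli, and taking $F\equiv 1$ gives $\mathbb{E}[n(t)]=e^t$ for free, so the spine route is more self-contained than your renewal-equation check, which is also valid but needs that a priori bound for its Gronwall uniqueness step.
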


\subsection{Some estimates on solution to F-KPP equation.}

The following result is taken from \cite{Bramson83}.

\begin{lemma}[Proposition 3.4 of \cite{Bramson83}]\label{Bramson-converges-to-1}\footnote{This claim can also be verified in the modern language of extremal point processes $\widetilde{\mathcal{E}}_\infty^\lambda$. Notably, the underlying reason for \eqref{other-main-input-from-Bramson} to hold is that the number of particles of $\widetilde{\mathcal{E}}_\infty^\lambda$ at a bounded distance from $x$ blows up as $x\to -\infty$: it grows like $ e^{-\mu x}$ for $\lambda >\sqrt{2}$ and $\approx (-x)e^{-\sqrt{2}x}$ for $\lambda = \sqrt{2}$.}
    Let $\phi:\mathbb{R}\to [0,1]$ be a measurable function such that $\phi(x) \geq \eta$ for all $x\in (a,b)$ \textcolor{black}{with} some $a,b \in \mathbb{R}$ and $\eta >0$. Let $u_\phi(t,x)$ be the solution to F-KPP equation with initial condition $\phi$. Then, for any $\delta >0$, as $t \to \infty$,
\begin{equation}\label{other-main-input-from-Bramson}
u_\phi\left(t, x\right) \to 1
\end{equation}
uniformly in $x$ such that $|x| \le \sqrt{2}t - \bigl(\frac{3}{2\sqrt{2}} + \delta\bigr)\log t$.
\end{lemma}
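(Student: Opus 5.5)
The plan is to reduce the statement to a counting statement for BBM particles and then prove that counting statement with a truncated second-moment argument followed by a bootstrap.

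\textbf{Reduction.} From \eqref{u-def} and $\phi\ge \eta 1_{(a,b)}$ we get
\[
1-u_\phi(t,x)\le \mathbb{E}\bigl[(1-\eta)^{N_t(x)}\bigr], \qquad N_t(x):=\#\{k\le n(t): \chi_k(t)\in (x-b,x-a)\}.
\]
It therefore suffices to show that $N_t(x)\to\infty$ in probability, uniformly over $|x|\le \sqrt2 t-(\tfrac{3}{2\sqrt2}+\delta)\log t$. BBM is symmetric, so we may assume $x\ge 0$. We may also shrink $(a,b)$ to an interval of fixed length $\ell$.

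\textbf{Step 1: positive probability for a truncated count.} First we show that for each fixed $K$ there is $p>0$ (independent of $t$ and $x$ in the range) with $\mathbb{P}(N_t(x)\ge K)\ge p$. To do this, let $\widetilde N_t(x)$ count only those particles in the target window whose ancestral path $s\mapsto \chi(s)$ stays below the barrier $s\mapsto \frac{s}{t}x+1$ for all $s\le t$. The first moment comes from Lemma \ref{Lemma many to one} and the ballot estimate for the Brownian bridge:
\[
\mathbb{E}[\widetilde N_t(x)]\asymp e^{t}\,\frac{e^{-x^2/2t}}{\sqrt t}\cdot\frac{1}{t}.
\]
At the extreme allowed $x$ this is of order $t^{\sqrt2\delta}\to\infty$. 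For interior $x$ it is even larger: the barrier factor improves to $1$ and the Gaussian factor is exponentially favourable. The second moment comes from the many-to-two formula, splitting according to the branching time $s$ of the two particles. Because of the barrier, the contribution of a split at time $s$ is bounded by $\mathbb{E}[\widetilde N_t(x)]^2$ times a factor summable in $s$ (of order $e^{-c\min(s,t-s)}$ times polynomial factors). This is the standard Bramson/Roberts computation. Hence $\mathbb{E}[\widetilde N^2]\le C\,\mathbb{E}[\widetilde N]^2$, and Paley--Zygmund gives the claim. I expect this second-moment bound to be the main obstacle: it has to hold uniformly up to the edge $x=\sqrt2 t-(\tfrac{3}{2\sqrt2}+\delta)\log t$. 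The $\delta\log t$ margin is exactly what makes the first moment diverge, so that the truncated count reaches any fixed $K$ with positive probability.

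\textbf{Step 2: boosting to probability close to one.} Fix a large time $s_0$. With probability at least $1-\varepsilon$, at time $s_0$ there are at least $J$ particles, all in $[-R,R]$, where $J\to\infty$ as $s_0\to\infty$ and $R=R(s_0)$. Each of these particles starts an independent BBM run for time $t-s_0$. The shifted target $x-\chi_j(s_0)$ still satisfies
\[
|x-\chi_j(s_0)|\le \sqrt2(t-s_0)-\bigl(\tfrac{3}{2\sqrt2}+\tfrac{\delta}{2}\bigr)\log(t-s_0)
\]
once $t$ is large, since $s_0$ and $R$ are fixed. So Step 1 applies to each of them with $\delta/2$ in place of $\delta$. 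The subtrees are independent, so
\[
\mathbb{P}\bigl(N_t(x)<K\bigr)\le \varepsilon+(1-p)^J.
\]
Letting $t\to\infty$, then $s_0\to\infty$, then $\varepsilon\to0$, we get $\sup_x\mathbb{P}(N_t(x)<K)\to0$ for every $K$. Finally,
\[
\mathbb{E}\bigl[(1-\eta)^{N_t(x)}\bigr]\le (1-\eta)^K+\mathbb{P}(N_t(x)<K),
\]
and taking $K\to\infty$ shows $u_\phi(t,x)\to1$ uniformly in the stated range. This recovers Proposition 3.4 of \cite{Bramson83}.
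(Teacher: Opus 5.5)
Your proof is correct, and it takes a different route from the paper, which gives no argument at all. The paper imports the statement as Proposition 3.4 of \cite{Bramson83}. Its footnote only offers the heuristic that $\widetilde{\mathcal E}_\infty^\lambda$ has unboundedly many atoms near $x$ as $x\to-\infty$. That heuristic, read through $\mathcal E_t\to\mathcal E_\infty$, controls windows at a fixed distance below $m(t)$ with $t\to\infty$ first. It does not directly give uniformity over the whole range $|x|\le\sqrt2t-(\frac{3}{2\sqrt2}+\delta)\log t$, which includes windows $\delta\log t$ below $m(t)$ and the bulk near $x=0$.

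Your route is self-contained and yields exactly that uniformity:
\begin{itemize}
\item the reduction $1-u_\phi\le\mathbb E[(1-\eta)^{N_t(x)}]$;
\item a barrier-truncated first and second moment, via many-to-one, many-to-two and the ballot estimate;
\item Paley--Zygmund, giving $\mathbb P(N_t(x)\ge K)\ge p$ uniformly in $x$;
\item the independence bootstrap at a fixed time $s_0$, with $\delta/2$ absorbing $\sqrt2 s_0+R$.
\end{itemize}
The citation buys brevity. Your argument buys a proof from first principles, at the cost of the second-moment computation you only sketch.

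Three details in that sketch need correcting, though none breaks the argument.

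\emph{Barrier placement.} The barrier must lie above the target window. If $a\le-1$, the window $(x-b,x-a)$ reaches above $\frac st x+1$ at $s=t$. This also matters after your reflection for negative $x$. Use, for example, $\frac st(x-a)+1$.

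\emph{Ballot factor.} With a barrier at distance $O(1)$, the ballot factor is $\asymp 1/t$ for every $x$; it does not ``improve to $1$'' for interior $x$. This is harmless: $e^{t-x^2/2t}t^{-3/2}$ decreases in $|x|$, so the minimum of the first moment over the range is the edge value $\asymp t^{\sqrt2\delta}$.

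\emph{Decay of split-time contributions.} Near the edge, the relative contribution of a split at time $s$ is not $e^{-c\min(s,t-s)}$ with a fixed $c$. Write $\beta=x/t$ and $y=\frac st x-z$ with $z\ge-1$. The exponential factor is $e^{-s(1-\beta^2/2)}\approx t^{-\sqrt2(\frac{3}{2\sqrt2}+\delta)s/t}$, which only becomes effective for $s\gtrsim t/\log t$. Summability comes instead from polynomial factors. Relative to $\mathbb E[\widetilde N]^2$, early splits contribute about $s^{-3/2}\int_{-1}^\infty(1+z)^3e^{-\beta z}\,dz$, and late splits about $t^{-\sqrt2\delta}(t-s)^{-3}$. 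Splits with $t-s\le1$ should be bounded directly by $O(\mathbb E[\widetilde N])$ via the Markov property, not by the ballot formula. When $\beta$ is small, $e^{-\beta z}$ gives no decay, but then $e^{-s(1-\beta^2/2)}$ and the Gaussian factor in $z$ do, so the constant is uniform in $x$.

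Since early splits contribute $O(1)$ rather than $o(1)$ to $\mathbb E[\widetilde N^2]/\mathbb E[\widetilde N]^2$, your Step 2 bootstrap is genuinely needed, and you carry it out correctly.
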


The above lemma gives the region of $x$ where $u_\phi$ converges to $1$. On the other hand, it is well known (see e.g. Lemma \ref{when-does-u-go-to-zero}) that for $f$ either compactly supported or of the type \eqref{Form of f}, $u_\phi(t, \sqrt{2}t + x) \to 0$ uniformly over $x\geq - c \log (t)$ as $t \to \infty$, where $c$ is a small enough constant. The following Bramson's $\psi$ function is a powerful tool that allows us to obtain sharp estimates on the solutions of the F-KPP equation in the region where $u_\phi$ converges to zero. 

\begin{proposition}[Proposition 8.3 of \cite{Bramson83}, Proposition 4.3 of \cite{ABK13}]\label{Bramson-psi-estimate-for-one-sided-f}
For a function $f$ of type \eqref{Form of f} and $\phi(x) = 1-e^{-f(-x)}$, let $u_{\phi}$ be the solution to the F-KPP equation with initial condition $\phi$. Define for $z\in \mathbb R$ and $t>r>0$,
\begin{equation}
\psi(r,t,z+\sqrt 2t):=\frac{e^{-\sqrt 2z}}{\sqrt{2\pi(t-r)}}\int_0^\infty u_{\phi}(r,y+\sqrt 2r)e^{\sqrt 2y}e^{-\frac{(y-z)^2}{2(t-r)}}\{1-e^{-2y\frac{z+\frac{3}{2\sqrt 2}\log t}{t-r}}\}dy.
\end{equation}
Then, for all $r$ large enough (depending only on the initial condition $\phi$), $t\geq 8r$ and $z \geq 8r-\frac{3}{2\sqrt 2}\log t$,
\begin{equation}
\gamma_r^{-1} \psi(r,t,z+\sqrt 2t) \le u_{\phi}(t,z+\sqrt 2 t) \le \gamma_r \psi(r,t,z+\sqrt 2t),
\end{equation}
where $\gamma_r \downarrow 1$ as $r\to \infty$.
\end{proposition}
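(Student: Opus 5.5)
This estimate is imported from \cite{Bramson83} (see also \cite{ABK13}), and I would follow Bramson's route: a Feynman--Kac representation of $u_\phi$, together with barrier estimates for Brownian bridges. Write $k(u)=1-u$, so that \eqref{F-KPP} reads $\partial_t u=\frac12\partial_x^2u+k(u)u$. Restarting at time $r$ gives
\[
u_\phi(t,x)=\mathbb{E}_x\Bigl[u_\phi(r,B_{t-r})\exp\Bigl\{\int_0^{t-r}k\bigl(u_\phi(t-s,B_s)\bigr)\,ds\Bigr\}\Bigr].
\]
Next I would reverse time, so that the path runs from $y+\sqrt2 r$ at time $r$ to $z+\sqrt2 t$ at time $t$, condition on the endpoint, and move to the frame $x\mapsto x-\sqrt2 s$. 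The Gaussian density combined with $e^{t-r}$ then produces exactly the prefactor $\frac{e^{-\sqrt2 z}}{\sqrt{2\pi(t-r)}}e^{\sqrt2 y}e^{-(y-z)^2/2(t-r)}$. What remains is an expectation over a Brownian bridge $\mathfrak{b}$ from $y$ to $z$ of $\exp\{-\int u\}$ along the path.

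The factor $1-e^{-2y(z+\frac{3}{2\sqrt2}\log t)/(t-r)}$ is, by the reflection principle, the probability that such a bridge stays above the straight line joining $0$ (at time $r$) to $-\frac{3}{2\sqrt2}\log t$ (at time $t$). The proof therefore reduces to showing that, up to factors $\gamma_r\to1$, the Feynman--Kac weight behaves like the indicator of the bridge staying above a barrier $\mathcal{L}(s)$ close to $\sqrt2 s-\frac{3}{2\sqrt2}\log s$. I would prove the two bounds separately.

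For the upper bound I would show that bridges dipping below a slightly lowered barrier (a concave curve $\mathcal{L}(s)-C\log$-type correction) contribute a vanishing fraction. Below $\sqrt2 s-(\frac{3}{2\sqrt2}+\delta)\log s$, Lemma \ref{Bramson-converges-to-1} gives $u_\phi\to1$, so $k(u)\to0$ and the exponential gain $e^{t-r}$ is lost on the time spent there. I would then use Bramson's comparison between curved and straight barriers for bridges: when the endpoints are at least of order $r$ above the barrier (this is what $y\ge0$ and $z\ge 8r-\frac{3}{2\sqrt2}\log t$ ensure), the non-crossing probabilities differ by a factor $1+o_r(1)$.

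For the lower bound I would restrict to bridges staying above a slightly raised barrier. There one needs the a priori bound $u_\phi(s,\sqrt2 s+x)\lesssim (x+1)e^{-\sqrt2 x}$ for $x\ge -c\log s$, the easy first-moment (many-to-one, Lemma \ref{Lemma many to one}) upper bound. Along such bridges it makes $\int_0^{t-r}u\,ds$ small uniformly once $r$ is large, so $\exp\{-\int u\}\ge\gamma_r^{-1}$. I would also check that the choice $t\ge 8r$ makes the contribution of bridges lingering near time $r$, where $u$ is not yet small, negligible.

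The main obstacle is the barrier comparison: showing that the true, path-dependent killing is sandwiched between straight-line non-crossing probabilities with a multiplicative error tending to $1$ uniformly in $t\ge 8r$ and in $y$ and $z$. This needs Bramson's sharp bridge estimates near concave curves with logarithmic corrections. I would first try to reproduce his Proposition 8.3 argument: iterate the upper and lower comparisons with barriers differing by $O(\log r)$, then use that $\log r/r\to0$ to get $\gamma_r\downarrow1$.
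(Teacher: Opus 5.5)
The paper gives no proof of this statement; it is quoted from \cite{Bramson83} and \cite{ABK13}. Your outline (Feynman--Kac, time reversal to a Brownian bridge in the frame moving at speed $\sqrt2$, comparison of barriers) is Bramson's route, and your bookkeeping is correct. The Gaussian density from $z+\sqrt2 t$ to $y+\sqrt2 r$, times $e^{t-r}$, is exactly the prefactor of $\psi$. The last factor is the probability that the bridge stays above the chord from $0$ to $-\frac{3}{2\sqrt2}\log t$.

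The step that fails is the lower bound as you set it up. Lemma \ref{Lemma many to one} only gives $u_\phi(s,\sqrt2 s+x)\lesssim s^{-1/2}e^{-\sqrt2 x}e^{-x^2/2s}$. At the front $m(s)=\sqrt2 s-\frac{3}{2\sqrt2}\log s$ this is of order $s$, not $O(1)$, so it misses Bramson's logarithmic correction by a full factor $s$. (The bound $(x+1)e^{-\sqrt2 x}$ you display is not what it yields, and it is negative for $x<-1$.) The bridges carrying $\psi$ are pinned at height $z$ at time $t$, and $z$ may be as low as $8r-\frac{3}{2\sqrt2}\log t$. There the first-moment bound only gives $u_\phi\lesssim t\,e^{-8\sqrt2 r}$, which is $\gg1$ once $t\gg e^{8\sqrt2 r}$. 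It stays $\gtrsim1$ as long as the bridge is no more than roughly $\frac1{\sqrt2}\log t-8r$ above that endpoint. So $\int u$ along these bridges is not small uniformly in $t\ge 8r$, and $\exp\{-\int u\}\ge\gamma_r^{-1}$ does not follow. Raising the barrier cannot help, because the endpoint is fixed.

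The missing input is the sharp a priori bound $u_\phi(s,m(s)+w)\le C(1+w)e^{-\sqrt2 w}$ for $w\ge0$. Equivalently, $u_\phi(s,\sqrt2 s+x)\lesssim s^{-3/2}\bigl(x+\frac{3}{2\sqrt2}\log s+1\bigr)e^{-\sqrt2 x}$, which is the shape of Lemma \ref{Lemma upper and lower of u}. With it, $u_\phi=O(re^{-8\sqrt2 r})$ near the endpoint, and $u_\phi$ is summably small along bridges kept a logarithmic distance above the barrier, uniformly in $t$. That is what the lower bound needs. This bound cannot come from the plain many-to-one lemma; there are two ways to get it:
\begin{enumerate}
\item Prove the upper half first and extract it. With $w=z+\frac{3}{2\sqrt2}\log t$, inserting $1-e^{-2yw/(t-r)}\le 2yw/(t-r)$ into $\psi$ gives $u_\phi(t,m(t)+w)\lesssim_r we^{-\sqrt2 w}$ for a fixed $r$.
\item Prove it directly by a first-moment count restricted to particles that stay below a linear barrier (ballot theorem).
\end{enumerate}
So the two halves are not independent, as your plan treats them; the upper bound has to feed the lower one.

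A smaller point: $y\ge0$ does not put the starting point far above the barrier. The $y$-integral reaches down to $y=0$, where curved and straight non-crossing probabilities are not within $1+o_r(1)$ of each other. You also need that the $y$-integrand of $\psi$ puts negligible mass on $y\ll\sqrt r$, which is again a priori information on the profile of $u_\phi(r,\cdot)$.
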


The above proposition has several important consequences: 

\begin{lemma}[Proposition 3.1 of \cite{bovier2014extremal}, Lemma 9.8 of \cite{Bovier_book}]\label{C(a) definition}
Let $f, \phi, u_\phi$ be as in Proposition \ref{Bramson-psi-estimate-for-one-sided-f}. Then, for $\alpha>0$ and $z= \alpha t+o(t)$,
\begin{equation}\label{C(a)}
C(f, \alpha) :=\lim_{t \to \infty} e^{\sqrt 2z} e^{\frac{z^2}{2t}}t^{\frac{1}{2}}u_{\phi}(t,z+\sqrt 2 t)
\end{equation}
exists, and is a strictly positive constant. The convergence in the right-hand side above is uniform for $\alpha$ in compact subsets of $(0,\infty)$. 
\end{lemma}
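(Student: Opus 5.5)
We derive the limit \eqref{C(a)} directly from the two-sided bound of Proposition \ref{Bramson-psi-estimate-for-one-sided-f}. Fix $r$ large, take $t\ge 8r$ and $z=\alpha t+o(t)$, so that $z\ge 8r-\frac{3}{2\sqrt2}\log t$ for $t$ large. Then
\[
\gamma_r^{-1}\psi(r,t,z+\sqrt2 t)\le u_\phi(t,z+\sqrt2t)\le \gamma_r\psi(r,t,z+\sqrt2t).
\]
It therefore suffices to show that, for each fixed large $r$,
\[
L_r:=\lim_{t\to\infty} e^{\sqrt2 z}e^{\frac{z^2}{2t}}t^{1/2}\psi(r,t,z+\sqrt2t)
\]
exists, is finite and positive, and that $L_r$ converges as $r\to\infty$. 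Since $\gamma_r\downarrow1$, the $\limsup$ and $\liminf$ of the quantity in \eqref{C(a)} then both lie in $[\gamma_r^{-1}L_r,\gamma_r L_r]$ for every $r$. Letting $r\to\infty$ squeezes them together, so the limit exists and equals $\lim_r L_r$.

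To compute $L_r$ we expand the integrand. Multiplying $\psi$ by $e^{\sqrt2 z}e^{z^2/2t}t^{1/2}$ gives
\[
\sqrt{\tfrac{t}{2\pi(t-r)}}\int_0^\infty u_\phi(r,y+\sqrt2r)e^{\sqrt2y}\,e^{\frac{z^2}{2t}-\frac{(y-z)^2}{2(t-r)}}\Bigl(1-e^{-2y\frac{z+\frac{3}{2\sqrt2}\log t}{t-r}}\Bigr)dy .
\]
As $t\to\infty$ with $z/t\to\alpha$, we have pointwise in $y$:
\[
\frac{z^2}{2t}-\frac{(y-z)^2}{2(t-r)}\to \alpha y-\frac{\alpha^2 r}{2},\qquad 1-e^{-2y(z+\cdots)/(t-r)}\to 1-e^{-2\alpha y},
\]
and the prefactor tends to $1/\sqrt{2\pi}$. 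The candidate limit is thus
\[
L_r=\frac{e^{-\alpha^2 r/2}}{\sqrt{2\pi}}\int_0^\infty u_\phi(r,y+\sqrt2r)e^{(\sqrt2+\alpha)y}(1-e^{-2\alpha y})\,dy.
\]

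To justify the exchange of limit and integral we need a uniform dominating function for $y\in[0,\infty)$. The exponent is bounded by $\frac{zy}{t-r}+\frac{z^2 r}{2t(t-r)}\le Cy+C$, because $-y^2/2(t-r)\le0$. The remaining factor is bounded by $1$. So we need $u_\phi(r,y+\sqrt2r)e^{Cy}$ to be integrable for each fixed $r$. This holds because $\phi=1-e^{-f(-\cdot)}$ is supported on a half line bounded above, as $f$ is of type \eqref{Form of f}. Hence $u_\phi(r,x)\le \mathbb P(M_r\ge x-b)$ up to a constant, via \eqref{F-KPP-Formula-For-M} and monotonicity, and by the many-to-one lemma \ref{Lemma many to one} this is at most $e^r\mathbb P(B_r\ge x-b)$, which decays like $e^{-x^2/2r}$. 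Dominated convergence then gives $L_r$. Positivity of $L_r$ follows since $u_\phi(r,\cdot)>0$ everywhere for $f\not\equiv0$, by the strong maximum principle or directly from \eqref{u-def}. Finiteness is the Gaussian bound just described.

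The main obstacle is showing that $\lim_r L_r$ exists in $(0,\infty)$. I would avoid this by noting that the sandwich $\gamma_r^{-1}L_r\le\liminf\le\limsup\le\gamma_rL_r$ holds for every large $r$. Since $\gamma_r\to1$, it forces $\limsup/\liminf\le\gamma_r^2\to1$, while both quantities lie in $(0,\infty)$ because any single fixed $r$ gives finite positive bounds. Hence the limit exists and equals $C(f,\alpha)\in(0,\infty)$, with $L_r\to C(f,\alpha)$ as a byproduct. For uniformity in $\alpha\in[\alpha_1,\alpha_2]\subset(0,\infty)$, all the pointwise convergences above are uniform for $z/t$ in a compact set, the domination constant $C$ can be chosen uniformly, and $\gamma_r$ does not depend on $\alpha$. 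So the same squeeze gives uniform convergence. One also checks that $L_r$ depends continuously on $\alpha$, so that $C(f,\alpha)$ is well-defined uniformly over the compact set.
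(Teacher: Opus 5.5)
Your proposal is correct and follows the same route as the cited sources. The paper gives no proof of its own, deferring to Proposition 3.1 of \cite{bovier2014extremal} and Lemma 9.8 of \cite{Bovier_book}, and remarks elsewhere that this result is a direct consequence of Bramson's $\psi$ estimate. That argument is yours: sandwich $u_\phi$ between $\gamma_r^{\pm1}\psi$, compute the normalized limit of $\psi$ by dominated convergence to get $L_r$, and let $r\to\infty$ using $\gamma_r\downarrow 1$. Your ratio argument for existence of the limit, and the Gaussian domination via $u_\phi(r,x)\le e^r\mathbb{P}(B_r\ge x-b)$, are valid ways to fill in the details.
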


\begin{lemma}[Lemma 9.10 of \cite{Bovier_book}] \label{C(a)-def-2}
With $f, \phi, u_\phi,  C(f, \alpha)$ given as in Lemma \ref{C(a) definition}, it holds that, as $t\to \infty$\footnote{The range of integration for \eqref{second-def-C(a)} in [Lemma 9.10-\cite{Bovier_book}] is given to be $(0,\infty)$. However, since $u_{\phi}(t, \sqrt{2}t + x) \to 0$ as $t\to \infty$ pointwise in $x$ (see Lemma \ref{when-does-u-go-to-zero}), one can easily change the range of integration to $(-\infty, \infty)$ by invoking the dominated convergence theorem.},
\begin{equation}\label{second-def-C(a)}
\frac{1}{\sqrt{2\pi}} \int_{-\infty}^\infty u_\phi(t, \sqrt{2}t + z) e^{(\sqrt{2}+ \alpha)z - \alpha^2t /2} dz \to C(f, \alpha).     
\end{equation}
In particular, for $\lambda >\sqrt{2}$, $\mu =\lambda + \sqrt{\lambda^2-2}$, and $\alpha(\lambda) := \mu -\sqrt{2}$, by substituting $ z = (\lambda -\sqrt{2})t - x$, 
\begin{equation}\label{C(a)-specific-to-lambda}
    \frac{1}{\sqrt{2\pi}} \int_{-\infty}^{\infty} u_{\phi}(t, \lambda t -x)e^{-\mu x} dx \to C(f, \alpha(\lambda)). 
\end{equation}
Also, for any fixed $x \in \mathbb{R}$ (see equation (9.50) of \cite{Bovier_book}), 
\begin{equation}\label{shift-relation-for-C(a)}
    C(f(x + \cdot), \alpha(\lambda)) = e^{\mu x}C(f, \alpha(\lambda)).
\end{equation}

\end{lemma}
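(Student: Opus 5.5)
The plan is to reduce \eqref{second-def-C(a)} to the pointwise limit \eqref{C(a)} of Lemma~\ref{C(a) definition} through a Gaussian change of variables, and to control the tails using the upper bound of Proposition~\ref{Bramson-psi-estimate-for-one-sided-f}.

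\textbf{Step 1 (rewriting the weight).} Complete the square:
\[
(\sqrt2+\alpha)z-\tfrac{\alpha^2t}{2}=\sqrt2 z+\tfrac{z^2}{2t}-\tfrac{(z-\alpha t)^2}{2t}.
\]
Setting $g_t(z):=e^{\sqrt2 z}e^{z^2/2t}t^{1/2}u_\phi(t,\sqrt2t+z)$, the left side of \eqref{second-def-C(a)} becomes
\[
\frac{1}{\sqrt{2\pi}}\int g_t(z)\,t^{-1/2}e^{-(z-\alpha t)^2/2t}\,dz .
\]
With the substitution $z=\alpha t+y\sqrt t$ this equals $\int g_t(\alpha t+y\sqrt t)\,\frac{e^{-y^2/2}}{\sqrt{2\pi}}\,dy$.

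\textbf{Step 2 (bulk).} For $|y|\le L$ we have $z=\alpha t+O(\sqrt t)=\alpha t+o(t)$. By Lemma~\ref{C(a) definition}, $g_t\to C(f,\alpha)$ uniformly on this window, since the convergence there is uniform for $z/t$ in compact subsets of $(0,\infty)$. Hence the contribution of $|y|\le L$ converges to $C(f,\alpha)\,\mathbb P(|N|\le L)$, where $N$ is a standard Gaussian.

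\textbf{Step 3 (tails; the main obstacle).} It remains to show that the contribution of $|y|>L$ is small uniformly in large $t$, and then let $L\to\infty$. I would split it into three regions.
\begin{itemize}
\item[(i)] $z\le \epsilon t$ with $\epsilon<\alpha$. Bound $u_\phi\le1$. The weight $e^{(\sqrt2+\alpha)z-\alpha^2t/2}$ integrated over $z\le\epsilon t$ is at most $e^{(\sqrt2+\alpha)\epsilon t-\alpha^2t/2}$, which decays exponentially once $\epsilon$ is small.
\item[(ii)] $\epsilon t\le z$ outside the window $|z-\alpha t|\le L\sqrt t$. Use $u_\phi\le\gamma_r\psi$ with $r$ fixed, which is valid here since $z\ge 8r$. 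Apply $1-e^{-w}\le w$ inside $\psi$, so the correction factor is at most $2y(z+\tfrac{3}{2\sqrt2}\log t)/(t-r)$. Then use $\int_0^\infty u_\phi(r,y+\sqrt2r)\,y\,e^{\sqrt2y}e^{-(y-z)^2/2(t-r)}dy\lesssim e^{z^2/2(t-r)}\cdot(\text{polynomial})$, where the exponential decay of $u_\phi(r,\cdot+\sqrt2 r)$ at $+\infty$ is at least Gaussian. This gives $g_t(z)\lesssim 1+z/t$, at most polynomial in $z/t$, uniformly for $z\ge\epsilon t$.
\item[(iii)] Integrate the bound from (ii) against $t^{-1/2}e^{-(z-\alpha t)^2/2t}$ over $|z-\alpha t|>L\sqrt t$. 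This gives $O(\mathbb E[(1+|N|)1_{|N|>L}])\to0$ as $L\to\infty$.
\end{itemize}
The delicate point is the uniform-in-$t$ bound in (ii), specifically the bookkeeping of the factors $e^{z^2/2t}$ against $e^{z^2/2(t-r)}$ for $z$ of order $t$. These factors differ by $e^{O(z^2r/t^2)}=e^{O(1)}$, which is harmless.

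\textbf{Step 4 (the $\lambda$-specific form \eqref{C(a)-specific-to-lambda}).} Put $\alpha=\mu-\sqrt2$; this is positive because $\mu>\lambda\ge\sqrt2$. Substitute $z=(\lambda-\sqrt2)t-x$, so that $\sqrt2t+z=\lambda t-x$. The exponent becomes
\[
-\mu x+t\bigl[\mu(\lambda-\sqrt2)-\tfrac{(\mu-\sqrt2)^2}{2}\bigr]=-\mu x+t\bigl[\mu\lambda-\tfrac{\mu^2}{2}-1\bigr].
\]
The bracket vanishes because $\mu^2-2\lambda\mu+2=0$, which gives \eqref{C(a)-specific-to-lambda}.

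\textbf{Step 5 (shift relation \eqref{shift-relation-for-C(a)}).} Let $\tilde f=f(x+\cdot)$. Its initial datum is $\tilde\phi(y)=1-e^{-f(x-y)}=\phi(y-x)$, so by translation invariance of the F-KPP equation $u_{\tilde\phi}(t,\cdot)=u_\phi(t,\cdot-x)$. In the integral \eqref{C(a)-specific-to-lambda} for $\tilde f$, substitute $x'=x''+x$. The weight $e^{-\mu x'}$ then produces the factor $e^{-\mu x}$ or $e^{\mu x}$, depending on the sign convention. I would check this sign directly against \eqref{Laplace-comp-using-F-KPP-Step-1}: shifting the test function to $f(x+\cdot)$ is equivalent to moving the starting particle up by $x$, so the expected effect is multiplication by $e^{\mu x}$. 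Finally, pass to the limit using Step 4.
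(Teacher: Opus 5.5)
\paragraph{Comparison with the paper.} The paper gives no argument of its own for this lemma. It imports the $(0,\infty)$ version of \eqref{second-def-C(a)} from [Lemma 9.10 of \cite{Bovier_book}] and extends the range of integration to $\mathbb R$ by the dominated-convergence remark in the footnote. It then obtains \eqref{C(a)-specific-to-lambda} by the same substitution as your Step 4, and quotes (9.50) of \cite{Bovier_book} for \eqref{shift-relation-for-C(a)}. You instead derive the integrated identity from the pointwise asymptotics \eqref{C(a)}. After completing the square, the left side of \eqref{second-def-C(a)} is exactly the Gaussian average $\mathbb E[g_t(\alpha t+\sqrt t\,N)]$ with $N$ standard normal, so the claim becomes a dominated-convergence statement. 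This is a sound and more transparent reduction, and it uses only Lemma \ref{C(a) definition}, which the paper also cites. Your region (i) disposes of $z\le\epsilon t$ with just $u_\phi\le 1$; this includes the negative half-line that the footnote is about. Steps 2 and 4 are correct.

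\paragraph{The gap in Step 3(ii).} The claimed bound $g_t(z)\lesssim 1+z/t$ uniformly in $z\ge\epsilon t$ is false in general. Let $b:=\min_k b_k$. For large $z/t$ the first-moment bound is sharp up to constants, and $g_t(z)$ is of order $(t/z)e^{-bz/t}$, which is exponentially large in $z/t$ when $b<0$.

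Your bookkeeping is only valid for $z=O(t)$. Write
$e^{z^2/2t-(y-z)^2/2(t-r)}=e^{-z^2r/(2t(t-r))}\,e^{\beta y-y^2/2(t-r)}$ with $\beta=z/(t-r)$, where $y$ is the integration variable of $\psi$.
\begin{itemize}
\item The $y$-integral against $u_\phi(r,y+\sqrt2 r)e^{\sqrt2 y}$ is of order $e^{r\beta^2/2}$ up to lower-order factors, not polynomial.
\item The prefactor $e^{-z^2r/(2t(t-r))}$ is not $e^{O(1)}$ once $z\gg t$. It is a decaying factor that must be kept, because it cancels that growth.
\end{itemize}
What survives is $g_t(z)\lesssim e^{|b|z/t}$. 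This still suffices: at $z=\alpha t+\sqrt t\,N$ and $t\ge 1$ one has $e^{|b|z/t}\le e^{|b|(\alpha+|N|)}$, and $\mathbb E[e^{|b||N|}]<\infty$. So in (iii) replace $1+|N|$ by $e^{|b||N|}$.

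\paragraph{A simpler repair.} Bramson's $\psi$ is not needed for the tails at all. Use \eqref{upper-bound-for-F-KPP-using-many-to-one} with $f\le(\sum_k a_k)1_{[b,\infty)}$ and the Mills-ratio bound to get $u_\phi(t,\sqrt2 t+z)\le(\sum_k a_k)e^t\,\mathbb P(B_t\ge\sqrt2 t+z+b)$. This gives $g_t(z)\lesssim e^{|b|z/t}$ uniformly in $z\ge 0$ for $t$ large. Dominated convergence on $\{z\ge 0\}$, together with your region (i) for $z<0$, then finishes the proof without the split in $L$.

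\paragraph{Step 5.} The substitution should be $x'=x''-x$. Then $u_{\tilde\phi}(t,\lambda t-x')=u_\phi(t,\lambda t-x'')$ and $e^{-\mu x'}=e^{\mu x}e^{-\mu x''}$, so the factor is $e^{\mu x}$ with no sign ambiguity.
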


If $f$ is such that $\phi (x) = 1-e^{-f(-x)} =  1_{(-\infty, 0]}(x)$, motivated by the relation \eqref{F-KPP-Formula-For-M}, we write

\begin{equation} \label{rename-C(f)-for-M-for-supercritical}
    C(f, \alpha(\lambda)) = C_M(\alpha(\lambda)).
\end{equation}

The following quantitative upper and lower bounds hold as well.

\begin{lemma} \label{lemma_upper and lower for F-KPP}
\begin{enumerate}[label= (\alph*)]
    \item Let $f, \phi, u_\phi$ be as in Proposition \ref{Bramson-psi-estimate-for-one-sided-f} and $0<\alpha<\beta<\infty$. Then, there exist $t_0\ge 1, C_1, C_2 >0$ such that for all $t \ge t_0$ and $z \in [\alpha t, \beta t]$,
    \begin{equation}\label{quant-bound-for-super-critical}
        C_1\frac{e^{-\sqrt 2 z}}{\sqrt t}e^{-z^2/2t} \le u_\phi(t,z+\sqrt 2t)\le C_2\frac{e^{-\sqrt 2 z}}{\sqrt t}e^{-z^2/2t}.
    \end{equation}
The upper bound in the above holds for $\phi$ with $f\in C_c^+(\mathbb{R})$ as well. 
    \item Let $0< \alpha < \beta < \infty$. Then, there exists $t_0 \geq 1$, $K>0$ large enough and constants $C_1, C_2 >0$ such that for all $t\geq t_0$ and $z\in [\alpha t, \beta t]$,
    \begin{equation}\label{quant-bound-for-super-critical-for-difference}
        C_1\frac{e^{-\sqrt 2 z}}{\sqrt t}e^{-z^2/2t} \le u_M(t, \sqrt{2}t + z - K ) - u_M(t, \sqrt{2}t + z + K )\le C_2\frac{e^{-\sqrt 2 z}}{\sqrt t}e^{-z^2/2t}.
    \end{equation}
\end{enumerate}
    
\end{lemma}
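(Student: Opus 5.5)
Both parts will be read off Bramson's $\psi$ estimate, Proposition \ref{Bramson-psi-estimate-for-one-sided-f}, with $r$ fixed large and $t\to\infty$ along $z\in[\alpha t,\beta t]$. Then $z\ge 8r-\frac{3}{2\sqrt2}\log t$ and $t\ge 8r$ hold automatically for $t\ge t_0$, so
\[ u_\phi(t,z+\sqrt2 t)\asymp \psi(r,t,z+\sqrt2t), \]
and it suffices to bound $\psi$ above and below by constant multiples of $t^{-1/2}e^{-\sqrt2 z}e^{-z^2/2t}$.

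For part (a), the upper bound comes from bounding the integrand: $1-e^{-2y(z+\frac{3}{2\sqrt2}\log t)/(t-r)}\le 1$, and $e^{-(y-z)^2/2(t-r)}=e^{-z^2/2(t-r)}e^{yz/(t-r)}e^{-y^2/2(t-r)}\le e^{-z^2/2t}e^{\beta' y}$ for a constant $\beta'$, since $z^2/2(t-r)\ge z^2/2t$ and $z/(t-r)\le 2\beta$. This reduces the upper bound to showing
\[ \int_0^\infty u_\phi(r,y+\sqrt2r)e^{(\sqrt2+2\beta)y}dy<\infty. \]
This holds because for $f$ of type \eqref{Form of f}, $\phi$ vanishes on $(-\min b_k,\infty)$, so $u_\phi(r,\cdot)\le u_M(r,\cdot-c)\le e^r\mathbb P(B_r\ge \cdot -c)$ has Gaussian tails.

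For the lower bound in (a), I restrict the integral to $y\in[1,2]$. There $u_\phi(r,y+\sqrt2 r)>0$ with a positive lower bound, because $\phi$ is positive on a half-line and the heat kernel is positive. The Gaussian factor satisfies $e^{-(y-z)^2/2(t-r)}\ge c\,e^{-z^2/2t}$, since $z^2(1/(t-r)-1/t)=O(1)$ for $z\le\beta t$. The last factor satisfies $1-e^{-2y(z+\dots)/(t-r)}\ge 1-e^{-2\alpha}$. Together these give $\psi\ge c\,t^{-1/2}e^{-\sqrt2z}e^{-z^2/2t}$.

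For $f\in C_c^+(\mathbb R)$, the upper bound follows by comparison. Writing $1-\phi=e^{-f(-\cdot)}$, pick $\tilde f=a1_{x\ge b}\ge f$ with $a=\|f\|_\infty$ and $b$ below the support of $f$. Then $\tilde\phi\ge\phi$, and monotonicity of $u_\phi$ in $\phi$ (clear from \eqref{u-def}) gives $u_\phi\le u_{\tilde\phi}$, so the upper bound for $\tilde f$ applies.

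For part (b), write $\Delta_K(t,z):=u_M(t,\sqrt2t+z-K)-u_M(t,\sqrt2t+z+K)$. The upper bound is immediate from (a) applied to $u_M$ at $z\mp K$, since $e^{\mp\sqrt2K}$ and the change in $e^{-(z\mp K)^2/2t}$ are bounded factors. For the lower bound, Lemma \ref{C(a) definition} gives
\[ u_M(t,\sqrt2t+z')\sim C_M(z/t)\,t^{-1/2}e^{-\sqrt2z'}e^{-z'^2/2t} \]
uniformly for $z/t$ in $[\alpha,\beta]$. Plugging in $z'=z\mp K$ and using $e^{\pm Kz/t}$ gives
\[ \Delta_K\sim C_M(z/t)\,t^{-1/2}e^{-\sqrt2z-z^2/2t}\bigl(e^{\sqrt2K+Kz/t}-e^{-\sqrt2K-Kz/t}\bigr). \]
The bracket is at least $e^{\sqrt2K}-e^{-\sqrt2K}>0$. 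Since $C_M(\cdot)$ is strictly positive and continuous on $[\alpha,\beta]$ (uniform convergence of positive limits; alternatively use (a)'s lower bound for $C_M$), the lower bound follows for $t\ge t_0$. Any $K>0$ works; taking $K$ large only makes the constant cleaner.

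The main obstacle is the lower bound in (b): the difference of two comparable quantities could cancel. The plan handles this through the uniform asymptotics of Lemma \ref{C(a) definition}, which turn the difference into an explicit factor. If the uniformity were too weak, I would instead apply the $\psi$ estimate with $\gamma_r$ close to $1$ to both terms. Then $\gamma_r^{-1}\psi(z-K)-\gamma_r\psi(z+K)$ is positive once $\gamma_r^2<e^{2\sqrt2K}$, which is where choosing $K$ large enough is useful.
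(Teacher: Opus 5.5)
Your proof is correct. Part (a), its extension to $f\in C_c^+(\mathbb R)$ by comparison with $\tilde f=a1_{x\ge b}$, and the upper bound in (b) follow the paper. The paper only says (a) is a direct consequence of Proposition \ref{Bramson-psi-estimate-for-one-sided-f} and leaves the details to the reader. Your integrand bounds supply exactly those details: the factor $e^{(\sqrt2+2\beta)y}$ is controlled by the Gaussian tail of $u_\phi(r,\cdot)$, and for the lower bound you restrict to $y\in[1,2]$.

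The lower bound in (b) is where you take a genuinely different route. The paper uses only the two-sided bounds of (a). They give $u_M(t,\sqrt2t+z-K)/u_M(t,\sqrt2t+z+K)\ge (C_1/C_2)\,e^{2\sqrt2K+2zK/t}$. Since the ratio $C_1/C_2$ is uncontrolled, $K$ must be taken large to make this ratio at least $2$. The difference is then at least $u_M(t,\sqrt2t+z+K)$, which (a) bounds below.

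You instead use the sharper uniform asymptotics of Lemma \ref{C(a) definition}, together with positivity and continuity of $\alpha\mapsto C_M(\alpha)$ on compacts. Both are correctly justified: $C_M$ is a uniform limit of continuous functions, and (a) bounds it below by $C_1$. The leading constants then cancel up to $o(1)$, and the difference reduces to the explicit factor $e^{\sqrt2K+Kz/t}-e^{-\sqrt2K-Kz/t}\ge 2\sinh(\sqrt2K)>0$.

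Your route needs a stronger input than (a), but it gives the estimate for every fixed $K>0$. This is exactly what the paper's footnote says the authors expected but could verify only for large $K$. Your argument covers the supercritical range $z\in[\alpha t,\beta t]$, though not directly Lemma \ref{Lemma-on-difference-U_M-for-critical}. Your fallback also works for every $K>0$, not just large $K$: for fixed $K$, choose $r=r(K)$ with $\gamma_r^2<e^{2\sqrt2K}$.
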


\begin{proof}
The proof of part-$(a)$ follows easily by the same arguments as the proof of [Proposition $3.1$-\cite{bovier2014extremal}], which again is a direct consequence of Bramson's $\psi$ estimate \eqref{Bramson-psi-estimate-for-one-sided-f}. We leave the details to the reader. The upper bound \textcolor{black}{for $\phi$ with} $f\in C_c^+(\mathbb{R})$ follows easily by writing $f(x) \leq C 1_{[b,\infty)}(x)=: \widetilde{f}(x)$ for some $b\in \mathbb{R}$ and applying comparison principle $u_\phi \leq u_{\widetilde{\phi}}$ with $\widetilde{\phi}(x) =1-e^{-\widetilde{f}(-x)}$. \\
For part-$(b)$, the upper bound is immediate from \eqref{quant-bound-for-super-critical}. For the lower bound, we claim that for $K>0$ and $t>0$ large enough, 
     \begin{equation}\label{Eqn_ratio of two F-KPP}
         \frac{u_M(t,\sqrt 2t+z-K)}{u_M(t,\sqrt 2t+z+K)} \ge 2
     \end{equation}
for all $z\in [\alpha t, \beta t]$. Using \eqref{quant-bound-for-super-critical}, this implies that
     \begin{align*}
          u_M(t,\sqrt 2t+z-K)-u_M(t,\sqrt 2t+z+K) &\ge u_M(t,\sqrt 2t+z+K) \\
          & \gtrsim \frac{e^{-\sqrt 2 z}}{\sqrt t}e^{-z^2/2t}.
     \end{align*}
     It remains to verify \eqref{Eqn_ratio of two F-KPP}.
     To this end, using \eqref{quant-bound-for-super-critical} again, we obtain that
     \begin{align*}
          \frac{u_M(t,\sqrt 2t+z-K)}{u_M(t,\sqrt 2t+z + K)} &\gtrsim \frac{e^{-\sqrt 2(z-K)}e^{-\frac{(z-K)^2}{2t}}}{e^{-\sqrt 2(z+K)}e^{-\frac{(z+K)^2}{2t}}} \\
          & \gtrsim e^{2\sqrt 2K+\frac{2zK}{t}} \\& 
          \gtrsim e^{2\sqrt 2K}.
     \end{align*}
     Hence, \eqref{Eqn_ratio of two F-KPP} follows by choosing $K$ large enough. 
\end{proof}

Similarly as \eqref{C(a)-specific-to-lambda} which holds for $\lambda >\sqrt{2}$, we have the following lemma for $\lambda = \sqrt{2}$:

\begin{lemma}[Proposition 7.9 and Lemma 7.5 of \cite{Bovier_book}] \label{C-for-critical-def}
For $f$ of the form \eqref{Form of f}, 
\begin{equation}\label{constant-f}
\sqrt{\frac{2}{\pi}}\int_0^\infty u_\phi(t, x+\sqrt{2}t)xe^{\sqrt{2} x} dx \to \Cb(f)
\end{equation}
as $t\to \infty$, where $\Cb(f)$ is a positive constant. Furthermore, for any fixed $x \in \mathbb{R}$, 
\begin{equation}\label{C-shift}
\Cb(f(x+ \cdot)) = e^{\sqrt{2}x}\Cb(f).
\end{equation} 
\end{lemma}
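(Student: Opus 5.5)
The plan is to derive both claims from Bramson's $\psi$ estimate, Proposition \ref{Bramson-psi-estimate-for-one-sided-f}. The idea is to fix a large intermediate time $r$, compute the weighted integral of $\psi$ explicitly as $t\to\infty$, and then let $r\to\infty$ using $\gamma_r\downarrow 1$.

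Fix $r$ large, as in Proposition \ref{Bramson-psi-estimate-for-one-sided-f}, and write
\[
I_t:=\sqrt{\tfrac{2}{\pi}}\int_0^\infty u_\phi(t,x+\sqrt 2t)\,x e^{\sqrt2 x}\,dx.
\]
I split the range of integration into $[0,8r]$ and $[8r,\infty)$.

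\emph{The bounded piece $[0,8r]$.} Here the integrand is bounded by $8r e^{8\sqrt2 r}$, and $u_\phi(t,x+\sqrt2 t)\to0$ pointwise by Lemma \ref{when-does-u-go-to-zero}. Dominated convergence therefore makes this piece $o(1)$.

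\emph{The main piece $[8r,\infty)$.} Proposition \ref{Bramson-psi-estimate-for-one-sided-f} applies here, so $I_t$ is sandwiched between $\gamma_r^{\mp1}$ times
\[
J_t(r)=\sqrt{\tfrac{2}{\pi}}\int_0^\infty u_\phi(r,y+\sqrt2 r)e^{\sqrt2 y}\,\biggl[\int_{8r}^\infty \frac{x}{\sqrt{2\pi(t-r)}}e^{-\frac{(y-x)^2}{2(t-r)}}\Bigl(1-e^{-2y\frac{x+\frac{3}{2\sqrt2}\log t}{t-r}}\Bigr)dx\biggr]dy .
\]
In $\psi$ the factor $e^{-\sqrt2 z}$ cancels the weight $e^{\sqrt2 x}$, which is why only the Gaussian kernel remains. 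I evaluate the inner integral with $s=t-r$, for fixed $y$. The relevant $x$ are of order $\sqrt s$, where
\[
1-e^{-2y(x+O(\log t))/s}=\frac{2yx}{s}\bigl(1+o(1)\bigr).
\]
Moreover, $e^{-(y-x)^2/2s}\to e^{-x^2/2s}$ uniformly on this scale, and the lower limit $8r$ is negligible. Since $\int_0^\infty x^2 e^{-x^2/2s}dx/\sqrt{2\pi s}=s/2$, the inner integral converges to $y$.

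To pass this limit under the $dy$-integral I need a dominating function. Using $1-e^{-a}\le a$ (for $a\ge0$) and the Gaussian moments, the inner integral is $\lesssim y(1+y)$ uniformly in $t\geq 8r$. The function $u_\phi(r,y+\sqrt2r)(1+y)^2e^{\sqrt2y}$ is integrable: by the many-to-one Lemma \ref{Lemma many to one}, $u_\phi(r,\cdot)$ has a Gaussian tail $\lesssim e^{r}\,\mathbb P(B_r\ge y+\sqrt2 r-b)$. Dominated convergence then gives
\[
J_t(r)\to \mathbf I(r):=\sqrt{\tfrac{2}{\pi}}\int_0^\infty u_\phi(r,y+\sqrt2r)\,y e^{\sqrt2y}\,dy\in(0,\infty).
\]
This yields
\[
\gamma_r^{-1}\mathbf I(r)\le\liminf_t I_t\le\limsup_t I_t\le\gamma_r\mathbf I(r).
\]
Taking $r\to\infty$ and using $\gamma_r\downarrow1$, the ratio $\limsup_t I_t/\liminf_t I_t$ is at most $\gamma_r^2\to1$. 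Since the bounds are finite and positive for a single fixed $r$, the limit $\Cb(f)$ exists and lies in $(0,\infty)$.

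\emph{The shift relation \eqref{C-shift}.} Replacing $f$ by $f(x+\cdot)$ changes $\phi$ into $\phi(\cdot-x)$, and by uniqueness of the F-KPP solution the solution becomes $u_\phi(t,\cdot-x)$. The substitution $y=z-x$ then gives
\[
e^{\sqrt2x}\int_{-x}^\infty u_\phi(t,y+\sqrt2t)(y+x)e^{\sqrt2y}\,dy .
\]
The boundary strip between $-x$ and $0$ (bounded integrand) vanishes as $t\to\infty$, exactly as before. The extra term $x\int u_\phi e^{\sqrt2y}dy$ also vanishes. Indeed, repeating the $\psi$ computation without the factor $x$, the inner Gaussian integral is $O(y/\sqrt{s})\to0$. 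The remaining term converges to $e^{\sqrt2x}\Cb(f)$.

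The main obstacle is the uniform domination in the main piece, i.e.\ making sure that neither the $\log t$ correction in $\psi$ nor the large-$y$ and large-$x$ tails spoil the Gaussian moment calculation. I would handle this with the crude bound $1-e^{-a}\le a$ together with the Gaussian tail of $u_\phi(r,\cdot)$ at fixed $r$.
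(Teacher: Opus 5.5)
Your proof is correct. The paper gives no proof of this lemma; it quotes it from Bovier's book, so there is no in-text argument to compare against. Your derivation from Proposition~\ref{Bramson-psi-estimate-for-one-sided-f} is sound:
\begin{itemize}
\item the bounded piece on $[0,8r]$ is handled by Lemma~\ref{when-does-u-go-to-zero};
\item the inner Gaussian integral does converge to $y$;
\item the final step, where $\limsup_t I_t/\liminf_t I_t\le\gamma_r^2$ with both bounds finite and positive for one fixed $r$, gives existence and positivity;
\item the shift relation \eqref{C-shift} follows correctly from translation invariance of $u_\phi$, vanishing of the boundary strip, and vanishing of the term $x\int_0^\infty u_\phi(t,w+\sqrt2 t)e^{\sqrt2 w}\,dw$.
\end{itemize}
The argument closes so cleanly because your bounding quantity $\mathbf{I}(r)$ is the same functional as $I_t$, evaluated at time $r$. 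The sandwich therefore reads $\gamma_r^{-1}I_r\le\liminf_t I_t\le\limsup_t I_t\le\gamma_r I_r$. No external input is needed, such as convergence of $u_\phi(t,\cdot+m(t))$ to a travelling wave.

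One small correction in the domination step. Let $p_s$ be the centred Gaussian density with variance $s=t-r$. Using only $1-e^{-a}\le a$, you bound the inner integral by $\frac{2y}{s}\int_0^\infty x\bigl(x+\frac{3}{2\sqrt2}\log t\bigr)\,p_s(x-y)\,dx$. Since $\int_0^\infty x^2p_s(x-y)\,dx\le y^2+s$ and $s$ is only bounded below by $7r$, the $y^3/s$ term is not absorbed. The resulting uniform bound is of order $y(1+y)^2$, not $y(1+y)$. If you use $1-e^{-a}\le\min(1,a)$ and split according to $y\ge\sqrt s$ or $y<\sqrt s$, you get a uniform bound $\lesssim y$. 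Either dominating function is integrable against $u_\phi(r,y+\sqrt2 r)e^{\sqrt2 y}$, by the Gaussian tail of $u_\phi(r,\cdot)$ at fixed $r$. So the conclusion is unaffected.
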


If $f$ is such that $\phi (x) = 1-e^{-f(-x)} =  1_{(-\infty, 0]}(x)$, motivated by the relation \eqref{F-KPP-Formula-For-M}, we write

\begin{equation} \label{rename-C(f)-for-M}
    \Cb(f) = \Cb_M.
\end{equation}

\vspace{3mm}

\begin{lemma}[Lemma 4.7 of \cite{ABK13}]\label{Lemma upper and lower of u}
There exists a constant $C_1>0$ such that for all $t$ large enough and  $z\geq -\frac{1}{2} \log t $, 
\begin{equation}\label{upper-bound-for-critical-U-M-for-x>-log t}
u_M(t,z+\sqrt 2t) \le C_1 t^{-\frac{3}{2}}(z+\log t)e^{-\sqrt 2 z} e^{-\frac{z^2}{2t}}. 
\end{equation}
\end{lemma}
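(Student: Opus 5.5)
This bound is cited from ABK13; to reprove it I would follow the standard route through Bramson's $\psi$ estimate, Proposition \ref{Bramson-psi-estimate-for-one-sided-f}. The function $\phi=1_{(-\infty,0]}$ corresponds to $f=\infty\cdot 1_{[0,\infty)}$. This is not literally of the form \eqref{Form of f}. However, Bramson's estimate in \cite{Bramson83} applies to Heaviside initial data directly, and alternatively one can approximate by $f=a1_{x\ge 0}$ with $a$ large, using monotonicity of $u_\phi$ in $\phi$.

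The first step is to fix $r$ large but constant, so that $\gamma_r\le 2$, and write $u_M(t,z+\sqrt2 t)\le 2\psi(r,t,z+\sqrt2 t)$ for $z\ge 8r-\frac{3}{2\sqrt2}\log t$. The range $z\ge -\frac12\log t$ is contained in this region once $t$ is large, because $\frac12<\frac{3}{2\sqrt2}$.

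The second step bounds $\psi$. I use $1-e^{-w}\le w$ with $w=2y(z+\frac{3}{2\sqrt2}\log t)/(t-r)$, which is nonnegative in our range, and the trivial bound $e^{-(y-z)^2/2(t-r)}\le e^{-z^2/2(t-r)}e^{yz/(t-r)}$. Since $t-r\asymp t$, this gives
\[\psi\lesssim \frac{e^{-\sqrt2 z}e^{-z^2/2t}}{t^{3/2}}\Bigl(z+\tfrac{3}{2\sqrt2}\log t\Bigr)\int_0^\infty u_M(r,y+\sqrt2 r)\,y\,e^{\sqrt2 y}e^{yz/(t-r)}\,dy .\]
Replacing $e^{-z^2/2(t-r)}$ by $e^{-z^2/2t}$ costs only $e^{-z^2 r/(2t(t-r))}\le 1$.

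The main obstacle is the cross term $e^{yz/(t-r)}$, which is not bounded uniformly when $z$ is of order $t$. Here I use that, for fixed $r$, the tail $u_M(r,y+\sqrt2 r)\le P(\text{some particle exceeds }y+\sqrt2 r)\le e^{r}P(B_r\ge y+\sqrt2 r)$ decays like $e^{-y^2/2r}$ by the many-to-one Lemma \ref{Lemma many to one}. Hence $\int_0^\infty u_M(r,\cdot)\,y\,e^{(\sqrt2+c)y}\,dy<\infty$ for every $c$, so the integral is bounded uniformly over $z\le t$.

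For $z>t$, I would avoid $\psi$ altogether and use the crude first-moment bound $u_M(t,z+\sqrt2t)\le e^tP(B_t\ge z+\sqrt2 t)\lesssim e^{-\sqrt2 z-z^2/2t}/\sqrt t\cdot \frac{\sqrt t}{z}$. This is at most $t^{-3/2}(z+\log t)e^{-\sqrt2 z-z^2/2t}$ when $z\ge t$, since then $\frac{1}{z}\le t^{-1}\le t^{-3/2}(z+\log t)$.

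Finally, the factor $z+\frac{3}{2\sqrt2}\log t$ is at most a constant times $z+\log t$ when $z\ge-\frac12\log t$: write $z+c\log t=(z+\log t)+(c-1)\log t$ and note $z+\log t\ge\frac12\log t$. Combining the two regimes gives \eqref{upper-bound-for-critical-U-M-for-x>-log t} with a constant $C_1$ depending only on the fixed $r$.
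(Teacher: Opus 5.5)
The paper gives no proof of this lemma (it is quoted from \cite{ABK13}). Your argument is correct and follows the standard route behind that lemma: Bramson's $\psi$ upper bound with $1-e^{-w}\le w$, the Gaussian tail of $u_M(r,\cdot)$ absorbing $e^{yz/(t-r)}$ for $z\le t$, and a first-moment bound for $z>t$. There are two harmless slips.

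First, the Mills-ratio prefactor is $\sqrt{t}/(z+\sqrt{2}t)$, not $1/z$. For $z\ge t$ it is still $\le t^{-1/2}\le t^{-3/2}(z+\log t)$, so the conclusion stands.

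Second, since $\phi_a=(1-e^{-a})1_{(-\infty,0]}\le 1_{(-\infty,0]}$, monotonicity gives $u_{\phi_a}\le u_M$, which is the wrong direction for an upper bound. If you approximate, use $u_M\le (1-e^{-a})^{-1}u_{\phi_a}$ instead (valid for any $a>0$). Applying Bramson's bound directly to Heaviside data, as you first propose, already suffices.
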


The following lemma also allows us to estimate $u_\phi$ from $u_M$ using \eqref{upper-bound-for-critical-U-M-for-x>-log t}.

\begin{lemma}\label{when-does-u-go-to-zero}
Let $f:\mathbb{R}\to  [0,\infty)$ be a non-negative measurable function supported in $[-K, \infty)$ and $\phi(x) = 1-e^{-f(-x)}$. Then, 
\begin{equation}\label{upper-bound-for-F-KPP-using-many-to-one}
    u_\phi(t, \lambda t -x) \leq e^t \mathbb{E}[f(x+ B_t - \lambda t)],
\end{equation}
and 
\begin{equation}\label{u-phi-bound-using-u-M}
     u_\phi(t, \lambda t -x) \leq u_M(t, \lambda t -x -K).
\end{equation}

In particular, for $f$ either compactly supported or of the type \eqref{Form of f}, 
\begin{equation}\label{convegence-zero-for-x>-log(t)}
u_\phi(t, \sqrt{2}t - x) \to 0    
\end{equation}
uniformly over $x\leq c\log t $ as $t\to \infty$ for some small enough constant $c>0$. 
\end{lemma}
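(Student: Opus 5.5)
We must prove Lemma \ref{when-does-u-go-to-zero}. The two inequalities are soft comparison statements. For \eqref{upper-bound-for-F-KPP-using-many-to-one}, I would use the representation \eqref{Laplace-comp-using-F-KPP-Step-1}:
\[
u_\phi(t,\lambda t-x)=1-\mathbb{E}\Bigl[e^{-\sum_{k\le n(t)} f(x+\chi_k(t)-\lambda t)}\Bigr].
\]
The elementary bound $1-e^{-y}\le y$ for $y\in[0,\infty]$ then gives
\[
u_\phi(t,\lambda t-x)\le \mathbb{E}\Bigl[\sum_{k\le n(t)} f(x+\chi_k(t)-\lambda t)\Bigr].
\]
The many-to-one Lemma \ref{Lemma many to one} turns the right-hand side into $e^t\mathbb{E}[f(x+B_t-\lambda t)]$. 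If $f$ is unbounded, I would apply the lemma to $f\wedge N$ and let $N\to\infty$ by monotone convergence.

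For \eqref{u-phi-bound-using-u-M}, I would note that $\sum_k f(x+\chi_k(t)-\lambda t)>0$ forces $x+M_t-\lambda t\ge -K$, since $f$ is supported in $[-K,\infty)$. Hence
\[
u_\phi(t,\lambda t-x)\le \mathbb{P}\bigl(\textstyle\sum_k f(\cdots)>0\bigr)\le \mathbb{P}(M_t\ge \lambda t-x-K)=u_M(t,\lambda t-x-K),
\]
where the last equality is \eqref{F-KPP-Formula-For-M}. An equivalent route is the comparison principle for F-KPP, using $\phi\le 1_{(-\infty,K]}$.

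For \eqref{convegence-zero-for-x>-log(t)}, both test-function classes are supported in some $[-K,\infty)$, so by \eqref{u-phi-bound-using-u-M} it suffices to show $u_M(t,\sqrt2 t+z)\to0$ uniformly over $z\ge -c\log t-K$. Because $u_M(t,\cdot)$ is nonincreasing in space, it is enough to treat the single point $z_t=-c\log t-K$. Taking $c<1/2$ puts $z_t\ge-\frac12\log t$ for large $t$, so Lemma \ref{Lemma upper and lower of u} applies and gives
\[
u_M(t,\sqrt2t+z_t)\lesssim t^{-3/2}\log t\, e^{\sqrt2 c\log t+\sqrt2 K}=O\bigl(t^{-3/2+\sqrt2 c}\log t\bigr).
\]
This tends to zero whenever $\sqrt2 c<3/2$, for instance $c=1/4$.

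The only delicate step is this last one: choosing $c$ so that Lemma \ref{Lemma upper and lower of u} applies and the polynomial prefactor still decays. It is a simple exponent check, and everything else is routine.
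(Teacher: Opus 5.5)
Your proof is correct and follows the paper's argument. The paper likewise bounds $1-e^{-y}\le \min\{y,1\}$, uses the support of $f$ to restrict to the event $\{x+M_t-\lambda t\ge -K\}$, and applies many-to-one for the first bound; it then deduces the last claim from \eqref{u-phi-bound-using-u-M} together with Lemma \ref{Lemma upper and lower of u}. Your reduction via monotonicity of $u_M(t,\cdot)$ to the single point $z_t=-c\log t-K$, and the exponent check showing that any $c<1/2$ works, simply fill in the details the paper dismisses as ``follows easily''.
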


\begin{proof}
    Using \eqref{Laplace-comp-using-F-KPP-Step-1}, the many-to-one Lemma \ref{Lemma many to one}, and the fact that $1-e^{-y} \leq y \wedge 1$ for $y\geq 0$, 
\begin{align} 
    \nonumber u_\phi(t, \lambda t -x) &= \mathbb{E}[1 -e^{-\sum_{k=1}^{n(t)} f(x + \chi_k(t) -\lambda t )}] \\
    \nonumber &= \mathbb{E}[(1 -e^{-\sum_{k=1}^{n(t)} f(x + \chi_k(t) -\lambda t )})1_{x+ M_t -\lambda t \geq -K}]\\
    \nonumber &\leq \mathbb{E}[\min \bigl\{1, \sum_{k=1}^{n(t)} f(x + \chi_k(t) -\lambda t )\bigr\} 1_{x + M_t -\lambda t \geq -K}] \\
    \nonumber & \leq \min \bigl\{e^t \mathbb{E}[f(x+ B_t - \lambda t)], \mathbb{P}(x+ M_t -\lambda t \geq -K)\bigr\}.
\end{align}

The claim \eqref{convegence-zero-for-x>-log(t)} follows easily using \eqref{u-phi-bound-using-u-M} and \eqref{upper-bound-for-critical-U-M-for-x>-log t}. 

\end{proof}

\begin{lemma}[Lemma 2.8 of \cite{Shekhar_Domain}\footnote{The Lemma 2.8 in \cite{Shekhar_Domain} is stated somewhat differently as compared to \eqref{lower-bound-for-critical-U-M} here. However, the proof of Lemma 2.8 in \cite{Shekhar_Domain} in fact gives the estimate \eqref{lower-bound-for-critical-U-M}. We leave the details to the reader.}] \label{critical-case-lower-bound}

There exist constants $r, C_2 >0$ such that for all $t$ large enough and $z\geq -\frac{1}{2} \log t $, 
    \begin{equation}\label{lower-bound-for-critical-U-M}
u_M(t,z+\sqrt 2t) \ge C_2 t^{-\frac{3}{2}} (z+\log t)e^{-\sqrt 2 z}e^{-\frac{z^2}{2(t-r)}}.
\end{equation}
\end{lemma}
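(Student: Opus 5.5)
The plan is to follow Bramson's and ABK's derivation of the upper bound in Lemma \ref{Lemma upper and lower of u}, now keeping track of the lower bound. Start from Proposition \ref{Bramson-psi-estimate-for-one-sided-f} with $\phi=1_{(-\infty,0]}$, and fix $r$ large but independent of $t$, so that $u_M(t,z+\sqrt2 t)\ge \gamma_r^{-1}\psi(r,t,z+\sqrt2 t)$. This holds for $t\ge 8r$ and $z\ge 8r-\frac{3}{2\sqrt2}\log t$. Since $\frac{3}{2\sqrt2}>\frac12$, the range $z\ge -\frac12\log t$ is included once $t$ is large. It then suffices to bound $\psi$ from below by $t^{-3/2}(z+\log t)e^{-\sqrt2 z}e^{-z^2/2(t-r)}$.

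For the lower bound on $\psi$, restrict the $y$-integral to a fixed window $y\in[1,2]$. On this window $u_M(r,y+\sqrt2 r)\ge c_r>0$ for the fixed $r$, and $e^{\sqrt2 y}\ge 1$. For the Gaussian factor, write
\[e^{-(y-z)^2/2(t-r)} = e^{-z^2/2(t-r)}\,e^{yz/(t-r)}\,e^{-y^2/2(t-r)},\]
and note $e^{yz/(t-r)}\ge e^{-O(\log t/t)}\ge c$ when $z\ge -\frac12\log t$, while $e^{-y^2/2(t-r)}\ge c$. The prefactor is $(2\pi(t-r))^{-1/2}e^{-\sqrt2 z}$.

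Next, the factor $1-e^{-2y(z+\frac{3}{2\sqrt2}\log t)/(t-r)}$. Set $w=z+\frac{3}{2\sqrt2}\log t$. For $z\ge-\frac12\log t$ we have $w\ge c'(z+\log t)>0$, with $c'=(\frac{3}{2\sqrt2}-\frac12)/(1-\frac12)$ or similar; in fact $w$ and $z+\log t$ are comparable on this range. Consider two regimes.
\begin{itemize}
\item If $w\le t-r$, use $1-e^{-a}\ge (1-e^{-4})a/4$ for $a\in[0,4]$. This gives a factor $\gtrsim w/t\gtrsim (z+\log t)/t$, and combined with the prefactor $t^{-1/2}$ yields exactly $t^{-3/2}(z+\log t)$.
\item If $w>t-r$, i.e. $z\gtrsim t$, the factor is bounded below by a constant. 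Then $t^{-1/2}\ge t^{-3/2}(z+\log t)\cdot c$ fails for large $z$. To handle this, let the $y$-window grow with $z$: integrate over $y\in[z/t,\,2z/t]$ or more generally keep more of the Gaussian. Alternatively, absorb the discrepancy using $e^{-z^2/2(t-r)}$ versus a slightly smaller variance.
\end{itemize}

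The second regime is the main obstacle: the target bound has polynomial growth $(z+\log t)/t$ times a Gaussian, whereas $\psi$ gives only $t^{-1/2}$. I would try to exploit $e^{yz/(t-r)}$ with $y$ up to order one, or note that $u_M(r,y+\sqrt2r)e^{\sqrt2 y}$ decays only like a Gaussian in $y$ of variance $r$. Integrating over $y\in[0,\infty)$ then gives $\int e^{-y^2/2r}e^{yz/(t-r)}dy \approx e^{rz^2/2(t-r)^2}$. This gain is what allows one to dominate $(z+\log t)/t$ for $z$ of order $t$ or larger, after possibly replacing $t-r$ by $t-r'$ in the exponent, which is why the statement carries $e^{-z^2/2(t-r)}$ rather than $e^{-z^2/2t}$.

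If that bookkeeping fails, fall back to a direct comparison for $z\ge \epsilon t$. There, Lemma \ref{lemma_upper and lower for F-KPP}(a) on compact ranges gives $u_M\gtrsim t^{-1/2}e^{-\sqrt2 z-z^2/2t}$. Combined with $(z+\log t)t^{-1}e^{-z^2/2(t-r)}\lesssim e^{-z^2/2t}\cdot (z/t)e^{-rz^2/2t^2}$, which is bounded, this suffices on $z\le\beta t$. For $z>\beta t$, use a crude lower bound via a single Brownian path and $\mathbb P(B_t\ge \sqrt2t+z)$.
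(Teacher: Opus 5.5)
\textbf{The second regime is not proved.} Your route is the right one: use the lower bound $u_M\ge\gamma_r^{-1}\psi$ from Proposition \ref{Bramson-psi-estimate-for-one-sided-f} with $r$ fixed, and restrict the $y$-integral to the bounded window $[1,2]$. This is the Bramson-$\psi$ argument the paper defers to, and your treatment of the regime $w:=z+\frac{3}{2\sqrt2}\log t\le t-r$ is correct.

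The regime $w>t-r$ (i.e.\ $z\gtrsim t$) is where the gap lies. You flag it as the main obstacle, list possible fixes without carrying any out, and the fallback you do spell out fails. Lemma \ref{lemma_upper and lower for F-KPP}(a) only covers $z\in[\alpha t,\beta t]$ with $\beta$ fixed. The single-path bound $u_M(t,x)\ge\mathbb{P}(B_t\ge x)$ loses the factor $e^{t}$ of the first moment. Indeed, the ratio of the target to $\mathbb{P}(B_t\ge z+\sqrt2 t)$ is of order $\frac{z(z+\sqrt2 t)}{t^2}e^{\,t-rz^2/(2t(t-r))}$, which diverges on the whole range $\beta t<z\ll t^{3/2}$.

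Your explanation of the factor $e^{-z^2/2(t-r)}$ is also off. It is simply what a bounded $y$-window produces. Integrating over all $y$ recovers the sharper $e^{-z^2/2t}$, so no $t-r'$ adjustment is involved.

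\textbf{The missing step.} It is the idea you mention in passing (``exploit $e^{yz/(t-r)}$ with $y$ of order one'') but do not complete. On your window the Gaussian factor already contains $e^{yz/(t-r)}$, and for $y\in[1,2]$ and $z>0$,
\[
e^{yz/(t-r)}\ \ge\ e^{z/(t-r)}\ \ge\ \frac{z}{t-r}\ \ge\ \frac{z}{t}.
\]
When $w>t-r$, the following hold for $t$ large:
\begin{itemize}
\item $2yw/(t-r)\ge 2$, so $1-e^{-2yw/(t-r)}\ge 1-e^{-2}$;
\item $z\ge t-r-\frac{3}{2\sqrt2}\log t\ge t/2$, so $z+\log t\le 2z$.
\end{itemize}
Keeping $u_M(r,y+\sqrt2 r)\ge c_r$, $e^{\sqrt2 y}\ge 1$ and $e^{-y^2/2(t-r)}\ge c$ exactly as in your first regime gives
\[
\psi(r,t,z+\sqrt2 t)\ \gtrsim\ \frac{e^{-\sqrt2 z}}{\sqrt t}\,e^{-\frac{z^2}{2(t-r)}}\,\frac{z}{t}\ \gtrsim\ t^{-3/2}(z+\log t)\,e^{-\sqrt2 z}\,e^{-\frac{z^2}{2(t-r)}}.
\]
This uses the same $r$ and the same window for all $z\ge-\frac12\log t$, with no growing window and no fallback needed.
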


As a consequence of the above bounds, the following variant of \eqref{quant-bound-for-super-critical-for-difference} holds as well. 

\begin{lemma}\label{Lemma-on-difference-U_M-for-critical}

Given $b>0$, there exists $K, C_1, C_2>0$\footnote{We stress that we are here and in \eqref{quant-bound-for-super-critical-for-difference} writing ``there exists a $K>0$" rather than writing ``for all $K>0$". We expect \eqref{quant-bound-for-super-critical-for-difference}, \eqref{quant-bound-for-difference-critical} to hold for all $K>0$ rather than only for $K$ large enough. We were able to verify \eqref{quant-bound-for-super-critical-for-difference}, \eqref{quant-bound-for-difference-critical} only for large enough $K>0$, which will be sufficient for our purposes.} such that for all $t$ large enough and $z\in [-\frac{1}{4}\log t , b t]$,
    \begin{align}\label{quant-bound-for-difference-critical}
        C_1 t^{-\frac{3}{2}} (z+\log t)e^{-\sqrt 2 z}e^{-\frac{z^2}{2t}}\le u_M(t,  &\sqrt{2}t + z - K ) - u_M(t, \sqrt{2}t + z + K ) \\ \nonumber & \le C_2 t^{-\frac{3}{2}} (z+\log t)e^{-\sqrt 2 z}e^{-\frac{z^2}{2t}}.
    \end{align}
    
\end{lemma}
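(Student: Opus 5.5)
The plan is to mirror the proof of Lemma \ref{lemma_upper and lower for F-KPP}(b), now using the critical upper and lower bounds of Lemma \ref{Lemma upper and lower of u} and Lemma \ref{critical-case-lower-bound}.

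\emph{Upper bound.} Since $u_M\ge 0$, we have $u_M(t,\sqrt2 t+z-K)-u_M(t,\sqrt2 t+z+K)\le u_M(t,\sqrt2 t+(z-K))$. For $z\ge -\frac14\log t$ and fixed $K$, the argument $z-K\ge -\frac12\log t$ once $t$ is large, so \eqref{upper-bound-for-critical-U-M-for-x>-log t} applies and gives the bound $C_1t^{-3/2}(z-K+\log t)e^{-\sqrt2(z-K)}e^{-(z-K)^2/2t}$. The factor $e^{\sqrt2K}$ is a constant. We also have $(z-K+\log t)\le (z+\log t)$, and $(z+\log t)\ge \frac34\log t$ is bounded below. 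Finally $e^{-(z-K)^2/2t}=e^{-z^2/2t}e^{(2zK-K^2)/2t}\le e^{-z^2/2t}e^{bK}$ for $z\le bt$. Together these give the upper bound with $C_2$ depending on $b,K$.

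\emph{Lower bound.} As in \eqref{Eqn_ratio of two F-KPP}, it suffices to show that for $K$ large,
\[
\frac{u_M(t,\sqrt2 t+z-K)}{u_M(t,\sqrt2 t+z+K)}\ge 2
\]
uniformly for $z\in[-\frac14\log t,bt]$ and $t$ large. Then the difference is at least $u_M(t,\sqrt2t+z+K)$, and we finish by applying \eqref{lower-bound-for-critical-U-M} at $z+K$. That gives $C t^{-3/2}(z+K+\log t)e^{-\sqrt2 z}e^{-(z+K)^2/2(t-r)}$. Here $(z+K+\log t)\ge z+\log t$. For the Gaussian factor we need $(z+K)^2/(t-r)-z^2/t$ to be bounded above for $z\le bt$. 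This holds because $z^2\big(\tfrac1{t-r}-\tfrac1t\big)\le z^2 r/(t(t-r))\lesssim b^2r$ and $2zK/(t-r)\lesssim bK$.

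For the ratio, we use \eqref{lower-bound-for-critical-U-M} in the numerator and \eqref{upper-bound-for-critical-U-M-for-x>-log t} in the denominator:
\[
\frac{u_M(t,\sqrt2t+z-K)}{u_M(t,\sqrt2t+z+K)}\ge \frac{C_2}{C_1}\,\frac{z-K+\log t}{z+K+\log t}\,e^{2\sqrt2K}\,e^{-\frac{(z-K)^2}{2(t-r)}+\frac{(z+K)^2}{2t}}.
\]
The exponent equals $\frac{2zK}{t}\cdot\tfrac{1}{2}\cdot 2+O(1)$ minus $z^2\frac{r}{2t(t-r)}$ and similar terms. It is therefore bounded below by $-c(b,r)-O(K/t)$, since the cross terms have the favourable sign for $z\ge0$ and are $O(K\log t/t)$ for negative $z$. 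The linear prefactor is at least $\frac{\frac34\log t-K}{bt+K+\log t}$, which could be small when $z$ is large. However, for $z\ge 1$ it is $\ge \tfrac12$ once $z+\log t\ge 4K$. In the remaining range it is $\ge\frac{\frac34\log t -K}{\log t+K+1}\ge \tfrac12$ for $t$ large. So the ratio is at least $c'(b,r)e^{2\sqrt2K}$, which exceeds $2$ for $K$ large.

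\emph{Main obstacle.} The main difficulty is the order of choices. The constant $r$ in Lemma \ref{critical-case-lower-bound} and the constant $e^{bK}$-type losses must not depend on $K$ in a way that cancels the gain $e^{2\sqrt2K}$. The terms that need care are those with $zK/t$. The numerator picks up $e^{+zK/(t-r)}$, which has the favourable sign, while the discrepancy between $t-r$ and $t$ only contributes a $K$-independent constant $e^{-b^2r}$. So we first fix $r$ and $b$, then choose $K$ so that $c'(b,r)e^{2\sqrt2K}\ge2$, and only then take $t$ large depending on $K$ so that $-\frac14\log t\pm K\ge-\frac12\log t$. Both bounds \eqref{upper-bound-for-critical-U-M-for-x>-log t} and \eqref{lower-bound-for-critical-U-M} are then applicable at $z\pm K$.
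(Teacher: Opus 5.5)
Your proposal is correct and follows the paper's proof essentially step for step. The upper bound comes from \eqref{upper-bound-for-critical-U-M-for-x>-log t} applied at $z-K$. For the lower bound, you show $u_M(t,\sqrt2t+z-K)/u_M(t,\sqrt2t+z+K)\ge 2$ for large $K$, using \eqref{lower-bound-for-critical-U-M} in the numerator and \eqref{upper-bound-for-critical-U-M-for-x>-log t} in the denominator, and then apply \eqref{lower-bound-for-critical-U-M} at $z+K$.

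You are in fact more explicit than the paper on several points that it absorbs into $\gtrsim$: the prefactor $(z-K+\log t)/(z+K+\log t)$, the exponent $\frac{2zK}{t-r}-\frac{r(z+K)^2}{2t(t-r)}$, and the order in which $r$, $K$ and $t$ are chosen. Your expression "$\frac{2zK}{t}\cdot\frac12\cdot 2+O(1)$" is garbled, but it should simply be the exact exponent just written, and your conclusion from it is right.
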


\begin{proof}
    The upper bound is immediate from \eqref{upper-bound-for-critical-U-M-for-x>-log t} by noting that for $z\in [-\frac{1}{4}\log t , b t]$, $e^{-(z-K)^2/2t} = O(e^{-z^2/2t})$.\\
    For the lower bound, proceeding similarly to Lemma \ref{lemma_upper and lower for F-KPP}, using \eqref{upper-bound-for-critical-U-M-for-x>-log t} and \eqref{lower-bound-for-critical-U-M},
    \begin{align*}
          \frac{u_M(t,\sqrt 2t+z-K)}{u_M(t,\sqrt 2t+z + K)} &\gtrsim \frac{t^{-\frac{3}{2}} (z -K +\log t)e^{-\sqrt 2 (z-K)}e^{-\frac{(z-K)^2}{2(t-r)}}}{t^{-\frac{3}{2}} (z+ K + \log t)e^{-\sqrt 2(z+K)}e^{-\frac{(z+K)^2}{2t}}} \\
          & \gtrsim e^{2\sqrt 2K} e^{\frac{(z+K)^2}{2t}- \frac{(z-K)^2}{2(t-r)}} \\& 
          \gtrsim e^{2\sqrt 2K}, 
     \end{align*}
     where in the last line we have used that $e^{\frac{(z+K)^2}{2t}- \frac{(z-K)^2}{2(t-r)}} \gtrsim 1 $ which holds clearly since $z\in [-\frac{1}{4}\log t , b t]$. Hence, by choosing $K$ large enough, we obtain 
     \[ \frac{u_M(t,\sqrt 2t+z-K)}{u_M(t,\sqrt 2t+z + K)} \geq 2,\]
     which in turn implies 
     \[u_M(t,\sqrt 2t+z-K) - u_M(t,\sqrt 2t+z + K) \geq u_M(t,\sqrt 2t+z + K).\]

    Then, using \eqref{lower-bound-for-critical-U-M} once again completes the proof of the lower bound in \eqref{quant-bound-for-difference-critical}.
\end{proof}

\begin{remark}
    The lower bounds in estimates \eqref{quant-bound-for-super-critical-for-difference} and \eqref{quant-bound-for-difference-critical} required a careful verification, as presented above since Bramson's $\psi$ estimate (Proposition \ref{Bramson-psi-estimate-for-one-sided-f}) is not applicable for compactly supported initial conditions $\phi$. Proposition \ref{Bramson-psi-estimate-for-one-sided-f} is valid only for initial conditions $\phi$ satisfying certain conditions which are in particular satisfied by $\phi(x) = 1-e^{-f(-x)}$ with $f$ of the form \eqref{Form of f}, see [Chapter 6-\cite{Bovier_book}] for details.
\end{remark}

\subsection{Extremal point processes of BBM}
\label{Subsection Extremal Process}

For a binary  BBM $(\{\chi_k(t); 1\leq k\leq n(t)\}, t\ge0)$ on the real line, recall that we denote the maximal position at time $t$ by
\[
M_t:=\max_{1\leq k\leq n(t)}\chi_k(t). 
\]
Bramson \cite{Bramson78}, \cite{Bramson83} proved that $M_t-m(t)$ converges in law to some non-degenerate random variable $M_\infty$, where 
\begin{align}\label{e.m}
m(t):= \sqrt{2}t - \frac{3}{2\sqrt{2}}\log_{+}(t).
\end{align}
Then, Lalley and Sellke showed in \cite{Lalley_Sellke} that the limiting distribution function is a randomly shifted Gumbel distribution given by 
\begin{equation}\label{limiting-distribution-by-derivative-martingale}
    \mathbb{P}(M_\infty \leq x) =\mathbb{E}[e^{-\Cb_MZ_\infty e^{-\sqrt{2}x}}]
\end{equation} 
where $\Cb_M$ is as defined by \eqref{rename-C(f)-for-M} and $Z_\infty $ is the positive random variable which is the a.s. limit of the so-called \textit{derivative martingale}
\[Z_t := \sum_{k=1}^{n(t)}(\sqrt{2}t- \chi_k(t))\exp\bigl\{-\sqrt{2}(\sqrt{2}t- \chi_k(t))\bigr\}.\]
Later, it was proven in \cite{ABK13} and \cite{ABBS13} that the point process defined by
\begin{equation}\label{abk-conv}
    \mathcal{E}_t := \sum_{k=1}^{n(t)}\delta_{\chi_k(t)- m(t)} 
\end{equation}
converges in law to a non-trivial point process $\mathcal{E}_{\infty}$ as $t\to \infty$. The point process $\mathcal{E}_\infty$ is called the \textit{(limiting) extremal point process of BBM}. The law of $\mathcal{E}_\infty$ can be described as follows. 

Let $\mathcal{P}=\sum_{i\ge1}\delta_{p_i}$ be a Poisson point process independent of $Z_\infty$ and with intensity $\sqrt{2}\Cb_Me^{-\sqrt{2}x}dx$, where $\Cb_M$ is the same constant appearing in \eqref{limiting-distribution-by-derivative-martingale}. For each atom $p_i$ of $\mathcal{P}$, we attach a point process $\mathcal{D}^i=\sum_{j\ge1}\delta_{\mathcal{D}_j^i}$ where $\mathcal{D}^i$, $i\ge1$ are i.i.d. copies of a certain point process $\mathcal{D}$ and independent of $(\mathcal{P},Z_\infty)$. In this way, we get
\begin{equation}\label{def-Et}
 \mathcal{E}_{\infty} = \sum_{i, j} \delta_{p_i + \mathcal{D}^i_j + \frac{1}{\sqrt{2}}\log(Z_\infty)}.
 \end{equation}
The point process $\mathcal{E}_\infty$ is thus called a decorated Poisson point process with decoration process $\mathcal{D}$. Moreover, the decoration process $\mathcal{D}$ is a point process supported on $(-\infty,0]$ with an atom at $0$. 
The papers \cite{ABK13},\cite{ABBS13} also described its precise law as 

\begin{equation}\label{critical-deco-def}
    \mathbb P(\mathcal D\in \cdot) = \lim_{t\to\infty}\mathbb P(\sum_{k = 1}^{n(t)}\delta_{\chi_{k}(t) - M_{t} } \in \cdot | M_{t} \ge \sqrt{2}t).
\end{equation}

It is convenient to remove the randomness coming from the derivative martingale limit $Z_\infty$. Define 
\begin{equation}\label{tilde-def}
\widetilde{\mathcal{E}}_\infty := \mathcal{E}_\infty (\cdot + \frac{1}{\sqrt{2}}\log(Z_\infty))=\sum_{i, j} \delta_{p_i + \mathcal{D}^i_j}.
\end{equation} 

This process was proved in \cite{ABBS13} (also in \cite{ABK13}, but it is not explicitly stated there) to be the limit in law of 
\[\widetilde{\mathcal{E}}_{t} := \sum_{k=1}^{n(t)}\delta_{\chi_k(t)- m(t)- \frac{1}{\sqrt{2}}\log(Z_t)}.\]

Furthermore, it was proven in \cite{ABK13} that $\theta_t^{\sqrt{2}}$ given by \eqref{theta_t-def} for $\lambda = \sqrt{2}$ started from $\theta = PPP(\sqrt{\frac{2}{\pi}}(-x)e^{-\sqrt{2}x}1_{x < 0} dx)$ converges in distribution to $\widetilde{\mathcal{E}}_\infty$. Then, using the Laplace transform formula for PPP (see Proposition 2.12 of \cite{Bovier_book}), formula \eqref{Laplace-comp-using-F-KPP}, and \eqref{constant-f}, it follows that for all $f$ of type \eqref{Form of f}, 
\begin{equation}\label{critical-E-Laplace-fromula-using-C(f)}
\mathbb{E}[e^{-\langle f, \widetilde{\mathcal E}_\infty \rangle}] = e^{-\Cb(f) }. 
\end{equation}

Also, if $S$ is $[-\infty, \infty)$-valued random variable independent of $\widetilde{\mathcal E}_\infty $, using \eqref{C-shift},  \eqref{critical-E-Laplace-fromula-using-C(f)} implies that 

\begin{equation}\label{critical-E-Laplace-fromula-with-shift}
    \mathbb{E}[e^{-\langle f, \widetilde{\mathcal E}_\infty(\cdot -S)  \rangle}] = \mathbb{E}[e^{- e^{\sqrt{2} S}\Cb(f)}].
\end{equation}

\vspace{3mm}

For $\lambda > \sqrt{2}$, Kabluchko in \cite{Kabluchko} proved that $\theta_t^\lambda$, given by \eqref{theta_t-def} and started from 
$\theta = PPP(\frac{1}{\sqrt{2\pi}}e^{-\mu x } dx)$, converges in distribution to a non-trivial point process which we write as $\widetilde{\mathcal E}_\infty^\lambda$. 
Again, using the Laplace transform formula for PPP (see Proposition 2.12 of \cite{Bovier_book}), formula \eqref{Laplace-comp-using-F-KPP}, and \eqref{C(a)-specific-to-lambda}, it follows that for all $f$ of type \eqref{Form of f}, 
\begin{equation}\label{supercritical-E-Laplace-fromula}
\mathbb{E}[e^{-\langle f, \widetilde{\mathcal E}_\infty^\lambda  \rangle}] = e^{-C(f, \alpha(\lambda))}, 
\end{equation}
where $C(f, \alpha(\lambda))$ is given as in Lemma \ref{C(a) definition}. Furthermore, if $S$ is $[-\infty, \infty)$-valued random variable independent of $\widetilde{\mathcal E}_\infty^\lambda $, using \eqref{shift-relation-for-C(a)},  \eqref{supercritical-E-Laplace-fromula} implies that 

\begin{equation}\label{supercritical-E-Laplace-fromula-with-shift}
    \mathbb{E}[e^{-\langle f, \widetilde{\mathcal E}_\infty^\lambda(\cdot -S)  \rangle}] = \mathbb{E}[e^{- e^{\mu S}C(f, \alpha(\lambda))}].
\end{equation}

\vspace{5mm}

The point process $\widetilde{\mathcal E}_\infty^\lambda$ is a form of extremal point process, namely it is the supercritical extremal point process of BBM. As an extension to \eqref{critical-deco-def}, Bovier and Hartung introduced in \cite{bovier2014extremal} a family of decoration point processes $\mathcal{D}^\rho$ for $\rho \geq \sqrt{2}$ defined by 

\begin{equation}\label{decoration_family}
     \mathbb P(\mathcal{D}^{\rho} \in \cdot) := \lim_{t\to\infty}\mathbb P(\sum_{k = 1}^{n(t)}\delta_{\chi_{k}(t) - M_{t} } \in \cdot | M_{t} \ge \rho t).
\end{equation}

These decoration point processes $(\mathcal{D}^{\rho})_{\rho \ge \sqrt{2}}$ also appear in the extremal processes of variable speed branching Brownian motions \cite{bovier2015variable} and multi-type branching Brownian motions \cite{belloum2021anomalous}. The special case of $\rho = \sqrt{2}$ gives the decoration point process $\mathcal{D}^{\sqrt{2}} = \mathcal{D}$ of the critical extremal point process $\widetilde{\mathcal{E}}_\infty$ appearing in \eqref{tilde-def}. The $\mathcal{D}^\rho$ for $\rho >\sqrt{2}$ are called the supercritical decorations. The $\widetilde{\mathcal E}_\infty^\lambda$ is also a decorated Poisson point process with the decoration given by $\mathcal{D}^\mu$ for $\mu = \lambda + \sqrt{\lambda^2 -2} >\sqrt{2}$. More precisely, let $\mathcal{P}^\mu = \sum_{i\geq 1}\delta_{p_i}$ be a Poisson point process with intensity $\mu C_M(\alpha(\lambda))e^{-\mu x}dx$, where $C_M(\alpha(\lambda))$ is given by \eqref{rename-C(f)-for-M-for-supercritical}, and $\{\mathcal{D}^{\mu, i}\}_{i\geq 1}$ be an i.i.d. collection of point processes distributed as $\mathcal{D}^{\mu}$. Then, similarly to \eqref{tilde-def}, 

\begin{equation}\label{decoration-decomp-for-super-critical}
    \widetilde{\mathcal{E}}_\infty^\lambda =\sum_{i, j} \delta_{p_i + \mathcal{D}^{\mu,i}_j}.
\end{equation}

Since we could not find the above description of $\widetilde{\mathcal{E}}_\infty^\lambda$ explicitly stated in the literature, we include a proof of \eqref{decoration-decomp-for-super-critical} in Appendix \ref{Section Appendix}, see Proposition \ref{Identify_DPPP}.

\subsection{An excerpt from results in \cite{Shekhar_Domain}.}

\begin{proposition}[Theorem 1.4, Remark 6, Proposition 4.1 of \cite{Shekhar_Domain}]\label{excerpt-from-CGS-AAP}
Let $\theta$ be a point process such that $\theta \in \mathcal{N}_2$ and $\theta$  has a finite top particle almost surely. Suppose 
\[ Y_t := \frac{1}{t^{3/2}}\int_{-\infty}^0(-x)e^{\sqrt{2}x} e^{-\frac{x^2}{2t}} \theta(dx) \]
is tight in $t>0$. Then, given a sequence $\{t_n\}_{n\geq 1}$ and a non-negative random variable $Z$ such that $Y_{t_n} \overset{d}{\to} Z$, it holds that  
\begin{equation}\label{main-result-CGS-along-subsequence}
     \langle f, \theta_{t_n}^{\sqrt{2}} \rangle \overset{d}{\to} \langle f, \widetilde{\mathcal{E}}_\infty(\cdot - S)\rangle,
\end{equation}
for all $f$ of form \eqref{Form of f}, where $\theta_t^{\sqrt{2}}$ is given by \eqref{theta_t-def} for $\lambda =\sqrt{2}$ started from $\theta_0^{\sqrt{2}} = \theta $ and $S= \log Z/\sqrt{2}$. In particular, \eqref{main-result-CGS-along-subsequence} holds for all $f\in C_c^+(\mathbb{R})$ as well.  
\end{proposition}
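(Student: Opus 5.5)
The plan is to compute the Laplace functional of $\langle f,\theta_{t_n}^{\sqrt2}\rangle$ through Lemma \ref{Formula-for-Laplace-of-theta_t}. I will show that the exponent there equals $-\Cb(f)\,Y_t$ up to an error that vanishes in probability, and then pass to the limit along $t_n$. Fix $f$ of the form \eqref{Form of f} and $\rho>0$, and replace $f$ by $\rho f$, which is again of the form \eqref{Form of f}. By \eqref{Laplace-comp-using-F-KPP},
\[
\mathbb{E}[e^{-\rho\langle f,\theta_t^{\sqrt2}\rangle}]=\mathbb{E}\Bigl[\exp\Bigl\{\int\log\bigl(1-u_\phi(t,\sqrt2 t-x)\bigr)\theta(dx)\Bigr\}\Bigr],
\]
with $\phi$ built from $\rho f$. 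The goal is to show that the integral equals $-\Cb(\rho f)\,Y_t+o_{\mathbb P}(1)$. Granting this, tightness of $Y_t$ and $Y_{t_n}\overset{d}{\to}Z$ give the limit $\mathbb{E}[e^{-\Cb(\rho f)Z}]$. This equals $\mathbb{E}[e^{-e^{\sqrt2 S}\Cb(\rho f)}]$ with $S=\log Z/\sqrt2$, which by \eqref{critical-E-Laplace-fromula-with-shift} is the Laplace transform of $\langle \rho f,\widetilde{\mathcal E}_\infty(\cdot-S)\rangle$. Since $\rho>0$ is arbitrary, Lemma \ref{Feller's lemma} yields the convergence in distribution. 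The extension to $f\in C_c^+(\mathbb{R})$ is then the standard sandwiching by functions of the form \eqref{Form of f}.

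To identify the exponent, split the atoms of $\theta$ into three regions and treat them as follows, writing $z=-x$.
\begin{enumerate}
\item \emph{Atoms with $x\ge -A$.} Because the top particle is finite and $\theta$ is locally finite, there are almost surely only finitely many such atoms. By \eqref{convegence-zero-for-x>-log(t)}, $u_\phi(t,\sqrt2t-x)\to0$ uniformly over them. Their contribution to the integral, and their contribution to $Y_t$ (which carries the prefactor $t^{-3/2}$), both vanish.
\item \emph{Atoms with $-x\ge B\sqrt t$.} The upper bound \eqref{upper-bound-for-critical-U-M-for-x>-log t}, combined with \eqref{u-phi-bound-using-u-M}, gives $u_\phi\lesssim t^{-3/2}(z+\log t)e^{-\sqrt2 z}e^{-z^2/2t}$. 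Since $u$ is uniformly small there, $|\log(1-u)|\le 2u$. I then write $e^{-z^2/2t}\le e^{-B^2/4}e^{-z^2/(2\cdot 2t)}$, so this part is at most $C e^{-B^2/4}\,Y_{2t}$. By tightness of $Y$ this is small in probability uniformly in $t$ once $B$ is large. The same argument bounds the corresponding part of $Y_t$ itself.
\item \emph{The bulk $z\in[A,B\sqrt t]$.} Here I need the sharp asymptotic
\[
u_\phi(t,\sqrt2t+z)=(1+\varepsilon_{A,B}(t))\sqrt{\tfrac{2}{\pi}}\,\Cb(f)\,t^{-3/2}\,z\,e^{-\sqrt2 z}e^{-z^2/2t},
\]
with $\varepsilon\to0$ uniformly in the region as $t\to\infty$ and then $A\to\infty$. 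Given this, and since $\log(1-u)=-u(1+o(1))$, the bulk contributes $-\Cb(f)Y_t(1+o(1))$ up to the normalising constant, while the excised pieces of $Y_t$ are small by the first two regions.
\end{enumerate}

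The main obstacle is establishing this uniform asymptotic. I would derive it from Bramson's $\psi$ estimate, Proposition \ref{Bramson-psi-estimate-for-one-sided-f}, with $r$ fixed and large. For $z\le B\sqrt t$ and $y$ in a range $y\le y_0(r)$, one has
\[
1-e^{-2y(z+\frac{3}{2\sqrt2}\log t)/(t-r)}\approx \frac{2yz}{t},\qquad e^{-(y-z)^2/2(t-r)}\approx e^{-z^2/2t},
\]
where the first approximation uses $z\ge A$ so that $\log t$ is negligible relative to $z$ only after adjusting by $A$, and the ratio tends to $1$. Substituting these into $\psi$ gives
\[
\psi\approx \frac{2z}{\sqrt{2\pi}\,t^{3/2}}e^{-\sqrt2 z}e^{-z^2/2t}\int_0^\infty u_\phi(r,y+\sqrt2 r)\,y\,e^{\sqrt2 y}\,dy.
\]
As $r\to\infty$ the last integral tends to $\sqrt{\pi/2}\,\Cb(f)$ by \eqref{constant-f}, and $\gamma_r\downarrow1$. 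The delicate point is the tail $y>y_0$ of the $y$-integral. I would control it using the Gaussian bound on $u_\phi(r,\cdot)$ from \eqref{upper-bound-for-F-KPP-using-many-to-one}, which for fixed $r$ makes $u(r,y+\sqrt2 r)ye^{\sqrt2y}e^{(yz)/t}$ integrable uniformly over $z\le B\sqrt t$. I would also need to handle the $\log t$ correction for $z$ near $A$, which I expect is absorbed by letting $A\to\infty$ after $t\to\infty$.
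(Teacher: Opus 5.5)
The paper does not reprove this statement; it imports it from \cite{Shekhar_Domain}, so your argument has to stand on its own. The outer structure is fine: the Laplace-functional reduction, region 1, and the identification of the limit via \eqref{critical-E-Laplace-fromula-with-shift} and Lemma \ref{Feller's lemma}. The bulk step, however, fails.

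The uniform asymptotic you claim on $[A,B\sqrt t]$ is false. In Bramson's $\psi$ the last factor linearizes to $2y(z+\tfrac{3}{2\sqrt2}\log t)/t$, not to $2yz/t$. Your own computation therefore gives
\[
u_\phi(t,\sqrt2t+z)=(1+o(1))\,\Cb(f)\,t^{-3/2}\Bigl(z+\tfrac{3}{2\sqrt2}\log t\Bigr)e^{-\sqrt2z}e^{-z^2/2t},
\]
with constant exactly $\Cb(f)$, not $\sqrt{2/\pi}\,\Cb(f)$. For $z=O(\log t)$, in particular $z$ near $A$, the ratio of this to your expression is about $1+\frac{3\log t}{2\sqrt2\, z}\to\infty$. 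Sending $A\to\infty$ after $t\to\infty$ cannot absorb this, because $\log t$ has already diverged. So the bulk carries the extra term $\tfrac{3}{2\sqrt2}\,t^{-3/2}\log t\sum_{z_i\in[A,B\sqrt t]}e^{-\sqrt2z_i}e^{-z_i^2/2t}$. Tightness used at the same time $t$ only bounds this by $(\log t/A)\,Y_t$, which is not small.

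The missing idea is to use tightness of $Y_s$ at a much smaller time, as the paper does for $J_t^3$ in Section \ref{Section proof critical}. For $z_i\le t^{1/4}$ one has $e^{-z_i^2/2\sqrt t}\ge e^{-1/2}$, so
\[
t^{-3/2}\sum_{z_i\in[A,t^{1/4}]}(z_i+\log t)\,e^{-\sqrt2 z_i}\lesssim \frac{\log t}{t^{3/4}}\,Y_{\sqrt t}\overset{\mathbb P}{\to}0 .
\]
Combined with \eqref{u-phi-bound-using-u-M} and \eqref{upper-bound-for-critical-U-M-for-x>-log t}, this removes these atoms both from $\int u_\phi\,d\theta$ and from $Y_t$. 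On $[t^{1/4},B\sqrt t]$ one has $\log t/z\le t^{-1/4}\log t\to0$, so there the corrected asymptotic does reduce to yours.

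There is also a smaller slip in region 2. The $K$-shift produces the factor $e^{-(z-K)^2/2t}$, which is not $\lesssim e^{-z^2/2t}$ when $z\gg t$. Use $e^{-(z-K)^2/2t}\le e^{K^2/t}e^{-z^2/4t}$ instead, and compare with $e^{-B^2/8}Y_{4t}$ rather than $e^{-B^2/4}Y_{2t}$.
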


\section{Some Derived properties of $\theta \in Dom_{\lambda}(\Pi)$.} \label{Section Properties theta in Don}
\textcolor{black}{
In this section, we derive several structural properties of the point process
$\theta \in Dom_{\lambda}(\Pi)$. Although $\theta$ is assumed to be only locally finite, the condition $\theta \in Dom_{\lambda}(\Pi)$
forces $\theta$ to satisfy certain properties, including integrability, tightness estimates, and lacunarity. These properties and estimates will be key
ingredients in the proofs of the main theorems in the subsequent sections.
}

\subsection{Integrability and tightness estimates for $\theta \in Dom_{\lambda}(\Pi)$, \textcolor{black}{with $\lambda \ge \sqrt 2$}}

\begin{proposition}\label{crucial-tightness-prop}
    Let $\Pi$ be a fixed point of BBM with drift $\lambda \geq \sqrt{2}$ and let $\theta \in Dom_{\lambda}(\Pi)$ be a point process. Then, 
    \begin{enumerate}
        \item $\theta \in \mathcal{N}_2$ almost surely (recall the space $\mathcal{N}_2$ given by \eqref{N_2-def}). 

        \item For each $K>0$, the family of random variables $\{Z_t^a\}_{t > 0}$ given by \[ Z_t^a = \int_{-\infty}^{+\infty} \mathbb{P}(|x + B_t - \lambda t| \leq K) \theta(dx)\] is tight, where $B_t$ is a standard Brownian motion.  

        \item For each $K>0$, the family of random variables $\{Z_t^b\}_{t > 0}$ given by \[ Z_t^b = \int_{-\infty}^{+\infty} \mathbb{P}(|x + M_t - \lambda t| \leq K) \theta(dx)\] is tight, where $M_t$ is the maximum particle of BBM $\{\chi_k(t)\}_{k\leq n(t)}$. 
    \end{enumerate}
\end{proposition}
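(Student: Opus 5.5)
The common mechanism for all three parts is thinning. For a $\mathcal{N}$-valued random variable $\theta$ and $t>0$, consider the thinned collections $U(\theta_t^\lambda)$ and $M(\theta_t^\lambda)$ from \eqref{def-thinning-U} and \eqref{def-thinning-M}. Fix $K>0$ and let $g\in C_c^+(\mathbb{R})$ with $g\ge 1_{[-K,K]}$. The number of atoms of $\theta_t^\lambda$ in $[-K,K]$ dominates both $U(\theta_t^\lambda)([-K,K])$ and $M(\theta_t^\lambda)([-K,K])$, because each atom of a thinning is an atom of $\theta_t^\lambda$. Since $\theta\in Dom_\lambda(\Pi)$, Lemma \ref{Feller's lemma} applied to $X_t=\langle \rho g,\theta_t^\lambda\rangle$ gives two facts: $\mathbb{P}(\langle g,\theta_t^\lambda\rangle=\infty)\to0$, and $\langle g,\theta_t^\lambda\rangle$ is tight for $t\ge t_0$. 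The plan is to convert this tightness of counts into tightness of the conditional means $Z^a_t$ and $Z^b_t$.

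\emph{Parts (2) and (3).} Conditionally on $\theta$, the count $N^a_t:=U(\theta_t^\lambda)([-K,K])=\sum_i X_i$ is a sum of independent Bernoulli variables with parameters $p_i=\mathbb{P}(|x_i+B_t-\lambda t|\le K)$. Its conditional mean is exactly $Z^a_t$, and the analogous statement holds for $M$ with mean $Z^b_t$. Lemma \ref{Lemma_bernoulli} then applies conditionally on $\theta$.
\begin{itemize}
\item If $Z^a_t=\infty$, then $N^a_t=\infty$ almost surely.
\item If $Z^a_t<\infty$, then $\mathbb{P}(N^a_t\le Z^a_t/2\mid\theta)\le 4/Z^a_t$.
\end{itemize}
Hence, for $A$ large,
\[
\mathbb{P}(Z^a_t\ge A)\le \mathbb{P}(N^a_t\ge A/2)+4/A\le \mathbb{P}(\langle g,\theta_t^\lambda\rangle\ge A/2)+4/A .
\]
The right-hand side is small uniformly in $t\ge t_0$ by the tightness above. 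This proves parts (2) and (3) for $t\ge t_0$, which is the only range the paper's notion of tightness requires; it also shows $Z^a_t,Z^b_t<\infty$ almost surely. If tightness over all $t>0$ is actually intended, the small-$t$ range would have to come from the fact that $Z^a_t$ is finite for every $t$ once part (1) is known, together with some monotonicity in $t$. I would not insist on that.

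\emph{Part (1).} Use the finiteness of $Z^a_t$ at a single fixed time. Since $\mathbb{P}(\langle g,\theta_t^\lambda\rangle=\infty)\to0$, some $t_1$ has $\theta_{t_1}^\lambda$ locally finite with positive probability. Better, the Bernoulli dichotomy shows that $\{Z^a_t=\infty\}\subset\{N^a_t=\infty\}$ up to null sets, and the latter event has probability tending to $0$. This gives $\mathbb{P}(Z^a_t<\infty)\to1$. By the Gaussian density,
\[
Z^a_t\asymp \int \exp\Bigl(-\frac{(x-\lambda t)^2}{2t}\Bigr)\,\theta(dx)\cdot \frac{C_K}{\sqrt t},
\]
so $\int e^{-x^2/(2t)}e^{\lambda x}\,\theta(dx)<\infty$ with probability tending to $1$ as $t\to\infty$. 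To reach every $\alpha>0$, apply the same argument with $[-K,K]$ replaced by a window centered at a shifted point $y$; equivalently, use the windows $[y-K,y+K]$ together with the tightness of $\langle g(\cdot-y),\theta_t^\lambda\rangle$, which follows from convergence of Laplace functionals. Combining the shifts $y$ and $-y$ removes the factor $e^{\lambda x}$ on both sides up to constants. For a given $\alpha$, taking $t\ge 1/\alpha$ gives $e^{-\alpha x^2}\lesssim e^{-x^2/(2t)}\cosh$-type bounds after absorbing the linear exponentials (for large $|x|$, $e^{-\alpha x^2}$ beats $e^{-x^2/2t}e^{\pm c x}$ when $1/(2t)<\alpha$). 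So the event $\{\int e^{-\alpha x^2}\,\theta(dx)<\infty\}$ has probability at least $\mathbb{P}(Z_t<\infty)$ for all large $t$, and this tends to $1$. Taking $\alpha=1/n$ over countably many $n$ then gives $\theta\in\mathcal{N}_2$ almost surely.

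\emph{Main obstacle.} The delicate step is this last one: making sure a single window, rather than a family of shifted windows, suffices to control both tails of $\theta$, and handling the requirement ``for all $\alpha>0$'' by letting $t\to\infty$. I would first try to show that $\mathbb{P}(Z^a_t=\infty)\to0$ along $t\to\infty$ already controls both tails, since the Gaussian weight is two-sided. The drift factor $e^{\lambda x}$ is harmless on the left tail, and on the right tail it is dominated by $e^{-\alpha x^2}$ once $t>1/(2\alpha)$.
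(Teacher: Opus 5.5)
Your mechanism is the same as the paper's. You use Lemma \ref{Feller's lemma} for $\langle g,\theta_t^\lambda\rangle$, domination of the thinnings $U(\theta_t^\lambda)$ and $M(\theta_t^\lambda)$ by $\theta_t^\lambda$, and conditional Bernoulli concentration (Lemma \ref{Lemma_bernoulli}) to pass from counts to the conditional means $Z_t^a, Z_t^b$.

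The one real gap is the time range in parts (2) and (3). Convergence in law of $\langle g,\theta_t^\lambda\rangle$ only gives $\lim_{A\to\infty}\limsup_{t\to\infty}\mathbb{P}(Z_t^a\ge A)=0$. The time beyond which $\mathbb{P}(Z_t^a\ge A)\le\epsilon$ depends on $\epsilon$, so there is no fixed $t_0$. The paper's convention requires $\sup_{t\ge 1}$ and exempts only $t\in(0,1)$, so you must also control $Z_t^a$ and $Z_t^b$ uniformly on $[1,T]$ for every $T$. Your suggested route does not supply this: neither family is monotone in $t$. Also, your estimate only yields $\mathbb{P}(Z_t^a=\infty)\to0$, not $Z_t^a<\infty$ a.s. for each $t$.

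This is why the paper proves (1) first. For $t\in[1,T]$ and $|x|>c_T:=\lambda T+K$,
\[
\mathbb{P}(|x+B_t-\lambda t|\le K)\le 2e^{-(|x|-c_T)^2/(2T)},\qquad \mathbb{P}(|x+M_t-\lambda t|\le K)\le e^{T}e^{-(|x|-c_T)^2/(2T)}.
\]
The second bound uses $\mathbb{P}(M_t\ge y)\le e^t\mathbb{P}(B_t\ge y)$ when $x\to-\infty$, and $\mathbb{P}(M_t\le y)\le\mathbb{P}(B_t\le y)$ (the tagged particle lies below $M_t$) when $x\to+\infty$. Since $\theta\in\mathcal{N}_2$, these bounds give $\sup_{t\in[1,T]}(Z_t^a+Z_t^b)<\infty$ a.s. Combined with your large-$t$ estimate, this gives tightness over $t\ge1$.

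Part (1), in its final form, is the paper's argument. The paper uses a single window and the fact that $e^{x^2/t}\mathbb{P}(|x+B_t-\lambda t|\le K)\to\infty$ as $|x|\to\infty$; this is your observation that $t>1/(2\alpha)$ suffices. There are two slips, neither affecting the conclusion:
\begin{itemize}
\item Fixed shifts $y$ do not remove the drift, since the weight becomes $e^{-x^2/(2t)}e^{(\lambda+y/t)x}$. The shifted-window detour is therefore unnecessary and does not do what you claim.
\item In your closing remark the tails are reversed. The factor $e^{\lambda x}\ge 1$ only helps as $x\to+\infty$. As $x\to-\infty$, the quadratic margin $\alpha>1/(2t)$ is what absorbs the growing factor $e^{-\lambda x}$.
\end{itemize}
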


\begin{proof}
    In order to prove part-$(1)$, we first claim that for all $K >0$,
\begin{equation}\label{Eqn No blow up}
        \mathbb{P}\left(\int_{-\infty}^{\infty}\mathbb{P}(|x+B_{t}-\lambda t|\le K)\theta(dx) = +\infty  \right) \longrightarrow 0 \quad \text{as}\quad t\to\infty.
    \end{equation}
    
To this end, let $f \in C^{+}_{c}(\mathbb{R})$ such that $f(x)\equiv 1$ for $x \in [-K,K]$ and $f(x) = 0$ for $|x| \geq 2K$. Since $\theta \in $ $Dom_{\lambda}(\Pi)$, we have, for all $\rho>0$, as $t\to\infty$,
    \begin{equation}\label{apply-domain-condition}
        \mathbb{E}\left[e^{-\rho\langle f,\theta_t^\lambda \rangle} \right]\longrightarrow \mathbb{E}\left[e^{-\rho\langle f,\Pi \rangle} \right].
    \end{equation}
    Note that it is a priori possible that $\langle f,\theta_t^\lambda \rangle = +\infty$ with positive probability. But, since $\Pi$ is locally finite, we have $\langle f,\Pi \rangle<\infty$ almost surely. Then, by Lemma~\ref{Feller's lemma},
\[       \mathbb{P}\left(\langle f,\theta_t^\lambda \rangle = +\infty \right)\longrightarrow 0  \textrm{ as } t \to \infty.
\]

This, in turn, easily implies that 
\[\mathbb{P}\left(\theta^\lambda_t([-K,K]) = +\infty \right) \to 0 \textrm{ as } t \to \infty,\]
and 
\begin{equation}\label{U-thinning-zero}
        \mathbb{P}\left(U(\theta^\lambda_t)([-K,K]) = +\infty \right) \longrightarrow 0 \quad \text{as}\quad t \to\infty,
    \end{equation}
    
where $U(\theta_t^\lambda)$ is the thinning of $\theta^\lambda_t$ given by \eqref{def-thinning-U}. Note that 
    \begin{equation}\label{U-conditional-expectation}
        \mathbb{E}\left[U(\theta^\lambda_t)([-K,K]) \hspace{1mm}\bigl| \hspace{1mm}\theta \right] = \int_{-\infty}^{\infty}\mathbb{P}(|x+B_{t}-\lambda t| \le K)\theta(dx).
    \end{equation}

    Conditional on $\theta$, $U(\theta^\lambda_t)([-K,K])$ is a sum of independent Bernoulli random variables. Hence, by part $(b)$ of Lemma \ref{Lemma_bernoulli}, the event $\{ \int_{-\infty}^{\infty}\mathbb{P}(|x+B_{t}-\lambda t| \le K)\theta(dx) = +\infty\} $ implies $\{U(\theta^\lambda_t)([-K,K]) = +\infty\}$, that is,
     \begin{equation}
        \mathbb{P}\left(\int_{-\infty}^{\infty}\mathbb{P}(|x+B_{t}-\lambda t|\le K)\theta(dx) = +\infty  \right) \le \mathbb{P}\left( U(\theta_t^\lambda)([-K,K]) = +\infty \right).
    \end{equation}
The above implies \eqref{Eqn No blow up} using \eqref{U-thinning-zero}.

    Now in order to conclude that $\theta \in \mathcal{N}_2$ almost surely, we note that for a fixed $t$, as $|x|\to\infty$, 
      
      \begin{equation}
        e^{\frac{x^2}{t}}\mathbb{P}\left(|x+B_t-\lambda t|\le K \right) \longrightarrow \infty.
    \end{equation}
    Therefore, since $\theta$ is locally finite, $\int_{-\infty}^{\infty} e^{-\frac{x^2}{t}}\theta(dx) = +\infty$ implies $\int_{-\infty}^{\infty}\mathbb{P}(|x+B_t-\lambda t|\le K)\theta(dx) = +\infty$, which means
    \begin{equation}
        \mathbb{P}\left( \int_{-\infty}^{\infty}e^{-\frac{x^2}{t}}\theta(dx) = +\infty \right) \le \mathbb{P}\left( \int_{-\infty}^{\infty}\mathbb{P}(|x+B_t-\lambda t|\le K)\theta(dx) = +\infty \right).
    \end{equation}
    Hence, from \eqref{Eqn No blow up}, 
    \begin{equation}\label{no-blow-up-2}
        \mathbb{P}\left( \int_{-\infty}^{\infty}e^{-\frac{x^2}{t}}\theta(dx) = +\infty \right)\longrightarrow 0 \quad \text{as}\quad t\to\infty.
    \end{equation}

    Let $A_{n}:=\left\{\int_{-\infty}^{\infty}e^{-\frac{x^2}{n}}\theta(dx) < \infty \right\}$. As the events $\{A_{n}\}_{n\in\mathbb{N}}$ are decreasing, we infer from \eqref{no-blow-up-2} that $\mathbb{P}(A_{n}) \to 1$ as $n\to\infty$. Hence
    \begin{equation}
        \mathbb{P}\left(\int_{-\infty}^{\infty}e^{-\alpha x^2}\theta(dx) <\infty ,\forall \alpha >0\right) = \mathbb{P}\left( \bigcap_{n\ge1}A_{n}\right) = \lim_{n\to\infty}\mathbb{P}(A_{n}) = 1,
    \end{equation}
    which finishes the proof of the claim $\theta \in \mathcal{N}_2$ almost surely. \\

    The proof of part-$(2)$ uses similar arguments as in \cite{Shekhar_FP}, see section $3.4$ in \cite{Shekhar_FP}. In view of being short and self-contained, we repeat it here for the reader's convenience. 
    \textcolor{black}{Now $\theta \in \mathcal N_2$ a.s.} implies that $Z_t^a$ is almost surely finite and hence a well-defined real-valued random variable. Furthermore, it follows easily by continuity of $Z_t^a$ that for all $T>1$, $\sup_{t\in [1,T]} |Z_t^a| <\infty$ almost surely. Hence, it suffices to prove the tightness of $Z_t^a$ for $t>T$ with sufficiently large $T$. Also, from \eqref{N_2-is-fixed}, it follows that $\theta_t^\lambda$ is a well-defined point process. In particular, the thinning $U(\theta_t^\lambda)$ given by \eqref{def-thinning-U} is a well-defined point process. Hence, for $f \in C_c^+(\mathbb R)$ such that $f(x)=1$ for $x\in [-K,K]$ and $f(x)=0$ for $|x|\ge 2K$, $\langle f, \theta_t^\lambda \rangle$ is finite almost surely. Using \eqref{apply-domain-condition} again, we conclude that $\langle f, \theta_t^\lambda \rangle$ converges in distribution to $\langle f, \Pi \rangle$. In particular, $\langle f, \theta_t^\lambda \rangle$ is tight. Let us write $\mathcal{Z}_t^a = U(\theta_t^\lambda)([-K, K])$. Since $\mathcal{Z}_t^a \leq \langle f, \theta_t^\lambda \rangle$, it follows that $\mathcal{Z}_t^a$ is tight. Recall \eqref{U-conditional-expectation}, i.e. $Z_t^a = \mathbb{E}[\mathcal{Z}_t^a \hspace{0.8mm}\bigl | \hspace{0.8mm} \theta]$, and conditional on $\theta$, $\mathcal{Z}_t^a$ is a sum of independent Bernoulli random variables. The tightness of $Z_t^a$ follows by noting
\begin{align}
\mathbb{P}(Z_t^a\geq L)\leq & \mathbb{P}\left(Z_t^a\geq L; |\mathcal{Z}_t^a-Z_t^a|\geq \frac12 Z_t^a\right)+\mathbb{P}\left(Z_t^a\geq L; \mathcal{Z}_t^a\ge\frac12 Z_t^a\right)\nonumber\\
\leq & \mathbb{E}\left[\frac{4}{Z_t^a}1_{Z_t^a\geq L}\right]+\mathbb{P}\left(\mathcal{Z}_t^a\geq \frac12 L\right) \nonumber \\
\leq & \frac{4}{L} + \mathbb{P}\left(\mathcal{Z}_t^a\geq \frac12 L\right),\nonumber
\end{align}
where we applied part-$(a)$ of Lemma \ref{Lemma_bernoulli} conditional on $\theta$ in first line to second line in the above. Since $\mathcal{Z}_t^a$ is tight, the above computation implies that $Z_t^a$ is tight as well. \\

The proof of the part-$(3)$ is similar to the above: we apply the above arguments to the thinning $M(\theta_t^\lambda)$ given by \eqref{def-thinning-M} instead of $U(\theta_t^\lambda)$. 
\end{proof}

\begin{remark}
    The tightness of $Z_t^b$ is deemed to be a better estimate than the tightness of $Z_t^a$ since we expect $\mathbb{P}(|x + M_t - \lambda t| \leq K) \approx e^t \mathbb{P}(|x + B_t - \lambda t| \leq K)$. However, in view of Proposition \ref{Bramson-psi-estimate-for-one-sided-f}, we could only verify this for a certain range of $x$, e.g. for $\lambda>\sqrt{2}$ and $x\in [-bt, -at]$, $a,b>0$. In particular, we could not verify this for $x$ in a neighbourhood of $+\infty$, e.g. for $\{x >bt\}$. As such, the tightness of $Z_t^a$ will also be useful to us.  
\end{remark}

\subsection{Lacunary property of $\theta \in Dom_{\lambda}(\Pi)$}

\begin{proposition}\label{lacunary-prop}
 Let $\Pi$ be a fixed point of BBM with drift $\lambda \geq \sqrt{2}$ and let $\theta \in Dom_{\lambda}(\Pi)$ be a point process. Then, (recall Definition \ref{def-lacunary})
 \begin{enumerate}
     \item For $\lambda > \sqrt{2}$, $\theta$ is lacunary.
     \item For $\lambda = \sqrt{2}$, $\theta$ is exponentially-lacunary. 
 \end{enumerate}
\end{proposition}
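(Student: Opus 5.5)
The plan is to use the F-KPP representation of Lemma \ref{Formula-for-Laplace-of-theta_t} together with Bramson's convergence-to-one result, Lemma \ref{Bramson-converges-to-1}. The tightness estimates of Proposition \ref{crucial-tightness-prop} are not enough here: they only give boundedness of $\theta$ on windows, not vanishing. The mechanism to exploit is the one from the heuristics. An atom $x$ of $\theta$ whose BBM front $x+M_t-\lambda t$ lies well above $0$ at time $t$ puts a huge number of particles into $[-K,K]$, and this is incompatible with convergence to the locally finite limit $\Pi$.

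\textbf{Step 1 (which atoms are bad).} Fix $K>0$ and $f\in C_c^+(\mathbb R)$ with $f\equiv1$ on $[-K,K]$. For $\rho>0$ set $\phi_\rho(x)=1-e^{-\rho f(-x)}$. Then $\phi_\rho\ge 1-e^{-\rho}>0$ on $(-K,K)$, so Lemma \ref{Bramson-converges-to-1} gives, for fixed $\rho$ and $\delta>0$,
\[
\varepsilon_\rho(t):=\sup\Bigl\{1-u_{\phi_\rho}(t,\lambda t-x)\;:\;|\lambda t-x|\le \sqrt2 t-\bigl(\tfrac{3}{2\sqrt2}+\delta\bigr)\log t\Bigr\}\longrightarrow0 .
\]
Let $I_t$ denote the set of $x$ in this range. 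For $\lambda\ge\sqrt2$ it contains the interval $J_t:=[(\lambda-\sqrt2)t+C\log t,\;(\lambda+\sqrt2)t-C\log t]$ with $C=\frac{3}{2\sqrt2}+\delta$. Put $N_t:=\theta(J_t)$.

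\textbf{Step 2 (Laplace bound).} Every factor $1-u$ in Lemma \ref{Formula-for-Laplace-of-theta_t} lies in $[0,1]$, so
\[
\mathbb E\bigl[e^{-\rho\langle f,\theta_t^\lambda\rangle}\bigr]=\mathbb E\Bigl[\prod_{x\in\theta}\bigl(1-u_{\phi_\rho}(t,\lambda t-x)\bigr)\Bigr]\le \mathbb E\bigl[\varepsilon_\rho(t)^{N_t}\bigr]\le \mathbb P(N_t=0)+\varepsilon_\rho(t).
\]
Since $\theta\in Dom_\lambda(\Pi)$, the left-hand side converges to $\mathbb E[e^{-\rho\langle f,\Pi\rangle}]$. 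Hence $\liminf_{t}\mathbb P(N_t=0)\ge \mathbb E[e^{-\rho\langle f,\Pi\rangle}]$ for every $\rho>0$. Because $\Pi$ is locally finite, letting $\rho\downarrow0$ gives $\mathbb P(N_t=0)\to1$, i.e. $\theta(J_t)\overset{\mathbb P}{\to}0$.

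\textbf{Step 3 (conversion to Definition \ref{def-lacunary}).} For $\lambda>\sqrt2$, choose $\varepsilon>0$ small and set $c'=\lambda-\sqrt2+\varepsilon<d'=\lambda+\sqrt2-\varepsilon$. For $t$ large we have $[c't,d't]\subset J_t$, so $\theta([c't,d't])\overset{\mathbb P}{\to}0$, which is \eqref{lacunary-1}. For $\lambda=\sqrt2$, $J_t=[C\log t,\,2\sqrt2 t-C\log t]\supset[C\log t,\sqrt2 t]$ for $t$ large, so $\theta([C\log x,\sqrt2x])\overset{\mathbb P}{\to}0$ as $x\to\infty$, which is \eqref{lacunary-2} with $c=C$, $d=\sqrt2$. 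By Remark \ref{about-lacunary} the choice of constants is immaterial.

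The only delicate point is the interchange of limits in Step 2. Lemma \ref{Bramson-converges-to-1} gives uniformity in $x$ only for each fixed $\rho$, not uniformly in $\rho$. This is why I first let $t\to\infty$ at fixed $\rho$, and only afterwards let $\rho\to0$ using the local finiteness of $\Pi$. The bound $\prod(1-u)\le\varepsilon^{N_t}$ also remains valid when $\langle f,\theta_t^\lambda\rangle=+\infty$, since it is a statement about the $[0,1]$-valued product. So no a priori finiteness is required beyond $\theta\in\mathcal N_2$, which is provided by Proposition \ref{crucial-tightness-prop}.
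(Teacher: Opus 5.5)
Your proof is correct and is essentially the paper's argument. It uses the same test function $\rho f$, the same region $\{x : |\lambda t - x|\le \sqrt2 t - (\frac{3}{2\sqrt2}+\delta)\log t\}$ from Lemma \ref{Bramson-converges-to-1}, the same bound $\mathbb{E}[e^{-\rho\langle f,\theta_t^\lambda\rangle}]\le \varepsilon_\rho(t)+\mathbb{P}(\theta(J_t)=0)$, and the same order of limits ($t\to\infty$, then $\rho\downarrow 0$). Your closing appeal to $\theta\in\mathcal N_2$ is unnecessary, because $\theta(J_t)<\infty$ follows from local finiteness alone and the product bound holds in $[0,1]$ regardless.
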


\begin{proof}
 Let $K>0$ and $f\in C_c^+(\mathbb{R})$ such that $f(x)=1$ for $x\in [-K,K]$ and $f(x)=0$ for $|x|\geq 2K$. Then, for $\rho >0$, using Lemma \ref{Formula-for-Laplace-of-theta_t}, 

 \begin{equation}\label{Laplace-comp-using-F-KPP-2}
\mathbb{E}[e^{-\rho \langle f,\theta_t^\lambda\rangle}] = \mathbb{E}\biggl[\exp\biggl\{\int_{-\infty}^\infty\log(1- u_\phi(t, \lambda t -x))\theta(dx)\biggr\}\biggr],
\end{equation}
where $\phi(x) = 1 - e^{-\rho f(-x)}$. For $\delta >0$, let 
\begin{equation}\label{def-Q_t-delta}
    Q_t^\delta := \{ x\in \mathbb{R} \hspace{1mm}\bigl | \hspace{1mm} |\lambda t -x| \leq \sqrt{2}t - \bigl(\frac{3}{2\sqrt{2}} + \delta\bigr)\log t\}.
\end{equation} 
Then, by using Lemma \ref{Bramson-converges-to-1}, 
\[\alpha_t := \sup_{x\in Q_t^\delta} (1 - u_\phi(t, \lambda t -x)) \to 0 \] 
as $t\to \infty$. Hence,
\begin{align}
    \mathbb{E}[e^{-\rho \langle f,\theta_t^\lambda\rangle}] &= \mathbb{E}\biggl[\exp\biggl\{\int_{-\infty}^\infty\log(1- u_\phi(t, \lambda t -x))\theta(dx)\biggr\}\biggr] \\ \nonumber 
    & \leq \mathbb{E}\biggl[\exp\biggl\{\int_{Q_t^\delta}\log(1- u_\phi(t, \lambda t -x))\theta(dx)\biggr\}\biggr] \\ \nonumber
    &\leq \mathbb{E}\biggl[\exp\biggl\{\log\alpha_t \times \theta(Q_t^\delta)\biggr\}\biggr] \\ \nonumber
    &= \mathbb{E}\biggl[\exp\biggl\{\log\alpha_t \times \theta(Q_t^\delta)\biggr\} 1_{\theta(Q_t^\delta) \geq 1}\biggr] + \mathbb{P}(\theta(Q_t^\delta) = 0)\\ \nonumber
    &\leq  \alpha_t + \mathbb{P}(\theta(Q_t^\delta) = 0).\\ \nonumber
\end{align}
Taking $t\to \infty$ in the above, since $\alpha_t \to 0$ and $\theta \in Dom_{\lambda}(\Pi)$, we obtain 
\[ \liminf_{t\to \infty} \mathbb{P}(\theta(Q_t^\delta) = 0) \geq  \mathbb{E}[e^{-\rho \langle f,\Pi\rangle}],\]
and since $\Pi$ is locally finite almost surely, taking $\rho \to 0^+$, we conclude that $\mathbb{P}(\theta(Q_t^\delta) = 0) \to 1$, which is equivalent to $\theta(Q_t^\delta) \overset{\mathbb{P}}{\to} 0$ as $t\to \infty$. Finally, we note that for $Q_t^\delta$ given by \eqref{def-Q_t-delta}, if $\lambda >\sqrt{2}$,  $c>\lambda - \sqrt{2}$ and $d<\lambda + \sqrt{2}$, then $[ct, dt] \subset Q_t^\delta $ for all $t$ large enough. For $\lambda = \sqrt{2}$, if $c > \frac{3}{2\sqrt{2}} + \delta$ and $d< 2\sqrt{2}$, then $[c\log t , dt] \subset Q_t^\delta$ for all $t$ large enough. Hence, $\theta([ct, dt]) \overset{\mathbb{P}}{\to} 0 $ (resp. $\theta([c\log t, dt]) \overset{\mathbb{P}}{\to} 0$) for $\lambda >\sqrt{2}$ (resp. $\lambda = \sqrt{2}$). 

\end{proof}

\section{Proof of Theorem \ref{main-thm} and Theorem \ref{main-thm-2} for $\lambda >\sqrt{2}$.} \label{Section proof supercritical}

\textcolor{black}{In this section, we prove Theorem \ref{main-thm} and Theorem \ref{main-thm-2} for the case $\lambda>\sqrt{2}$. Following a strategy similar to that in \cite{Shekhar_FP}, we decompose the contribution of the initial point process into several regions and show that only particles in an $O(\sqrt{t})$-neighbourhood of the location $-\sqrt{\lambda^2-2}t$ contribute to the limiting point process.}

\textcolor{black}{In particular, the structural properties established in Section \ref{Section Properties theta in Don} allow us to control the contributions from different regions without imposing the additional assumptions on the initial point process required in \cite{Shekhar_FP} and \cite{Shekhar_Domain}.}


Let $a, b, L>0$ be fixed constants such that $b>\lambda + \sqrt{2}$ and $a < \lambda - \sqrt{2}$. For $\mu = \lambda + \sqrt{\lambda^2 -2 }$, write $\gamma = \mu - \lambda = \sqrt{\lambda^2 -2}$. We divide $\mathbb{R}$ into the following sets: 
\begin{align}
    \nonumber E_t^1 &= (-at, at), \\ \nonumber
    E_t^2 &= (-\infty, -bt)\cup (bt, \infty), \\ \label{Def-sets-E} 
    E_t^3 &= [at,bt], \\ \nonumber
    E_t^4 &= [-bt, -\gamma  t -L\sqrt{t}], \\ \nonumber
    E_t^5 &= [-\gamma t + L\sqrt{t}, -at], \\ \nonumber
    E_t^6 &= (-\gamma t - L\sqrt{t}, -\gamma t + L\sqrt{t}).  
\end{align}

For a fixed point $\Pi$ and a point process $\theta \in Dom_{\lambda}(\Pi)$, for $f\in C_c^+(\mathbb{R})$, Lemma \ref{Formula-for-Laplace-of-theta_t} gives  

\begin{equation}\label{Laplace-comp-using-F-KPP-3}
\mathbb{E}[e^{-\langle f,\theta_t^\lambda\rangle}] = \mathbb{E}\biggl[\exp\biggl\{\int_{-\infty}^\infty\log(1- u_\phi(t, \lambda t -x))\theta(dx)\biggr\}\biggr],
\end{equation}
where $\phi(x) = 1-e^{-f(-x)}$. We write 
\begin{equation}
    \int_{-\infty}^\infty\log(1- u_\phi(t, \lambda t -x))\theta(dx) = \sum_{i=1}^6 I_t^i,
\end{equation}
where
\begin{equation}
   I_t^i :=  \int_{E_t^i}\log(1- u_\phi(t, \lambda t -x))\theta(dx).
\end{equation}

\vspace{3mm}

We show that in the limit $t \to \infty$,  $I_t^1, I_t^2, I_t^3 \overset{\mathbb{P}}{\to} 0 $. Also, $I_t^4, I_t^5 \overset{\mathbb{P}}{\to} 0$ as $t \to \infty$ and then $L\to \infty$. Finally, we show that $I_t^6$ will converge in distribution to a random variable as $t \to \infty$ and then $L\to \infty$.

\subsection{Proof of convergence $I_t^1, I_t^2 \overset{\mathbb{P}}{\to} 0$.}

We first record the following lemma, which will be used in the proof.

\begin{lemma}\label{Lemma Ratio 1}
Let $K>0$ and $E_t=E_t^1 \text{ or } E_t^2$. Then, there exists some $u \ge 0$ \textcolor{black}{(depending on $t$)} such that
    \begin{equation}\label{Eqn ratio 1}
        \sup_{x \in E_t} \frac{e^t \mathbb P(|x+B_t-\lambda t|\le K)}{\mathbb P(|x+B_u-\lambda u|\le K)} \longrightarrow 0
    \end{equation}
    as $t \to \infty.$ 
    In particular, 
    \begin{equation}\label{Eqn implies by ratio 1}
        \sup_{x\in E_t} e^t \mathbb P(|x+B_t-\lambda t|\le K) \to 0
    \end{equation}
    as $t \to \infty$. \eqref{Eqn ratio 1} and \eqref{Eqn implies by ratio 1} hold for $\lambda =\sqrt{2}$ and $E_t = E_{t}^2 = \{|x| > bt \}$ for any $b>2\sqrt{2}$ as well.

\end{lemma}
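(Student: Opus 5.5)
The plan is to reduce \eqref{Eqn ratio 1} to an explicit comparison of Gaussian exponents, choosing $u = st$ with a fixed proportion $s$ that depends only on the region $E_t$. The only input is the elementary two-sided Gaussian estimate for $z$ with $|z|\geq 2K$:
\[
\frac{c}{\sqrt{u}}\, e^{-\frac{(|z|+K)^2}{2u}} \;\le\; \mathbb{P}(|z+B_u|\le K) \;\le\; \frac{C}{\sqrt{u}}\, e^{-\frac{(|z|-K)^2}{2u}},
\]
with $c,C$ depending only on $K$. For $|z|\le 2K$ the probability is of order $1/\sqrt{u}$, and that case does not arise for $x\in E_t$ when $t$ is large. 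We apply the upper bound with $z = x-\lambda t$, $u=t$ in the numerator, and the lower bound with $z=x-\lambda u$ in the denominator.

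Writing $x = yt$ and $u=st$, the leading part of the logarithm of the ratio is $tD(y,s)$. Here
\[
D(y,s) = 1 - \frac{(\lambda-y)^2}{2} + \frac{(\lambda s - y)^2}{2s} = 1 - \frac{(1-s)}{2}\Bigl(\lambda^2 - \frac{y^2}{s}\Bigr).
\]
The corrections are of three kinds. The $K$-cross terms are bounded by $O(K|y|)$, with no factor $t$. The $K^2/t$ terms are bounded. The prefactor ratio $\sqrt{u/t}=\sqrt{s}$ is a constant. The aim is therefore to choose $s$ so that $D(y,s)\le -\eta<0$ uniformly on the relevant range of $y$, and so that any growth in $|y|$ is dominated by a $-c\,y^2 t$ term.

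For $E_t^1$ we have $|y|<a<\lambda-\sqrt{2}$. We take $s=a/\lambda<1$. Since $1-s>0$, the quantity $D$ is increasing in $y^2$, so its maximum over $|y|<a$ is
\[
D(a,a/\lambda) = 1-\frac{(\lambda-a)^2}{2} < 0 .
\]
Here $y$ is bounded, so the $O(K|y|)$ corrections are $O(1)$. The ratio is then $O(e^{-\eta t})$ uniformly, i.e.\ we take $u = at/\lambda$.

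For $E_t^2$ we have $|y|>b>\lambda+\sqrt{2}$. We take $s=b/\lambda>1$. Now $1-s<0$, so $D$ is decreasing in $y^2$, and on $|y|\ge b$
\[
D(y,b/\lambda) \le 1 - \frac{(b-\lambda)^2}{2} - c(y^2-b^2), \qquad c=\frac{1-1/s}{2}>0 .
\]
The unbounded $O(K|y|)$ corrections are then absorbed by the $-c\,y^2 t$ term for $t$ large, which gives the uniform decay with $u = bt/\lambda$. The critical case $\lambda=\sqrt2$ with $b>2\sqrt2$ is identical, because $b-\sqrt{2}>\sqrt{2}$ gives $1-(b-\sqrt2)^2/2<0$.

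Finally, \eqref{Eqn implies by ratio 1} follows immediately from \eqref{Eqn ratio 1}, since the denominator there is at most $1$. The main obstacle, and the only delicate point, is the uniformity over the unbounded set $E_t^2$. There one must check carefully that the lower-bound error in the denominator, $e^{-K|x-\lambda u|/u}$, grows only linearly in $|y|$, so that it is beaten by the quadratic gain $(1/s-1)y^2t/2$. The rest is routine bookkeeping.
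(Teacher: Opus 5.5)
Your argument is correct and is essentially the paper's. You make the same choice of $u$, namely $u\approx at/\lambda$ (resp.\ $bt/\lambda$), so that $\lambda u$ sits at the extreme point of $E_t$. You also rely on the same cancellation of the linear terms, which leaves the exponent $t+\frac{\lambda^2}{2}(u-t)+\frac{x^2}{2}\bigl(\frac1u-\frac1t\bigr)$. The only difference is that the paper compares the two Gaussian densities pointwise for $y\in[-K,K]$, so no $K$-corrections arise, whereas you bound the numerator and denominator separately and absorb the $O(K(1+|y|))$ errors.

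One small slip: the case $|x-\lambda u|\le 2K$ does occur in the denominator, at $x\approx at$ or $x\approx bt$, precisely because of how $u$ is chosen. This causes no harm, since your lower bound $\frac{c}{\sqrt u}e^{-(|z|+K)^2/2u}$ holds for every $z$.
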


\begin{proof}
    By definition
    \begin{equation*}
        \begin{aligned}
            e^t \mathbb P(|x+B_t-\lambda t|\le K)=\int_{-K}^K \frac{e^t}{\sqrt{2\pi t}} e^{-\frac{(y-x+\lambda t)^2}{2t}}dy, \\
            \mathbb P(|x+B_u-\lambda u|\le K)=\int_{-K}^K \frac{1}{\sqrt{2\pi u}} e^{-\frac{(y-x+\lambda u)^2}{2u}}dy.
        \end{aligned}
    \end{equation*}
    We show that the ratio of the two densities above goes to zero uniformly over $x\in E_t$, $y\in[-K, K]$ for some choice of $u$, i.e.
    \begin{equation} \label{Eqn_limit in Lemma ratio 1}
        \sup_{x \in E_t, y\in [-K,K]} \sqrt{\frac{u}{t}} e^t e^{\frac{(y-x+\lambda u)^2}{2u}-\frac{(y-x+\lambda t)^2}{2t}} \longrightarrow 0
    \end{equation}
    as $t \to \infty.$
    Note that
    \begin{equation*}
        e^t e^{\frac{(y-x+\lambda u)^2}{2u}-\frac{(y-x+\lambda t)^2}{2t}}=e^{t+\frac{\lambda^2}{2}(u-t)+(y-x)^2\left[\frac{1}{2u}-\frac{1}{2t}\right]}.
    \end{equation*}

    For $E_t=E_t^1$, we pick $u=\frac{L_t}{\lambda}$ where $L_t:=\sup\limits_{x \in E_t^1, y \in [-K,K]} |y-x|$. Note that $u<t$ and 
    \begin{equation*}
        \begin{aligned}
            e^{t+\frac{\lambda^2}{2}(u-t)+(y-x)^2\left[\frac{1}{2u}-\frac{1}{2t}\right]} &\le e^{t+\frac{\lambda^2}{2}(u-t)+L_t^2 \left[\frac{1}{2u}-\frac{1}{2t}\right]} \\
            &= e^{t+\lambda L_t-\frac{L_t^2}{2t}-\frac{\lambda^2 t}{2}} \\
            &\leq e^{-\delta t},
        \end{aligned}
    \end{equation*}
   for some $\delta >0$, where in the last inequality we have used the fact that for $L_t = at +K $ for some $a <\lambda -\sqrt{2}$. This implies \eqref{Eqn_limit in Lemma ratio 1}. Similarly, for $E_t=E_t^{2}$, we note that the above estimate continues to hold by taking $u=\frac{\widetilde{L}_t}{\lambda} >t $ where ${\widetilde L}_t:=\inf\limits_{x \in E_t^2, y \in [-K,K]} |y-x| = bt-K $ for some $b>\lambda + \sqrt{2}$. Also note that this estimate holds for $\lambda = \sqrt{2}$ and $E_t = E_t^2 = \{|x| >bt\}$ for any $b>2\sqrt{2}$ as well.   
\end{proof}

Now, let $f\in C_c^+(\mathbb{R})$ and $K>0$ be such that $f(x) =0$ for $|x| \geq K$. Using \eqref{upper-bound-for-F-KPP-using-many-to-one}, we obtain that 

\begin{align}
    \nonumber u_\phi(t, \lambda t -x) &\leq  e^t \mathbb{E}[f(x+ B_t - \lambda t)] \\
     & \lesssim e^t \mathbb{P}(|x+ B_t - \lambda t| \leq K). \label{u<e^tP(B)-bound}
\end{align}
By \eqref{Eqn implies by ratio 1}, $u_\phi(t, \lambda t -x) \to 0 $ as $t\to \infty$ uniformly over $x\in E_t^1$ and $x\in E_t^2$. Hence, for $i=1,2$, 

\begin{equation}
    I_t^i =  \int_{E_t^i}\log(1- u_\phi(t, \lambda t -x))\theta(dx) = -(1+ o_t(1)) \int_{E_t^i}u_\phi(t, \lambda t -x)\theta(dx).
\end{equation}
Furthermore, using \eqref{u<e^tP(B)-bound}, 

\begin{align}
    \label{final-step-for-i=1,2} \int_{E_t^i}u_\phi(t, \lambda t -x)\theta(dx) &\lesssim \int_{E_t^i}e^t \mathbb{P}(|x+ B_t - \lambda t| \leq K) \theta(dx) \\
     \nonumber &\leq \sup_{x\in E_t^i} \frac{e^t \mathbb{P}(|x+ B_t - \lambda t| \leq K)}{\mathbb{P}(|x+ B_u - \lambda u| \leq K)} \int_{E_t^i}\mathbb{P}(|x+ B_u - \lambda u| \leq K) \theta(dx),
\end{align}
where we have picked $u$ from Lemma \ref{Lemma Ratio 1}. Then, from \eqref{Eqn ratio 1} and the tightness of $Z_t^a = \int_{-\infty}^{+\infty} \mathbb{P}(|x + B_t - \lambda t| \leq K) \theta(dx)$ given by Proposition \ref{crucial-tightness-prop}, we conclude that 
\[ \int_{E_t^i}u_\phi(t, \lambda t -x)\theta(dx) \overset{\mathbb{P}}{\to} 0,\]
which in turn implies $I_t^i \overset{\mathbb{P}}{\to} 0$ for $i=1,2$ as $t \to \infty$.

\subsection{Proof of convergence $I_t^3 \overset{\mathbb{P}}{\to} 0$.}

The proof follows immediately from Proposition \ref{lacunary-prop} as follows. Since $\theta$ is an integer-valued measure, 

\begin{align}
    \nonumber\mathbb{P}(I_t^3<0) &= \mathbb{P}\biggl( \int_{[at,bt]}\log(1- u_\phi(t, \lambda t -x))\theta(dx) <0\biggr) \\
    \nonumber &\leq \mathbb{P}(\theta([at, bt]) >0) \\
    \nonumber &= \mathbb{P}(\theta([at, bt]) \geq 1).
\end{align}

Then, since $\theta$ is lacunary, using Remark \ref{about-lacunary}, $\mathbb{P}(\theta([at, bt]) \geq 1) \to 0$. Hence, $\mathbb{P}(I_t^3>0) \to 0$ as $t\to \infty$.

\subsection{A reformulation of the tightness property of $\theta \in Dom_{\lambda}(\Pi)$ for $\lambda > \sqrt{2}$.}

Before proving $I_t^4, I_t^5 \overset{\mathbb{P}}{\to} 0$, we record a consequence of the tightness property $\theta \in Dom_{\lambda}(\Pi)$ given by Proposition \ref{crucial-tightness-prop}. 

\begin{proposition}\label{replace-M-by-explicit-expression-in-tightness}
    Let $\Pi$ be a fixed point of BBM with drift $\lambda > \sqrt{2}$ and $\theta \in Dom_{\lambda}(\Pi)$. Then, the family of random variables $Z_t$ given by 

\begin{equation}
  Z_t := \int_{- \infty}^0 \frac{1}{\sqrt{t}}e^{(1-\frac{\lambda^2}{2})t}e^{\lambda x}e^{-\frac{x^2}{2t}}\theta(dx) 
\end{equation}
is tight in $t\geq 1$. 
\end{proposition}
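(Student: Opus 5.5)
The plan is to compare the integrand of $Z_t$ with the two tight quantities $Z_t^a, Z_t^b$ from Proposition \ref{crucial-tightness-prop}. The starting point is the identity
\[
\frac{1}{\sqrt{t}}e^{(1-\frac{\lambda^2}{2})t}e^{\lambda x}e^{-\frac{x^2}{2t}} = \frac{1}{\sqrt t}\,e^{t}\,e^{-\frac{(x-\lambda t)^2}{2t}}.
\]
Hence for fixed $K>0$ and $t\geq 1$ the integrand is comparable, up to constants depending only on $K,\lambda$, to $e^t\mathbb{P}(|x+B_t-\lambda t|\le K)$. This holds because $e^{-(x-\lambda t+y)^2/2t}\asymp e^{-(x-\lambda t)^2/2t}$ uniformly in $|y|\le K$ when $|x-\lambda t|\lesssim t$. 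For $|x|\gg t$ we only need the upper bound direction, which I will get from a slightly different $K$-window or treat directly.

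Fix $B>\lambda+\sqrt 2$ and split $\int_{-\infty}^0=\int_{-\infty}^{-Bt}+\int_{-Bt}^0$.

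\emph{Far region $x<-Bt$.} Here $x\in E_t^2$. By the comparison above (in the upper-bound direction) and \eqref{final-step-for-i=1,2}, this piece is at most
\[
C\sup_{x\in E_t^2}\frac{e^t\mathbb{P}(|x+B_t-\lambda t|\le K)}{\mathbb{P}(|x+B_u-\lambda u|\le K)}\cdot Z^a_u,
\]
where $u=u(t)$ is chosen as in Lemma \ref{Lemma Ratio 1}. The ratio tends to $0$ and $Z^a_u$ is tight, so this piece tends to $0$ in probability. In particular it is tight.

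\emph{Main region $x\in[-Bt,0]$.} Use the representation through the maximum:
\[
\mathbb{P}(|x+M_t-\lambda t|\le K)=u_M(t,\lambda t-x-K)-u_M(t,\lambda t-x+K),
\]
which follows from \eqref{F-KPP-Formula-For-M} up to endpoint conventions. Set $z=(\lambda-\sqrt2)t-x$, so that $\lambda t-x=\sqrt2 t+z$ and $z\in[(\lambda-\sqrt2)t,(\lambda-\sqrt2+B)t]$. This is a compact range of the form $[\alpha t,\beta t]$ with $\alpha>0$ because $\lambda>\sqrt2$. Lemma \ref{lemma_upper and lower for F-KPP}(b), with $K$ taken large enough, gives
\[
\mathbb{P}(|x+M_t-\lambda t|\le K)\ge C_1 t^{-1/2}e^{-\sqrt2 z-z^2/2t}
\]
for $t\ge t_0$. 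A direct expansion gives $\sqrt2 z+z^2/2t=(\frac{\lambda^2}{2}-1)t-\lambda x+\frac{x^2}{2t}$, so the lower bound is exactly $C_1$ times the integrand of $Z_t$. Therefore $\int_{-Bt}^0(\cdots)\,\theta(dx)\le C_1^{-1}Z_t^b$ for $t\ge t_0$, and this is tight by Proposition \ref{crucial-tightness-prop}(3).

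\emph{Bounded times $t\in[1,t_0]$.} Since $\theta\in\mathcal N_2$ almost surely, the integrand is dominated by $Ce^{-x^2/2t_0}$ uniformly in $t\in[1,t_0]$. Hence $\sup_{t\in[1,t_0]}Z_t<\infty$ almost surely, which gives tightness on this compact time range.

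The main obstacle is the main-region step. The key point there is that the lower bound in Lemma \ref{lemma_upper and lower for F-KPP}(b) holds only on linear windows $z\in[\alpha t,\beta t]$ and only for large $K$. That is why the far region must be handled separately with the Brownian thinning $Z^a$, and why the argument genuinely uses $\lambda>\sqrt2$, so that $x\le 0$ corresponds to $z\ge(\lambda-\sqrt2)t$ being linear in $t$.
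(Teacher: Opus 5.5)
Your proposal is correct and follows essentially the same route as the paper. The paper also splits at $-bt$ with $b>\lambda+\sqrt2$, sends the far piece to zero via the Brownian comparison, Lemma \ref{Lemma Ratio 1} and tightness of $Z^a$, bounds the piece on $[-bt,0]$ by $Z_t^b$ using the lower bound of Lemma \ref{lemma_upper and lower for F-KPP}(b) together with \eqref{z-to-x-transformation}, and handles bounded times through $\theta\in\mathcal N_2$. The step you left vague, the upper-bound direction for $|x|\gg t$, is done in the paper by integrating only over the half-window $y\in[-K,0]$: there the cross term $xy/t$ is nonnegative, so the integrand of $Z_t$ is $\lesssim e^t\mathbb{P}(|x+B_t-\lambda t|\le K)$ uniformly over all $x<0$.
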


\begin{proof}
Using \eqref{final-step-for-i=1,2}, we have for $b>\lambda + \sqrt{2}$, 

\[ \int_{-\infty}^{-bt} e^t \mathbb{P}(|x + B_t - \lambda t| \leq K)\theta(dx)\overset{\mathbb{P}}{\to} 0\]
as $t\to \infty$. Also, uniformly over $x<0$,

\begin{align} \label{Brownian-in-negative-part-actually-gives-sharp-bound-super-critical}
    e^t \mathbb{P}(|x + B_t - \lambda t| \leq K) &= \frac{e^t}{\sqrt{2\pi t}} \int_{-K}^K e^{-\frac{(y + \lambda t -x)^2}{2t}}dy \\ 
     \nonumber &=  \frac{1}{\sqrt{2\pi t}}e^{(1-\frac{\lambda^2}{2})t}e^{\lambda x}e^{-\frac{x^2}{2t}} \int_{-K}^K e^{-\frac{y^2}{2t}-y\lambda + \frac{xy}{t}}dy \\ 
     \nonumber & \geq \frac{1}{\sqrt{2\pi t}}e^{(1-\frac{\lambda^2}{2})t}e^{\lambda x}e^{-\frac{x^2}{2t}} \int_{-K}^0 e^{-\frac{y^2}{2t}-y\lambda}dy \\
    \nonumber&\gtrsim  \frac{1}{\sqrt{t}}e^{(1-\frac{\lambda^2}{2})t}e^{\lambda x}e^{-\frac{x^2}{2t}}.
\end{align}

This implies that 
\begin{equation}\label{Z_t^1-goes-to-zero-for-tightness}
    \int_{-\infty}^{-bt} \frac{1}{\sqrt{t}}e^{(1-\frac{\lambda^2}{2})t}e^{\lambda x}e^{-\frac{x^2}{2t}} \theta(dx)\overset{\mathbb{P}}{\to} 0.
\end{equation}
Since $\theta \in \mathcal{N}_2$ almost surely, clearly for all $t_0 >1$, 
\[ \sup_{1\leq t \leq t_0} \int_{-\infty}^{-bt} \frac{1}{\sqrt{t}}e^{(1-\frac{\lambda^2}{2})t}e^{\lambda x}e^{-\frac{x^2}{2t}} \theta(dx) <\infty \textrm{ almost surely.} \]
Hence, the convergence \eqref{Z_t^1-goes-to-zero-for-tightness} implies the tightness of $\int_{-\infty}^{-bt} \frac{1}{\sqrt{t}}e^{(1-\frac{\lambda^2}{2})t}e^{\lambda x}e^{-\frac{x^2}{2t}} \theta(dx)$ in $t\geq 1$.

For the tightness of $\int_{-bt}^{0} \frac{1}{\sqrt{t}}e^{(1-\frac{\lambda^2}{2})t}e^{\lambda x}e^{-\frac{x^2}{2t}} \theta(dx)$, we note using \eqref{F-KPP-Formula-For-M}, 
\[ \mathbb{P}(|x + M_t -\lambda t| \leq K) = u_M(t, z + \sqrt{2}t - K) -u_M(t, z + \sqrt{2}t + K), \]
where $z = (\lambda - \sqrt{2})t - x$. For $x \in [-  bt, 0]$, $z \in [(\lambda -\sqrt{2})t, (\lambda -\sqrt{2} + b) t]$. Hence, using \eqref{quant-bound-for-super-critical-for-difference}, for $K$ large enough and $t\geq t_0$,  
\begin{equation} \label{replace-M-bound-by-its-lower-explicit-integral}
    \mathbb{P}(|x + M_t -\lambda t| \leq K) \gtrsim  \frac{1}{\sqrt{t}} e^{-\sqrt{2}z} e^{-\frac{z^2}{2t}}.
\end{equation}
Note that for $z = (\lambda - \sqrt{2})t - x$, 
\begin{equation}\label{z-to-x-transformation}
        \frac{1}{\sqrt{t}} e^{-\sqrt{2}z} e^{-\frac{z^2}{2t}} = \frac{1}{\sqrt{t}}e^{(1-\frac{\lambda^2}{2})t}e^{\lambda x}e^{-\frac{x^2}{2t}}.
    \end{equation}
Then, the tightness of $Z_t^b$ given by Proposition \ref{crucial-tightness-prop} completes the proof. 

\end{proof}

\subsection{Proof of convergence $I_t^4, I_t^5 \overset{\mathbb{P}}{\to} 0$.} \label{subsection-for-I4/I5}

Using Lemma \ref{when-does-u-go-to-zero}, we know that $u_\phi(t, \lambda t -x) \to 0$ uniformly over $x \leq 0$. Hence,  
\begin{equation}\label{we-can-remove-log}
     \int_{[-bt, -at]}\log(1- u_\phi(t, \lambda t -x))\theta(dx) = -(1+ o_t(1)) \int_{[-bt, -at]}u_\phi(t, \lambda t -x)\theta(dx).
\end{equation}
Also, by Lemma \ref{lemma_upper and lower for F-KPP}-$(a)$ and relation \eqref{z-to-x-transformation}, we have that for $x\in [-bt, -at]$, 
\begin{equation}
    u_\phi(t, \lambda t -x) \lesssim \frac{1}{\sqrt{t}}e^{(1-\frac{\lambda^2}{2})t}e^{\lambda x}e^{-\frac{x^2}{2t}}.
\end{equation}
Therefore, it suffices to prove that for $i=4,5,$  as $t\to \infty$ and then $L\to \infty$,  

\begin{equation}\label{resolved-step-for-4-and-5}
    \int_{E_t^i} \frac{1}{\sqrt{t}}e^{(1-\frac{\lambda^2}{2})t}e^{\lambda x}e^{-\frac{x^2}{2t}} \theta(dx) \overset{\mathbb{P}}{\to} 0.
\end{equation}

\vspace{3mm}
To this end, choose $u = t + \frac{L\sqrt{t}}{\sqrt{\lambda^2 -2}}$ for $i=4$ and $u = t -\frac{L\sqrt{t}}{\sqrt{\lambda^2 -2}}$ for $i=5$. For this choice of $u$, note that  $|x| =-x \geq \gamma t + L\sqrt{t} = \gamma u$ for $x\in E_t^4$ and $|x| =-x \leq \gamma t - L\sqrt{t} = \gamma u$ for $x\in E_t^5$. Hence, uniformly over $x\in E_t^4$ or $x\in E_t^5$, 

\begin{equation}
    \begin{aligned}
        \frac{\frac{1}{\sqrt{t}}e^{(1-\frac{\lambda^2}{2})t}e^{\lambda x}e^{-\frac{x^2}{2t}}}{\frac{1}{\sqrt{u}}e^{(1-\frac{\lambda^2}{2})u}e^{\lambda x}e^{-\frac{x^2}{2u}}} 
        &= \sqrt{\frac{u}{t}}e^{(\frac{\lambda^2}{2}-1)(u-t) + \frac{x^2}{2}(\frac{1}{u}-\frac{1}{t})}\\
        &\lesssim e^{(\frac{\lambda^2}{2}-1)(u -t) + \frac{(\lambda^2-2)u^2}{2}\frac{t-u}{tu}}\\
        &=e^{-\frac{L^2}{2}}.
    \end{aligned}
    \end{equation}

Therefore, for $i=4,5$, recalling $Z_t$ as defined in Proposition \ref{replace-M-by-explicit-expression-in-tightness}, we have  
\begin{equation}\label{compare for I_i}
    \int_{E_t^i}\frac{1}{\sqrt{t}}e^{(1-\frac{\lambda^2}{2})t}e^{\lambda x}e^{-\frac{x^2}{2t}}\theta(dx) \lesssim e^{-\frac{L^2}{2}} \int_{-\infty}^0 \frac{1}{\sqrt{u}}e^{(1-\frac{\lambda^2}{2})u}e^{\lambda x}e^{-\frac{x^2}{2u}} \theta(dx) = e^{-\frac{L^2}{2}} Z_u. 
\end{equation}
Since $Z_u$ is tight by Proposition \ref{replace-M-by-explicit-expression-in-tightness}, by taking $L\to \infty$, this implies \eqref{resolved-step-for-4-and-5} which completes the proof.

\subsection{Convergence of $I_t^6$.}

Since $I_t^i \overset{\mathbb{P}}{\to} 0$ for $1\leq i\leq 5$, it follows using  \eqref{Laplace-comp-using-F-KPP-3} and \eqref{we-can-remove-log} that for a fixed point $\Pi$, $\theta \in Dom_{\lambda}(\Pi)$, and $f\in C_c^+(\mathbb{R})$, 

\begin{equation}\label{resolving-to-set-E-6}
\mathbb{E}\biggl[\exp\biggl\{-\int_{E_t^6} u_\phi(t, \lambda t -x)\theta(dx)\biggr\}\biggr] \to \mathbb{E}[e^{-\langle f,\Pi\rangle}]
\end{equation}
as $t\to \infty$ and then $L\to \infty$. 

Now, define 
\begin{equation}\label{def-Z-t-L}
    Z_{t,L} := \int_{E_t^6} \frac{1}{\sqrt{t}}e^{(1-\frac{\lambda^2}{2})t}e^{\lambda x}e^{-\frac{x^2}{2t}} \theta(dx).
\end{equation}
For each fixed $L>0$, $E_t^6 \subset (-\infty,0)$ for all sufficiently large $t$, and hence $Z_{t,L} \le Z_{t}$. Therefore, using Proposition \ref{replace-M-by-explicit-expression-in-tightness}, $Z_{t,L}$ is tight in the regime under consideration. The tightness implies subsequential convergence in distribution. Let $\tau = (t_n, L_m)$ be a sequence such that $t_n \to \infty$, $L_m \to \infty$ and 
\begin{equation}\label{Z-tau-def}
    Z_{t_n,L_m} \overset{d}{\to} Z^{\tau}
\end{equation}
as $n\to \infty$ and then $m\to \infty$, where $Z^\tau$ is a non-negative random variable (the limit may a priori depend on the sequence $\tau$). 

Note that for $x\in E_t^6$, $z = (\lambda -\sqrt{2}) t -x = \alpha(\lambda) t + o_t(1)$, with 
\[ \alpha(\lambda) = \lambda + \sqrt{\lambda^2 -2} - \sqrt{2} = \mu -\sqrt{2}.\] 
Hence,  using Proposition \ref{C(a) definition} and \eqref{z-to-x-transformation}, for $f$ of the type \eqref{Form of f}, we have 

\begin{equation}
    u_\phi(t, \lambda t -x) \sim C(f, \alpha(\lambda)) \frac{1}{\sqrt{t}}e^{(1-\frac{\lambda^2}{2})t}e^{\lambda x}e^{-\frac{x^2}{2t}}
\end{equation}
uniformly over $x\in E_t^6$. Since $Z_{t_n, L_n} \overset{d}{\to } Z^\tau$, it follows that for all $f$ of type \eqref{Form of f},
\begin{equation}\label{u-integral-converges-to-Z-tau}
    \int_{E_t^6} u_\phi(t, \lambda t-x) \theta(dx) \overset{d}{\to} Z^{\tau} C(f, \alpha(\lambda))
\end{equation} 
along the subsequence $\tau$. 

Now, similarly to \eqref{theta_t-def}, let us define 
\begin{equation} \label{theta-t-L-def}
        \theta_{t,L}^\lambda:= \sum\limits_{i \in I, x_i \in E_t^6} \sum\limits_{k=1}^{n^i(t)} \delta_{x_i+\chi_k^i(t)-\lambda t}. 
    \end{equation}
    
Note that since $\theta_{t,L}^\lambda$ is starting from $\theta|_{E_t^6}$, which has a finite top particle, \textcolor{black}{and $\theta \in \mathcal N_2$}, Lemma \ref{Proposition M_2} gives that $\theta_{t,L}^\lambda$ also has a finite top particle almost surely. Hence, $\langle f,\theta_{t,L}^\lambda \rangle <\infty$ a.s. for $f$ of type \eqref{Form of f}. Then, it follows using \eqref{Laplace-comp-using-F-KPP}, \eqref{convegence-zero-for-x>-log(t)},\eqref{we-can-remove-log}, and \eqref{u-integral-converges-to-Z-tau} that for $f$ of the type \eqref{Form of f}, 
\begin{equation}
    \mathbb{E}[e^{-\langle f, \theta_{t,L}^\lambda \rangle}] \to  \mathbb{E}[e^{-Z^{\tau} C(f, \alpha(\lambda))}]
\end{equation}
along subsequence $\tau$. By \eqref{supercritical-E-Laplace-fromula-with-shift}, $\mathbb{E}[e^{-Z^{\tau} C(f, \alpha(\lambda))}]$ is the Laplace transform of $\langle f, \widetilde{\mathcal E}_\infty^\lambda(\cdot -S^\tau) \rangle$ where $S^\tau := \log (Z^\tau)/\mu$ is a $[-\infty, \infty)$-valued random variable ($Z^\tau$ can be zero with positive probability). It hence follows that 
\begin{equation}\label{convergence-conclusion}
    \langle f, \theta_{t,L}^\lambda \rangle \overset{d}{\to} \langle f, \widetilde{\mathcal E}_\infty^\lambda(\cdot -S^\tau) \rangle
\end{equation} 
for all $f$ of type \eqref{Form of f}.

Observe that functions of the form $1_{[c, d)}(x)$ for $-\infty< c<d <\infty$ can be written as difference of functions of type \eqref{Form of f}. Therefore, \eqref{convergence-conclusion} holds for functions $f$ in the span of $\{1_{[c, d)}(x) | -\infty< c<d <\infty\}$. It then follows from standard arguments that \eqref{convergence-conclusion} holds for $f\in C_c^+(\mathbb{R})$ as well. This in turn implies using \eqref{Laplace-comp-using-F-KPP-3} and \eqref{we-can-remove-log} again that for $f\in C_c^+(\mathbb{R})$,

\begin{equation}\label{convergence-of-Laplace-to-the-limit-we-want}
\mathbb{E}\biggl[\exp\biggl\{-\int_{E_t^6} u_\phi(t, \lambda t -x)\theta(dx)\biggr\}\biggr] \to \mathbb{E}[e^{-\langle f, \widetilde{\mathcal E}_\infty^\lambda(\cdot -S^\tau)\rangle}]
\end{equation}
along the subsequence $\tau$. Then, by comparing this to \eqref{resolving-to-set-E-6}, we conclude that 
\begin{equation}\label{equality-of-Pi-to-E}
    \Pi \overset{d}{=} \widetilde{\mathcal E}_\infty^\lambda(\cdot -S^\tau).
\end{equation}

Since the law of $\Pi$ does not depend on the subsequence $\tau$, it follows that $S^{\tau} \overset{d}{=} S$ for some $[-\infty, \infty)$ valued random variable $S$ for all subsequences $\tau$. As a result, we get
\begin{equation}\label{equality-of-Pi-to-E-without-tau}
    \Pi \overset{d}{=} \widetilde{\mathcal E}_\infty^\lambda(\cdot -S).
\end{equation}

Furthermore, this implies that all subsequential limits $Z^\tau$ given by \eqref{Z-tau-def} have the same law as $Z:= e^{\mu S}$, which in turn implies that for $Z_{t,L}$ given by \eqref{def-Z-t-L}, 
\begin{equation}\label{Z-t-converges-to-Z}
    Z_{t, L} \overset{d}{\to} Z
\end{equation}
as $t\to \infty$ and then $L\to \infty$.

\begin{proof}[Proof of Theorem \ref{main-thm} and Theorem \ref{main-thm-2} for $\lambda >\sqrt{2}$.]
    Theorem \ref{main-thm} has already been proved by by \eqref{equality-of-Pi-to-E-without-tau}. Also, for $\theta \in Dom_{\lambda}(\Pi)$, it satisfies (H1), (H2), (H3) by Proposition \ref{lacunary-prop}, Proposition \ref{crucial-tightness-prop}, Proposition \ref{replace-M-by-explicit-expression-in-tightness}, and \eqref{Z-t-converges-to-Z}. 
    
    Conversely, if $\theta$ satisfies (H1), (H2), (H3), it can be seen from the proofs above that $I_t^i \overset{\mathbb{P}}{\to} 0$ for $1\leq i\leq 5$ still holds. Furthermore, since (H3) gives \eqref{Z-t-converges-to-Z}, it follows as above that $\theta_{t,L}^\lambda \overset{d}{\to} \widetilde{\mathcal E}_\infty^\lambda(\cdot -S)$ with $S= \log Z /\mu$. Hence, $\theta \in Dom_{\lambda}(\Pi)$ with $\Pi \overset{d}{=} \widetilde{\mathcal E}_\infty^\lambda(\cdot -S)$.
\end{proof}

\section{Proof of Theorem \ref{main-thm} and Theorem \ref{main-thm-2} for $\lambda =\sqrt{2}$.} \label{Section proof critical}

The proof of Theorem \ref{main-thm} and Theorem \ref{main-thm-2} for $\lambda = \sqrt{2}$ again follows a strategy similar to that in the previous section. Much of the groundwork in this case was already established in \cite{Shekhar_Domain}, see Proposition \ref{excerpt-from-CGS-AAP}, using which we can considerably simplify the proof. \textcolor{black}{Compared with the case $\lambda>\sqrt{2}$, the critical regime requires different estimates for the F-KPP equation and relies on the exponentially-lacunary property in part (2) of Proposition \ref{lacunary-prop} to rule out the contribution from the intermediate regions.}\\ 

Let $\Pi$ be a fixed point of BBM with drift $\lambda = \sqrt{2}$ and $\theta \in Dom_{\lambda}(\Pi)$. Then, for $f\in C_c^+(\mathbb{R})$, Lemma \ref{Formula-for-Laplace-of-theta_t} gives 

\begin{equation}\label{Laplace-comp-using-F-KPP-4-critical}
\mathbb{E}[e^{-\langle f,\theta_t^{\sqrt{2}}\rangle}] = \mathbb{E}\biggl[\exp\biggl\{\int_{-\infty}^\infty\log(1- u_\phi(t, \sqrt{2} t -x))\theta(dx)\biggr\}\biggr],
\end{equation}
where $\phi(x) = 1-e^{-f(-x)}$. We assume $f$ is supported in $[-K, K]$.  

For constants $a>0$ small enough and $b>2\sqrt{2}$, we divide $\mathbb{R}$ into sets 
 
 \begin{align}
    \nonumber F_t^1 &= (bt, \infty), \\ \label{Def-sets-F} 
    F_t^2 &= [a \log t , bt] , \\ \nonumber
    F_t^3 &= [0, a\log t ), \\ \nonumber
    F_t^4 &= (-\infty, 0),  
\end{align}

and write 
\begin{equation}
    \int_{-\infty}^\infty\log(1- u_\phi(t, \sqrt{2} t -x))\theta(dx) = \sum_{i=1}^4 J_t^i,
\end{equation}
where
\begin{equation}
   J_t^i :=  \int_{F_t^i}\log(1- u_\phi(t, \sqrt{2} t -x))\theta(dx).
\end{equation}

We show that in the limit $t \to \infty$,  $J_t^1, J_t^2, J_t^3 \overset{\mathbb{P}}{\to} 0 $, and $J_t^4$ converges in distribution to a random variable. \\
\subsection{The proof of $J_t^1 \overset{\mathbb{P}}{\to} 0$}

We note, using \eqref{upper-bound-for-F-KPP-using-many-to-one} and Lemma \ref{Lemma Ratio 1}, that $u_\phi(t, \sqrt{2}t -x) \to 0 $ uniformly over $x \in (-\infty, -bt) \cup (bt, \infty)$ for any $b> 2 \sqrt{2}$. Hence, 

\begin{equation}
\int_{|x| >bt}\log(1- u_\phi(t, \sqrt{2} t -x))\theta(dx) = -(1+ o_t(1)) \int_{|x| >bt}u_\phi(t, \sqrt{2} t -x)\theta(dx).
\end{equation}
Also, choosing $u>0$ as in Lemma \ref{Lemma Ratio 1},
\begin{align}\label{reuse-of-B-tightness}
    &\int_{|x| > bt}e^t \mathbb{P}(| x+ B_t - \sqrt{2} t| \leq K) \theta(dx)\\ \nonumber& \leq \sup_{|x| >bt} \frac{e^t \mathbb{P}(|x+ B_t - \sqrt{2} t| \leq K)}{\mathbb{P}(|x+ B_u - \sqrt{2} u| \leq K)} \int_{|x| >bt}\mathbb{P}(|x+ B_u - \sqrt{2} u| \leq K) \theta(dx).
\end{align}

Then, Lemma \ref{Lemma Ratio 1} and Proposition \ref{crucial-tightness-prop} imply that 
\begin{equation}\label{larger-than-bt-does-not-contribute}
     \int_{|x| > bt} e^t \mathbb{P}(| x+ B_t - \sqrt{2} t| \leq K) \theta(dx) \overset{\mathbb{P}}{\to} 0,
\end{equation}
which in turn, using \eqref{upper-bound-for-F-KPP-using-many-to-one}, implies 
\begin{equation}
    \int_{|x| > bt} u_\phi(t, \sqrt{2}t -x) \theta(dx) \overset{\mathbb{P}}{\to} 0.
\end{equation}
This in particular implies $J_t^1 \overset{\mathbb{P}}{\to} 0$.

\subsection{The proof of $J_t^2 \overset{\mathbb{P}}{\to} 0$} The proof is immediate from Proposition \ref{lacunary-prop} as follows. Since $\theta$ is an integer-valued measure, 

\begin{align}
    \nonumber\mathbb{P}(J_t^2<0) &= \mathbb{P}\biggl( \int_{[a \log t,bt]}\log(1- u_\phi(t, \sqrt{2} t -x))\theta(dx) <0\biggr) \\
    \nonumber &\leq \mathbb{P}(\theta([a \log t, bt]) >0) \\
    \nonumber &= \mathbb{P}(\theta([a \log t, bt]) \geq 1).
\end{align}

Then, since $\theta$ is exponentially-lacunary, using Remark \ref{about-lacunary}, $\mathbb{P}(\theta([a \log t, bt]) \geq 1) \to 0$ for all $b>a >0$. Hence, $\mathbb{P}(J_t^2>0) \to 0$ as $t\to \infty$. \\

\subsection{A reformulation of the tightness property of $\theta \in Dom_{\lambda}(\Pi)$ for $\lambda = \sqrt{2}$.}

We note the corresponding variant of Proposition \ref{replace-M-by-explicit-expression-in-tightness} for $\lambda = \sqrt{2}$. 

\begin{proposition}\label{replace-M-by-explicit-expression-in-tightness-for-critical}
Let $\Pi$ be a fixed point of BBM with drift $\lambda =\sqrt{2}$ and $\theta \in Dom_{\lambda}(\Pi)$. Then, the random variables $\{\widetilde{Y}_t^b\}_{t\ge 1}$ given by 

\begin{equation}\label{tightness-tilde-Y-for-critical}
\widetilde{Y}_t^b := \int_{-\infty}^{\frac{1}{4} \log t }\frac{1}{t^{3/2}}(-x + \log t)e^{\sqrt{2}x} e^{-\frac{x^2}{2t}} \theta(dx)     
\end{equation}
is tight. 
\end{proposition}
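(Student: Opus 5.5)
We follow the proof of Proposition \ref{replace-M-by-explicit-expression-in-tightness}: split the range of integration, control a far-left piece via the Brownian thinning, and control the main piece via the $M$-thinning and the lower bound for the critical F-KPP solution. Write $\widetilde{Y}_t^b = \int_{-\infty}^{-bt} + \int_{-bt}^{\frac14\log t}$ with $b>2\sqrt{2}$.

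\textbf{Far-left piece.} By \eqref{larger-than-bt-does-not-contribute},
\[
\int_{-\infty}^{-bt} e^t\mathbb{P}(|x+B_t-\sqrt{2}t|\le K)\,\theta(dx)\overset{\mathbb{P}}{\to}0 .
\]
For $x<0$ the Gaussian computation \eqref{Brownian-in-negative-part-actually-gives-sharp-bound-super-critical} with $\lambda=\sqrt{2}$ gives
\[
e^t\mathbb{P}(|x+B_t-\sqrt{2}t|\le K)\gtrsim t^{-1/2}e^{\sqrt{2}x}e^{-x^2/2t}.
\]
For $x\le -bt$ the weight $t^{-3/2}(-x+\log t)$ is at most $t^{-3/2}\cdot 2|x|$. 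The factor $|x|/t$ is absorbed by a tiny portion of the Gaussian $e^{-x^2/2t}$: since $|x|\ge bt$, we have $|x|/t\lesssim e^{\epsilon x^2/t}$. If needed, we compensate by comparing with a slightly different time $u$, as in Lemma \ref{Lemma Ratio 1}, whose ratio estimate decays exponentially in $t$ and so swamps any polynomial factor. Hence this piece tends to $0$ in probability. Together with $\theta\in\mathcal N_2$, which gives finiteness uniformly for $t\in[1,t_0]$, this yields tightness of the piece for $t\ge 1$.

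\textbf{Main piece.} For $x\in[-bt,\frac14\log t]$, set $z=-x$. Then $z\in[-\frac14\log t, bt]$, and by \eqref{F-KPP-Formula-For-M},
\[
\mathbb{P}(|x+M_t-\sqrt{2}t|\le K)=u_M(t,\sqrt{2}t+z-K)-u_M(t,\sqrt{2}t+z+K).
\]
Lemma \ref{Lemma-on-difference-U_M-for-critical} (with $K$ large) bounds this from below by
\[
C_1 t^{-3/2}(z+\log t)e^{-\sqrt{2}z}e^{-z^2/2t}=C_1t^{-3/2}(-x+\log t)e^{\sqrt{2}x}e^{-x^2/2t}.
\]
Hence this piece is $\lesssim Z_t^b$ for $t\ge t_0$. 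The tightness of $Z_t^b$ from Proposition \ref{crucial-tightness-prop}(3) then gives tightness for $t\ge t_0$. On $[1,t_0]$ we again use $\theta\in\mathcal N_2$ and continuity in $t$ to get $\sup_{t\in[1,t_0]}<\infty$ a.s.

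\textbf{Main obstacle.} The delicate step is the far-left piece, where the polynomial prefactors differ from those in the Brownian bound: there is an extra factor $|x|/t$, against $t^{-1/2}$ versus $t^{-3/2}$. The exponential gain in Lemma \ref{Lemma Ratio 1}, of order $e^{-\delta t}$, should absorb these easily. Failing that, one can enlarge $b$ and redo the ratio bound with the weight $(-x+\log t)t^{-3/2}$ directly in the numerator.
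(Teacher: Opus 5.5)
Your split at $-bt$ and your treatment of the main piece $[-bt,\frac14\log t]$ are exactly the paper's: part (3) of Proposition \ref{crucial-tightness-prop}, the lower bound of Lemma \ref{Lemma-on-difference-U_M-for-critical}, and $\theta\in\mathcal N_2$ on $[1,t_0]$. The two arguments differ only on the far-left piece.

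\textbf{How the paper handles the far-left piece.} The paper never loses the factor $|x|/t$. In
\[
e^t\mathbb P(|x+B_t-\sqrt2 t|\le K)=\frac{1}{\sqrt{2\pi t}}e^{\sqrt2 x}e^{-\frac{x^2}{2t}}\int_{-K}^K e^{-\frac{y^2}{2t}-\sqrt2 y+\frac{xy}{t}}\,dy
\]
it keeps the factor $e^{xy/t}$ on $y\in[-K,0]$. For $x\le -bt$ this gives
\[
\int_{-K}^0 e^{xy/t}\,dy=\frac{t}{-x}\bigl(e^{-Kx/t}-1\bigr)\gtrsim \frac{-x}{t},
\]
and hence
\[
e^t\mathbb P(|x+B_t-\sqrt2 t|\le K)\gtrsim t^{-3/2}(-x+\log t)e^{\sqrt2x}e^{-x^2/2t}
\]
at the same time $t$. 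So \eqref{larger-than-bt-does-not-contribute} applies directly.

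\textbf{Your version.} You use the cruder bound from \eqref{Brownian-in-negative-part-actually-gives-sharp-bound-super-critical}, which discards exactly that factor, and then try to recover $|x|/t$. The justification you give does not work as stated, for two reasons:
\begin{enumerate}
\item Writing $|x|/t\lesssim e^{\epsilon x^2/t}$ leaves the weight $t^{-1/2}e^{\sqrt2x}e^{-(1-2\epsilon)x^2/2t}$. This is \emph{not} dominated by the time-$t$ Brownian quantity, so the Gaussian is not ``absorbing'' anything at time $t$.
\item A factor $e^{-\delta t}$ from Lemma \ref{Lemma Ratio 1} cannot swamp $|x|/t$, which is unbounded on $(-\infty,-bt]$. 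It is not a polynomial factor in $t$.
\end{enumerate}

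\textbf{How to repair it.} The repair is the one you hint at. Set $t'=t/(1-2\epsilon)$ with $\epsilon$ small enough that $b(1-2\epsilon)>2\sqrt2$. On $\{x\le -bt\}=\{x\le -b(1-2\epsilon)t'\}$ the weight satisfies
\[
\lesssim t'^{-1/2}e^{\sqrt2x}e^{-x^2/2t'}\lesssim e^{t'}\mathbb P(|x+B_{t'}-\sqrt2t'|\le K),
\]
and \eqref{larger-than-bt-does-not-contribute} at time $t'$, with $b(1-2\epsilon)$ in place of $b$, finishes. Equivalently, the density ratio in Lemma \ref{Lemma Ratio 1} with $u>t$ contains the factor $\exp\{-(y-x)^2(\frac1{2t}-\frac1{2u})\}$. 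This decays in $x$, and that decay, not the $e^{-\delta t}$, is what absorbs $|x|/t$.

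Once this is spelled out your argument is correct. The paper's route is simpler: a one-line sharpening of the Gaussian integral, with no change of time.
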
 

\begin{proof}
    The tightness of 
    \[\frac{1}{t^{3/2}}\int_{-bt}^{\frac{1}{4} \log t }(-x + \log t)e^{\sqrt{2}x} e^{-\frac{x^2}{2t}} \theta(dx)\]
 is an immediate consequence of part-(3) of Proposition \ref{crucial-tightness-prop} and lower bound in Lemma \ref{Lemma-on-difference-U_M-for-critical}. On the other hand, using \eqref{larger-than-bt-does-not-contribute}, we know that 
\begin{equation}
    \int_{-\infty}^{-bt} e^t \mathbb{P}(| x+ B_t - \sqrt{2} t| \leq K) \theta(dx) \overset{\mathbb{P}}{\to} 0,
\end{equation} 
and, similarly to \eqref{Brownian-in-negative-part-actually-gives-sharp-bound-super-critical}, for $x \leq -bt$, 
\begin{align} \label{Brownian-in-negative-part-actually-gives-sharp-bound-for-critical}
    \nonumber e^t \mathbb{P}(|x + B_t - \sqrt{2} t| \leq K) &=  \frac{1}{\sqrt{2\pi t}}e^{\sqrt{2} x}e^{-\frac{x^2}{2t}} \int_{-K}^K e^{-\frac{y^2}{2t}-\sqrt{2} y+ \frac{xy}{t}}dy \\ 
     \nonumber & \gtrsim \frac{1}{\sqrt{t}}e^{\sqrt{2} x}e^{-\frac{x^2}{2t}} \int_{-K}^0 e^{\frac{xy}{t}}dy \\
    \nonumber& =  \frac{1}{\sqrt{t}}e^{\sqrt{2} x}e^{-\frac{x^2}{2t}} \frac{t}{x}(1-e^{-Kx/t}) \\ 
    \nonumber & \gtrsim \frac{1}{\sqrt{t}}e^{\sqrt{2} x}e^{-\frac{x^2}{2t}} \frac{t}{-x}\frac{x^2}{t^2} \gtrsim \frac{1}{t^{3/2}}(-x + \log t)e^{\sqrt{2} x}e^{-\frac{x^2}{2t}},
\end{align}
where in last line we have used $e^r - 1 \gtrsim r^2$ for $ r \ge Kb$, with $r = -\frac{Kx}{t}$ for all sufficiently large $t$. It hence follows that 
\[\frac{1}{t^{3/2}}\int_{-\infty}^{-bt}(-x + \log t)e^{\sqrt{2}x} e^{-\frac{x^2}{2t}} \theta(dx) \overset{\mathbb{P}}{\to} 0.\]
This, together with the fact that $\theta \in \mathcal{N}_2$ almost surely, implies the tightness of 
\[\frac{1}{t^{3/2}}\int_{-\infty}^{-bt}(-x + \log t)e^{\sqrt{2}x} e^{-\frac{x^2}{2t}} \theta(dx),\]
which completes the proof. 
\end{proof}

\subsection{The proof of $J_t^3 \overset{\mathbb{P}}{\to} 0$}

Using Lemma \ref{when-does-u-go-to-zero}, for $a>0$ small enough, $u_\phi(t, \sqrt{2}t -x) \to 0 $ uniformly over $x \leq a \log t$, and 
\begin{equation}
\int_{F_t^3}\log(1- u_\phi(t, \sqrt{2} t -x))\theta(dx) = -(1+ o_t(1)) \int_{F_t^3}u_\phi(t, \sqrt{2} t -x)\theta(dx).
\end{equation}
Using \eqref{u-phi-bound-using-u-M} and \eqref{upper-bound-for-critical-U-M-for-x>-log t}, for $x \in [0, a \log t]$, 

\[ u_\phi(t, \sqrt{2}t -x) \leq u_M(t, \sqrt{2}t - x -K) \lesssim \frac{\log t} {t^{3/2}} e^{\sqrt{2}x}. \]
Hence, it suffices to show that 
\begin{equation} \label{sufficient-for-J-3}
\int_{0}^{a\log t} \frac{\log t} {t^{3/2}} e^{\sqrt{2}x} \theta(dx) \overset{\mathbb{P}}{\to} 0.
\end{equation}

To this end, note that for $x \in [0, \frac{1}{4}\log t ]$,
\[ \frac{\log t} {t^{3/2}} e^{\sqrt{2}x} \lesssim \frac{1}{t^{3/2}}(-x + \log t)e^{\sqrt{2}x} e^{-\frac{x^2}{2t}}.\]
Thus, Proposition \ref{replace-M-by-explicit-expression-in-tightness-for-critical}
implies that 
\begin{equation}\label{tightness-simplified-for-F-3}
    \int_{0}^{\frac{1}{4}\log t } \frac{\log t} {t^{3/2}} e^{\sqrt{2}x} \theta(dx)
\end{equation}
is tight in $t\geq 1$. Then, by taking $u=\sqrt{t}$, for $a$ small enough, 
\begin{equation}
    \int_{0}^{a \log t} \frac{\log t} {t^{3/2}} e^{\sqrt{2}x} \theta(dx) \lesssim \frac{1}{t^{3/4}}\int_{0}^{a \log t} \frac{\log u} {u^{3/2}} e^{\sqrt{2}x} \theta(dx) \lesssim \frac{1}{t^{3/4}}\int_{0}^{\frac{1}{4}\log u} \frac{\log u} {u^{3/2}} e^{\sqrt{2}x} \theta(dx).
\end{equation}

Therefore, the tightness in \eqref{tightness-simplified-for-F-3} implies \eqref{sufficient-for-J-3}, which completes the proof. 

\subsection{The convergence of $J_t^4$.}

Proposition \ref{replace-M-by-explicit-expression-in-tightness-for-critical} implies that 
\[ Y_t = \frac{1}{t^{3/2}}\int_{-\infty}^0(-x)e^{\sqrt{2}x} e^{-\frac{x^2}{2t}} \theta(dx) \]
is tight in $t\geq 1$. Let $\tau = \{t_n\}_{n\geq 1}$ be a sequence such that $Y_t \overset{d}{\to} Z^{\tau}$ along $\tau$ to some non-negative random variable $Z^\tau$. 

Similarly as \eqref{theta_t-def}, let us define 
\begin{equation} \label{theta_t-def-for-negative}
        \theta_{t}^{<}:= \sum\limits_{x_i <0} \sum\limits_{k=1}^{n^i(t)} \delta_{x_i+\chi_k^i(t)-\sqrt{2} t}.
    \end{equation}
Clearly, as in \eqref{Laplace-comp-using-F-KPP-4-critical}, we have 
\begin{equation}\label{laplace-for-negative-part}
    \mathbb{E}[e^{-\langle f,\theta_t^{<}\rangle}] = \mathbb{E}[e^{J_t^4}].
\end{equation}

Now, using Proposition \ref{excerpt-from-CGS-AAP}, along the sequence $\tau$, 
\begin{equation}\label{reuse-of-main-result-CGS-along-subsequence}
     \langle f, \theta_{t}^{<} \rangle \overset{d}{\to} \langle f, \widetilde{\mathcal{E}}_\infty(\cdot - S^\tau)\rangle
\end{equation}
for all $f \in C_c^+(\mathbb{R})$, where $S^\tau= \log Z^\tau/\sqrt{2}$. Hence, \textcolor{black}{along $\tau$},
\[ \mathbb{E}[e^{J_t^4}] \to \mathbb{E}[e^{-\langle f, \widetilde{\mathcal{E}}_\infty(\cdot - S^\tau)\rangle}].\]
But, since $\theta \in Dom_{\lambda}(\Pi)$ and $J_t^i\overset{\mathbb{P}}{\to} 0$ for $i=1,2,3$, \textcolor{black}{along $\tau$},
\[ \mathbb{E}[e^{J_t^4}] \to \mathbb{E}[e^{-\langle f, \Pi\rangle}].\]
This implies that 
\begin{equation}
    \Pi \overset{d}{=} \widetilde{\mathcal E}_\infty(\cdot -S^\tau).
\end{equation}

Since the law of $\Pi$ does not depend on the sequence $\tau$, it follows that $S^{\tau} \overset{d}{=} S$ for some $[-\infty, \infty)$-valued random variable $S$ for all sequences $\tau$, and

\begin{equation}\label{main-thm-concluded-for-critical}
    \Pi \overset{d}{=} \widetilde{\mathcal E}_\infty(\cdot -S).
\end{equation}

Furthermore, this implies that all subsequential limits $Z^\tau$ of $Y_t$ have the same law as $Z:= e^{\sqrt{2} S}$, which implies that 
\begin{equation}\label{Y-t-converges-to-Z}
    Y_t \overset{d}{\to} Z \qquad \text{as $t \to \infty$.}
\end{equation}

\begin{proof}[Proof of Theorem \ref{main-thm} and Theorem \ref{main-thm-2} for $\lambda =\sqrt{2}$.]

    Theorem \ref{main-thm} has already been proved by by \eqref{main-thm-concluded-for-critical}. Also, for $\theta \in Dom_{\lambda}(\Pi)$, it satisfies (A1), (A2), (A3) by Proposition \ref{lacunary-prop}, Proposition \ref{crucial-tightness-prop}, Proposition \ref{replace-M-by-explicit-expression-in-tightness-for-critical}, and \eqref{Y-t-converges-to-Z}. 
    
    Conversely, if $\theta$ satisfies (A1), (A2), (A3), it can be seen from the proofs above that $J_t^i \overset{\mathbb{P}}{\to} 0$ for $1\leq i\leq 3$ still holds. Furthermore, since (A3) gives \eqref{Y-t-converges-to-Z}, it follows from Proposition \ref{excerpt-from-CGS-AAP} that $\theta_{t}^{<} \overset{d}{\to} \widetilde{\mathcal E}_\infty(\cdot -S)$ with $S= \log Z /\sqrt{2}$. Hence $\theta \in Dom_{\lambda}(\Pi)$ with $\Pi \overset{d}{=} \widetilde{\mathcal E}_\infty(\cdot -S)$.
\end{proof}

\section{Some Additional Remarks}\label{Section Additional remarks}

\subsection{A remark on the finite intensity assumption in [Theorem 2.1-\cite{Kabluchko}].}\label{discussion-on-finite-intensity-vs-top-particle}

In the setting of fixed points of BBM, Theorem 2.1 of \cite{Kabluchko} states that if $\Pi$ is a fixed point of BBM with drift $\lambda >\sqrt{2}$ having locally finite intensity, then $\Pi \overset{d}{=} \widetilde{\mathcal{E}}_\infty^\lambda(\cdot-S)$ with a shift $S$ satisfying $\mathbb{E}[e^{\mu S}] <\infty$. To validate our main result Theorem \ref{main-thm}, we show in an a priori manner that the assumption of locally finite intensity forces $\Pi$ to have a finite top particle almost surely.

\begin{proposition}\label{Prop_intensity implies top particle}
Let $\Pi$ be a fixed point of BBM with drift $\lambda>\sqrt 2$ such that  

\begin{equation}\label{Pi-satisfies-on-compacts}
    \mathbb{E}[\Pi(K)] < \infty \textrm{ for all compact sets } K\subset \R. 
\end{equation}

Then,
\begin{equation}\label{Pi-satisfies-one-sided}
     \mathbb{E}[\Pi([0, \infty))] < \infty.
\end{equation}

In particular, $\Pi([0,\infty)) < \infty$ almost surely.  

\end{proposition}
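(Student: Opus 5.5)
The plan is to use only the first-moment (intensity) structure of $\Pi$ and the many-to-one lemma, with no F-KPP estimates. Let $\nu(\cdot) := \mathbb{E}[\Pi(\cdot)]$ be the intensity measure of $\Pi$, which by \eqref{Pi-satisfies-on-compacts} is a Radon measure on $\mathbb{R}$.

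\textbf{Step 1 (invariance of $\nu$).} As noted after Definition \ref{def-fixed-point}, \eqref{fixed-point-def} implies that $\theta_t^\lambda$ started from $\Pi$ is a.s.\ locally finite and has the same law as $\Pi$. In particular $\mathbb{E}[\theta_t^\lambda(A)] = \nu(A)$ for every bounded Borel $A$. Conditioning on $\Pi$, applying Lemma \ref{Lemma many to one} to each independent BBM, and using Tonelli to interchange the sum over atoms with the expectation (everything is non-negative), we get for all $t>0$ and bounded Borel $A$
\begin{equation*}
\nu(A) = \int_{-\infty}^{\infty} e^t\, \mathbb{P}(x + B_t - \lambda t \in A)\, \nu(dx).
\end{equation*}
I expect this step to be routine. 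The only point needing care is that the left side is finite, which is exactly \eqref{Pi-satisfies-on-compacts}.

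\textbf{Step 2 (exponential decay of $\nu$ on $[0,\infty)$).} Apply the identity with $A=[-1,2]$, keep only the contribution of $x\in[n,n+1]$ on the right, and choose the time $t=t_n := n/\lambda$ so that the drift carries these atoms back to near the origin. For $x\in[n,n+1]$ we have $x-\lambda t_n\in[0,1]$, so
\begin{equation*}
\mathbb{P}(x + B_{t_n} - \lambda t_n \in [-1,2]) \ge \mathbb{P}(B_{t_n}\in[-1,1]) \gtrsim t_n^{-1/2}
\end{equation*}
for $n\ge 1$. This yields
\begin{equation*}
\nu([-1,2]) \ge e^{n/\lambda}\, \frac{c}{\sqrt{n}}\, \nu([n,n+1]), \qquad \text{i.e.} \qquad \nu([n,n+1]) \lesssim \sqrt{n}\, e^{-n/\lambda}\, \nu([-1,2]).
\end{equation*}

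\textbf{Step 3 (conclusion).} Summing the bound of Step 2 over $n\ge 0$ (the $n=0$ term is just $\nu([0,1])<\infty$) gives
\begin{equation*}
\mathbb{E}[\Pi([0,\infty))] = \nu([0,\infty)) \le \sum_{n\ge0}\nu([n,n+1]) <\infty,
\end{equation*}
which is \eqref{Pi-satisfies-one-sided}. Finiteness of the expectation then gives $\Pi([0,\infty))<\infty$ almost surely.

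The only potential obstacle is justifying Step 1 rigorously, in particular that the fixed-point property transfers to equality of first moments. This follows because equality in law of the non-negative random variables $\theta_t^\lambda(A)$ and $\Pi(A)$ gives equality of their expectations, even if a priori one of them is infinite. Notably, the argument never needs the Choquet--Deny characterization of $\nu$ (which would predict $\nu$ to be a combination of $e^{-\mu x}dx$ and $e^{-(\lambda-\sqrt{\lambda^2-2})x}dx$). The crude choice $t=n/\lambda$ already produces the summable decay $e^{-n/\lambda}$, and this works for any $\lambda>0$.
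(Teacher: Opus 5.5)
Your proof is correct and uses the same mechanism as the paper. The intensity $\nu=\mathbb{E}[\Pi(\cdot)]$ is invariant under the first-moment kernel $e^{t}\,\mathbb{P}(x+B_t-\lambda t\in\cdot)$ by the many-to-one lemma, and the drift moves mass one unit to the left per time $1/\lambda$ while branching multiplies it by $e^{1/\lambda}$. The only difference is bookkeeping: the paper compares $\theta_t^\lambda([n,n+1])$ with $\theta_{t-\eta}^\lambda([n+1,n+2])$ for $\eta=1/\lambda$ and large $t$ to get the geometric recursion $\nu([n+1,n+2])\le\alpha^{-1}\nu([n,n+1])$. You instead transport $[n,n+1]$ to the fixed window $[-1,2]$ in one step at time $n/\lambda$, at the harmless cost of the Gaussian factor $\sqrt{n}$.
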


\begin{proof}

Let $\theta_t^\lambda$ be given by \eqref{theta_t-def} started from $\theta_0^\lambda= \Pi$. Then, since $\Pi$ is a fixed point,  $\theta_t^\lambda(K) \overset{d}{=} \Pi(K)$ for all compact sets $K$ and for all $t>0$. Also, since $\Pi$ satisfies \eqref{Pi-satisfies-on-compacts}, $\E[\theta_t^\lambda(K) ] = \E[\Pi(K)]$. 
By conditioning on $\Pi$ and using the many-to-one lemma (Lemma \ref{Lemma many to one}), for $n\geq 1$, 

\begin{align} \label{Equation locally finite}
\nonumber\mathbb E[\theta_t^\lambda[n,n+1]\bigl|\Pi] &=\int_{\mathbb R} e^{t} \mathbb P(x+B_t-\lambda t \in [n,n+1])\Pi(dx) \\
&= \int_{\mathbb R} \int_{n}^{n+1} \frac{e^t}{\sqrt{2 \pi t}} \exp\{-(y+\lambda t-x)^2/2t\} dy \Pi(dx).
\end{align}

Let $\eta=\frac{1}{\lambda}$. Then, for sufficiently large $t$ such that $e^{\eta}\sqrt{t-\eta}/\sqrt{t} \geq \alpha >1$, we can write, \textcolor{black}{for all $n \ge 0$},

\begin{align*}
\mathbb E[\theta_t^\lambda[n,n+1]\bigl|\Pi]&= \int_{\mathbb R} \int_{n}^{n+1} \frac{e^t}{\sqrt{2 \pi t}}  \exp\biggl\{-\frac{(y+1+\lambda (t-\eta)-x)^2}{2t}\biggr\} dy \Pi(dx)\\
& \ge e^{\eta}\frac{\sqrt{t-\eta}}{\sqrt{t}}\int_{\mathbb R} \int_{n+1}^{n+2} \frac{e^{t-\eta}}{\sqrt{2\pi (t-\eta)}}  \exp\biggl\{-\frac{(y+\lambda (t-\eta)-x)^2}{2(t-\eta)}\biggr\} dy \Pi(dx)\\
&\ge \alpha \mathbb E[\theta_{t-\eta}^\lambda[n+1,n+2]\bigl|\Pi],
\end{align*}

where we used \eqref{Equation locally finite} once again in the last line. Taking expectation on both sides above, it follows that 
\[ \mathbb{E}[\Pi([n+1, n+2])] \leq \frac{1}{\alpha}  \mathbb{E}[\Pi([n, n+1])],\]

which by iteration implies 

\[\mathbb{E}[\Pi([n, n+1])] \lesssim \alpha^{-n} \mathbb{E}[\Pi([0, 1])].\]

Since $\alpha >1$, the right-hand side is summable, which implies \eqref{Pi-satisfies-one-sided}.

\end{proof}
 
\subsection{A discussion on Tauberian theorems.}

The condition (A3) appearing in Theorem \ref{main-thm-2} for $\lambda = \sqrt{2}$ was further simplified in \cite{Shekhar_Domain} using a probabilistic version of the Hardy-Littlewood-Karamata (HLK) Tauberian theorem, see [Section 5-\cite{Shekhar_Domain}] for details, and also see \cite{Berestycki-Wong-24-Weyl-Law} where such a probabilistic HLK Tauberian theorem appears as well in the context of Weyl's law in Liouville quantum gravity. It is then natural to ask if it is possible to simplify/convert the condition (H3) appearing in Theorem \ref{main-thm-2} for $\lambda >\sqrt{2}$ into a simpler statement about $\theta$. It can be easily shown using the same arguments as in section \ref{subsection-for-I4/I5} that in the presence of tightness of $Z_t := \int_{- \infty}^0 \frac{1}{\sqrt{t}}e^{(1-\frac{\lambda^2}{2})t}e^{\lambda x}e^{-\frac{x^2}{2t}}\theta(dx)$, (H3) can be equivalently stated as  

\begin{equation}\label{conclusion-for-supercritical-2}
   Z_t = \int_{-\infty}^{0} \frac{1}{\sqrt{t}}e^{(1-\frac{\lambda^2}{2})t}e^{\lambda x}e^{-\frac{x^2} {2t}}\theta(dx) \overset{d}{\to} Z.
    \end{equation}

By defining $\hat{\theta}(A) = \theta(-A)$, $G(x) = \int_0^{\sqrt{x}} e^{-\lambda y} \hat{\theta}(dy)$ and $\delta = 1/2t$, the condition \eqref{conclusion-for-supercritical-2} can be recast into the form  

\begin{equation}\label{exponential-tauberian-statement}
    \delta^{l} e^{b/\delta} \int_0^\infty e^{-\delta x} dG(x) \overset{d}{\to} X,
\end{equation}
as $\delta \to 0+$, where $G$ is a random monotonic increasing function, $l=1/2$, $b$ is some \textcolor{black}{negative} constant (\textcolor{black}{since $\lambda>\sqrt 2$}) and $X$ is some random variable. Then, simplifying condition (H3) is equivalent to converting \eqref{exponential-tauberian-statement} into an asymptotic statement about function $G(x)$ as $x\to + \infty$. This is a natural problem in Tauberian theory. In the critical case $\lambda = \sqrt{2}$ (which gives $b=0$ in \eqref{exponential-tauberian-statement}), such an attempt falls exactly in the framework of HLK Tauberian theorem. In this case, the decay of the Laplace transform of $G$ is of polynomial type rather than of exponential type in \eqref{exponential-tauberian-statement}. One can hence rely on the theory of regularly varying functions, which is the framework of HLK Tauberian theorem, see [Section 5-\cite{Shekhar_Domain}] for details. However, for $\lambda >\sqrt{2}$ (which gives $b<0$ in \eqref{exponential-tauberian-statement}), without assuming any additional assumption on the function $G$, \eqref{exponential-tauberian-statement} cannot be always converted into an asymptotic statement about $G(x)$ as $x\to \infty$. A counterexample is provided below. However, there are some related results in that direction, e.g. Kohlbecker's, Cadena's, Kosugi's, Kasahara's, de Bruijn's Tauberian theorems, see [Chapter 4.12-\cite{Bingham_Regular_variation}] and \cite{cadena2015note, Nobuko_Tauberian}. In particular, when $G$ is deterministic, Kohlbecker's Tauberian theorem can be used to conclude from \eqref{exponential-tauberian-statement} that $\log G(x) \sim 2 \sqrt{-bx}$. 

\subsubsection{A counterexample to exponential type Tauberian equivalence.}\label{rem:tauberian}
\footnote{This counterexample was suggested in an interaction with ChatGPT (OpenAI), and the calculations were subsequently verified by the authors.}

We consider a special case of \eqref{exponential-tauberian-statement} with $b=-1$ and any $l \in \mathbb{R}$. Let $G: [0,\infty) \to [0,\infty)$ be a deterministic monotonic increasing function satisfying 
\begin{equation}\label{exp-tau-statement-2}
    \int_0^\infty e^{-\delta x} dG(x) \sim \frac{2\sqrt{\pi}}{3} \delta^{-l}e^{1/\delta}.
\end{equation}

There is a particular choice of function $G = G_0$ given by $G_0(x) = \frac{1}{3}x^{l/2 -1/4}e^{2\sqrt{x}}$ which satisfies \eqref{exp-tau-statement-2}. Hence, if \eqref{exp-tau-statement-2} is equivalent to an asymptotic statement about function $G$, then any such $G$ must satisfy $G(x) \sim G_0(x)$ as $x\to \infty$. However, following is an example of $G = G_1$ which satisfies \eqref{exp-tau-statement-2}, but $G_1(x)$ is not asymptotic to $G_0(x)$. \\

For $l \in \mathbb{R}$, let
\[
    \gamma=\frac{3l}{2}-\frac14,\quad
    q_m=m^\gamma e^{2m^{3/2}},\quad \mu = \sum_{m=1}^\infty q_m\delta_{m^3}, \quad
    G_1(x)= \mu[0,x].
\]

Then, $G_1$ satisfies \eqref{exp-tau-statement-2}. However, note that since $q_{m-1}/q_m \to 0 $ exponentially fast, $G_1(m^3) \sim q_m$. This in turn implies that $G_1(m^3)/G_0(m^3) \sim 3 \sqrt{m}$. On the other hand, for $y_m = (m+1)^3 -1$, we again have $G_1(y_m) \sim q_m$. But, it can be easily seen that $q_m/G_0(y_m) \to 0$. We conclude that 
\[ \limsup_{x\to \infty} \frac{G_1(x)}{G_0(x)} = +\infty, \textrm{ and } \liminf_{x\to \infty} \frac{G_1(x)}{G_0(x)} = 0.\]
Hence, $G_1$ is not asymptotic to $G_0$.

\section{Appendix} \label{Section Appendix}

For a Poisson point process $PPP(\eta(x)dx)$ and another independent point process $\mathcal{D}$, we will use the notation $DPPP(\eta(x)dx, \mathcal{D})$ to denote the decorated Poisson point process obtained from these.
For $\rho \geq \sqrt{2}$, let $\mathcal{E}^{\rho}= DPPP(Ce^{-\rho x}dx, \mathcal D^{\rho})$, where $C$ is a constant and $\mathcal{D}^\rho$ is as in \eqref{decoration_family}. A direct computation gives that
    \begin{equation}\label{Laplace_DPPP} 
    \mathbb{E}[e^{-\langle f,\mathcal{E}^{\rho}\rangle}] = \exp\left(-C\int_{-\infty}^{\infty} \mathbb E[1-e^{-\int f(y+z)\mathcal{D}^{\rho}(dz)}]e^{-\rho y}dy\right),
\end{equation}
for every $f\in C_{c}^{+}(\mathbb{R})$ or of the form \eqref{Form of f}.

\subsection{Identification of the fixed point (for $\lambda>\sqrt 2)$}\label{sec3}
Recall that $\widetilde{\mathcal E}_\infty^\lambda$ is the large time limit of the BBM flow with supercritical drift $\lambda>\sqrt 2$ starting from the initial distribution  $ PPP(\frac{1}{\sqrt{2\pi}}e^{-\mu x } dx)$, with $\mu=\lambda+\sqrt{\lambda^2-2}$ (see Section \ref{main-results-subsection}). 
Now we determine the exact structure of $\widetilde{\mathcal E}_\infty^\lambda$, and also motivate why this particular choice of $\mu$ is essential.
The approach is inspired by \cite{Lalley_Sellke} and \cite{Kabluchko}. 
Consider the BBM flow $\theta_t^\lambda$ with the initial distribution $\theta_0= PPP(\eta(x)dx)$ and we study the corresponding $\theta^\lambda_{t}$ as $t\to\infty$.
One important observation is that for some appropriately chosen $\eta(x)$, the intensity of $\theta^\lambda_t$ is time invariant, that is 
for all $f$ of the form \eqref{Form of f} \footnote{It suffices to consider only this class of test functions as a posteriori, the right choice of $\eta$ implies PPP$(\eta(x)dx)\in \mathcal N_2$ (see Section \ref{Section Laplace}).},
\begin{equation}\label{Eqn_equal expectation_app1}
  \mathbb{E} [\langle f,\theta_{t}^{\lambda}\rangle] = \mathbb{E} [\langle f, \theta\rangle ], \quad \forall t \ge 0.
\end{equation}
To find such a suitable intensity measure $\eta(x)dx$, setting $v(t,x) := \mathbb{E}_x [\langle f, \theta_{t}^{\lambda}\rangle ]$, \eqref{Eqn_equal expectation_app1} implies
\begin{equation}\label{3.3'}
  \frac{\partial v}{\partial t} = 0.
\end{equation}
Using the many to one formula (Lemma \ref{Lemma many to one}), we have 
\begin{equation}
  v(t,x) = \int_{\mathbb{R}}e^{t}\mathbb{E}[f(x+B_{t}-\lambda t)]\eta(x)dx.
\end{equation}
Moreover, $\omega(t,x) := \mathbb{E}[f(x+B_{t}-\lambda t)]$ satisfies
\begin{equation}
  \frac{\partial \omega}{\partial t} = \frac{1}{2} \frac{\partial^2\omega}{\partial x^2} -\lambda\frac{\partial \omega}{\partial x} .
\end{equation}
Combining these and using integration by parts, we get that $\eta(x)$ satisfies the following ODE:
 \begin{equation}
  \frac{1}{2}\eta'' + \lambda\eta' + \eta = 0.
 \end{equation}
 For $\lambda>\sqrt{2}$, the above ODE has the following two fundamental solutions:
\[ \eta_{1}(x) = e^{-\mu_{1}x}, \quad  \eta_{2}(x) = e^{-\mu_{2}x},\]
where $\mu_{1} = \lambda-\sqrt{\lambda^{2}-2}$, $\mu_{2} = \lambda+\sqrt{\lambda^{2}-2}$. 

It turns out that both choices of $\eta$ ensure that the intensity of $\theta^\lambda_t$ is time-invariant.
 In \cite{Lalley_Sellke}, the authors showed that if we start from $PPP(e^{-\mu_{1}x}dx)$, $\theta^\lambda_{t}$ converges in distribution to the empty point process, that is, the Laplace functional of $\theta^\lambda_{t}$ converges to $1$ as $t \to \infty$. 
 They also conjectured that starting from $PPP(e^{-\mu_{2}x}dx)$, $\theta^\lambda_{t}$ converges in distribution to a nontrivial limit.
 Kabluchko showed that this conjecture is true and the limiting point process is a decorated Poisson point process \cite{Kabluchko}. 
We now identify the exact structure of the limiting point process in the following proposition.
\begin{proposition}\label{Identify_DPPP}
  For $\lambda> \sqrt{2}$, let $\mu = \lambda+\sqrt{\lambda^{2}-2}$, $\theta_0= PPP(\frac{1}{\sqrt{2\pi}}e^{-\mu x}dx),$ then as $t \to \infty$,
\begin{equation}
  \theta_{t}^{\lambda} \Rightarrow \widetilde{\mathcal E}_{\infty}^\lambda,
\end{equation}
where the convergence is in the sense of vague convergence on the space of all point configurations. Furthermore 
\begin{equation}\label{Def_fixed}
 \widetilde{\mathcal E}_{\infty}^\lambda = DPPP(\mu C_M(\alpha(\lambda))e^{-\mu x}dx,\mathcal{D}^{\mu}),
\end{equation}
 where $C_M(\cdot)$ is as in \eqref{rename-C(f)-for-M-for-supercritical} and
$\mathcal D^{\mu}(\cdot)$ is a decoration point process with the law
\begin{equation}
     \mathbb P(\mathcal{D}^{\mu} \in \cdot) = \lim_{t\to\infty}\mathbb P(\sum_{k = 1}^{n(t)}\delta_{\{\chi_{k}(t) - M_{t} \}} \in \cdot | M_{t} \ge \mu t).
\end{equation}
\end{proposition}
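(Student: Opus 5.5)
We need to prove Proposition \ref{Identify_DPPP}: the limit exists, and the limit is $DPPP(\mu C_M(\alpha(\lambda))e^{-\mu x}dx,\mathcal D^\mu)$.

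Convergence and the Laplace functional are already in hand. By Lemma \ref{Formula-for-Laplace-of-theta_t} and the Laplace functional of a Poisson process, for $f$ of type \eqref{Form of f},
\[
\mathbb E[e^{-\langle f,\theta_t^\lambda\rangle}]=\exp\Bigl(-\tfrac{1}{\sqrt{2\pi}}\int u_\phi(t,\lambda t-x)e^{-\mu x}dx\Bigr).
\]
By \eqref{C(a)-specific-to-lambda} this tends to $e^{-C(f,\alpha(\lambda))}$, in agreement with \eqref{supercritical-E-Laplace-fromula}. For general $f\in C_c^+$ we approximate by step functions as in Section \ref{Section proof supercritical}. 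So everything reduces to showing that
\[
C(f,\alpha(\lambda))=\mu C_M(\alpha(\lambda))\int_{\mathbb R}\mathbb E\bigl[1-e^{-\int f(y+z)\mathcal D^\mu(dz)}\bigr]e^{-\mu y}dy ,
\]
since by \eqref{Laplace_DPPP} the right-hand side is minus the log-Laplace functional of the claimed DPPP.

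To get this identity I would use the first characterization \eqref{C(a)} of $C(f,\alpha)$ with $z=\alpha t$, $\alpha=\alpha(\lambda)=\mu-\sqrt2$, and write $u_\phi(t,z+\sqrt2t)$ as an expectation. Using \eqref{Laplace-comp-using-F-KPP-Step-1}, with $\phi(x)=1-e^{-f(-x)}$,
\[
u_\phi(t,\mu t+y)=\mathbb E\bigl[1-e^{-\sum_k f(\chi_k(t)-\mu t-y)}\bigr].
\]
Since $f$ is supported on $[b,\infty)$, only the event $M_t\ge \mu t+y+b$ contributes. The sum is then $\int f(M_t-\mu t-y+w)\,\mathcal D_t(dw)$, where $\mathcal D_t=\sum_k\delta_{\chi_k(t)-M_t}$. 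Decompose by the overshoot $s=M_t-\mu t$.
\begin{enumerate}
\item From Lemma \ref{C(a) definition} applied to $u_M$, the tail $\mathbb P(M_t\ge \mu t+s)$ behaves like $C_M(\alpha)\,\mathrm{norm}_t\, e^{-\mu s}$. Here $\mathrm{norm}_t=t^{-1/2}e^{-\sqrt2 z-z^2/2t}$, and the shift identity \eqref{shift-relation-for-C(a)} produces the exponential factor $e^{-\mu s}$. Hence, conditionally on $M_t\ge\mu t$, the overshoot converges to an Exp$(\mu)$ variable.
\item By the definition \eqref{decoration_family}, $\mathcal D_t$ conditioned on $M_t\ge\mu t$ converges to $\mathcal D^\mu$.
\end{enumerate}
If the overshoot and $\mathcal D_t$ are asymptotically independent under this conditioning, then after normalization
\[
u_\phi(t,\mu t+y)\big/\mathrm{norm}_t\;\to\; C_M(\alpha)\int_{\mathbb R}\mu e^{-\mu s}\,\mathbb E\bigl[1-e^{-\int f(s-y+w)\mathcal D^\mu(dw)}\bigr]ds .
\]
Setting $y=0$ and substituting $s$ gives the required identity.

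The main obstacle is this asymptotic independence of the overshoot and the decoration, together with uniform integrability over $s$. For the large-$s$ tail, the bound $\mathbb P(M_t\ge\mu t+s)\lesssim \mathrm{norm}_t\,e^{-\mu s}$, uniform in $s$ on compacts and sub-linear in $t$, follows from Lemma \ref{lemma_upper and lower for F-KPP}. For independence, I would exploit that conditioning on $M_t\ge \mu t+s$ is the same as conditioning on $M_t\ge \mu t$ after shifting the starting point by $s$. Bovier–Hartung \cite{bovier2014extremal} give the limit \eqref{decoration_family} for the conditional law, and its proof shows that the conditional law of $\mathcal D_t$ given $\{M_t-\mu t\in[s,s+ds]\}$ is asymptotically independent of $s$; I would cite or re-derive this.

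If that fails, the fallback is a Lalley–Sellke/Kabluchko-type argument, done in three steps.
\begin{enumerate}
\item Write $\widetilde{\mathcal E}^\lambda_\infty$ as a PPP of cluster maxima, via the maximum-thinning $M(\theta_t^\lambda)$ of \eqref{def-thinning-M}. Its intensity is $\frac{1}{\sqrt{2\pi}}\int \mathbb P(x+M_t-\lambda t\in dy)e^{-\mu x}dx\to \mu C_M e^{-\mu y}dy$, computed through \eqref{C(a)-specific-to-lambda} with $f=\infty\cdot1_{[y,\infty)}$.
\item Given its maximum at $y$, each cluster is the BBM conditioned on $M_t\ge\mu t$ up to a shift.
\item Independence of the shift and the decoration then holds by construction of the Poisson limit.
\end{enumerate}
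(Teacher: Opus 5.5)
Your main route is correct and is essentially the paper's argument. The identity $C(f,\alpha(\lambda))=\mu C_M(\alpha(\lambda))\int_{\mathbb R}\mathbb E[1-e^{-\int f(y+z)\mathcal D^\mu(dz)}]e^{-\mu y}\,dy$ is obtained there (Lemma \ref{gap_convergence} and \eqref{L(f)_2}--\eqref{Eqn_C_M and C_f}) exactly as you propose: from the Bovier--Hartung joint convergence of the overshoot and $\mathcal D_t$, conditioned on a large maximum, to an independent pair $(\mathrm{Exp}(\mu),\mathcal D^\mu)$, combined with the $u_M$ asymptotics of Lemma \ref{C(a) definition}.

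The only difference is that you take the limit of $\frac{1}{\sqrt{2\pi}}\int u_\phi(t,\lambda t-x)e^{-\mu x}dx$ directly from \eqref{C(a)-specific-to-lambda}, whereas the paper re-derives it through the localization Lemma \ref{Main_contribution in sec3}. That is a legitimate shortcut. Also, no uniform integrability in $s$ is needed: after conditioning on $\{M_t\ge \mu t+y+b\}$ the integrand is bounded by $1$.
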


The rest of this section is devoted to proving this proposition. 
Recall that $\{\chi^i\}_{i \in \mathbb N}$ is a family of i.i.d. BBMs starting from the origin, $n^i(t)$ counts the number of particles of the $i$-th BBM alive at time $t$, and $\{\chi_k^i(t)\}_{k \le n^i(t)}$ are their locations.
For $f$ as in \eqref{Form of f} and $supp (f) \subset [-K_f, \infty), $ for some $K_f>0$, consider the Laplace transform of $\theta_{t}^\lambda:$
\begin{align}
 \nonumber \mathbb{E}[e^{-\langle f,\theta_{t}^{\lambda}\rangle}]
&=\mathbb{E}[e^{-\sum_{i=1}^{\infty}\sum_{k=1 }^{n^i(t)}f(x_{i}+\chi_{k}^{i}(t)-\lambda t)}]\\
\nonumber &=\exp\left(-\frac{1}{\sqrt{2\pi}}\int (1-\mathbb E[e^{-\sum_{k=1 }^{n^i(t)}f(x_{i}+\chi_{k}^{i}(t)-\lambda t)}])e^{-\mu x}dx\right)\\
 \label{Laplace from PPP} &=\exp\left(-\frac{1}{\sqrt{2\pi}}\int_{-\infty}^{\infty}u_{\phi }(t,\lambda t-x)e^{-\mu x}dx\right) ,
\end{align}
where $u_{\phi}$ is the solution of the F-KPP equation
with initial data $u(0,x) = 1-e^{-f(-x)}$, and $f$ is of the form \eqref{Form of f}.
Thus, our objective is reduced to understanding the limiting behaviour of $\int_{-\infty}^{\infty}u_{\phi }(t,\lambda t-x)e^{-\mu x}dx$ as $t$ tends to $\infty$.

Towards this goal, we prove the next lemma, which states that for the initial distribution $ \theta_0= PPP(\frac{1}{\sqrt{2\pi}}e^{-\mu x}dx)$, only the BBMs starting from the region around $-(\mu-\lambda)t$ meaningfully contribute in large times.
For brevity, we prove this for the initial distribution $PPP(e^{-\mu x}dx)$, and it is straightforward to notice that the statement of the following lemma does not change if one considers the initial distribution $PPP(Ce^{-\mu x}dx)$, for any $C>0.$

\begin{lemma}\label{Main_contribution in sec3}
  Let $(p_{i})_{i\in I}$ be the atoms of a Poisson point process $\mathcal P$ with intensity measure $e^{-\mu x}dx$. For any choice of $z\in \mathbb R$ and $\epsilon>0$, there exist $t_{0}>0, \delta >0$ such that for all $t > t_{0}$, 
  \begin{equation}
    \mathbb{P} \biggl(\exists i\in \mathbb N, k \le n^{i}(t):p_{i}+\chi_k^i(t)-\lambda t \ge z,p_i\notin[-(\mu - \lambda)t-\delta\sqrt{t},-(\mu -\lambda)t+\delta\sqrt{t}]\biggr)\le \epsilon. \label{2.18}
  \end{equation}
\end{lemma}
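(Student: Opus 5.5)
Fix $z$ and $\epsilon$. By the union bound over atoms and Campbell's formula for the Poisson process $\mathcal P$, the probability in \eqref{2.18} is at most
\[
\int_{\mathbb R\setminus[-\gamma t-\delta\sqrt t,\,-\gamma t+\delta\sqrt t]} \mathbb P(x+M_t-\lambda t\ge z)\,e^{-\mu x}\,dx
=\int_{\mathbb R\setminus[\cdots]} u_M(t,\lambda t-x+z)\,e^{-\mu x}\,dx ,
\]
where $\gamma=\mu-\lambda$ and $u_M$ is as in \eqref{F-KPP-Formula-For-M}. By the shift $x\mapsto x+z$, which only multiplies by $e^{-\mu z}$ and moves the window by $O(1)$, we may take $z=0$. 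So it suffices to show that this integral is small once $\delta$ is large and $t\ge t_0$. Note that this forces $\delta$ large; the statement's ``there exist $\delta$'' allows that choice.

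We split the domain into four regions.

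\textbf{Region $x\ge -at$.} Here $a<\lambda-\sqrt2$ and $\lambda t-x\ge \sqrt2 t+\epsilon' t$. Bound $u_M$ by the first-moment estimate $e^t\,\mathbb P(B_t\ge \lambda t-x)$ (Lemma \ref{when-does-u-go-to-zero}, or simply $u_M\le1$ where convenient). Integrated against $e^{-\mu x}$, the Gaussian computation gives an exponent of the form $t(1-\lambda^2/2)+\mu\lambda t-\mu^2t/2$ evaluated away from its optimum. Since $\mu$ solves $\mu^2/2-\lambda\mu+1=0$, the total exponent is $0$ at the saddle $x=-\gamma t$ and strictly negative elsewhere. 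The region $x\ge -at$ is a fixed fraction of $t$ away from the saddle, so its contribution decays like $e^{-ct}$. The tail $x\to+\infty$ is even smaller.

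\textbf{Region $x\le -bt$.} Here $u_M\le 1$ fails to help, because $e^{-\mu x}$ blows up. Instead use the many-to-one bound $u_M(t,\lambda t-x)\le e^t\,\mathbb P(B_t\ge \lambda t-x)$. After completing the square the integrand is
\[
\frac1{\sqrt t}\,e^{(1-\frac{\lambda^2}{2})t}\,e^{\lambda x}\,e^{-\frac{x^2}{2t}}\,e^{-\mu x}.
\]
This is a Gaussian in $x$ centred at $(\lambda-\mu)t=-\gamma t$ with variance $t$. Since the exponent vanishes at the centre, the integral over $|x+\gamma t|\ge \delta\sqrt t$ is bounded by a Gaussian tail $\lesssim e^{-\delta^2/2}$, uniformly in $t$.

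\textbf{Middle region $x\in[-bt,-at]$.} Set $z'=(\lambda-\sqrt2)t-x$. By Lemma \ref{lemma_upper and lower for F-KPP}(a), $u_M\lesssim t^{-1/2}e^{-\sqrt2 z'}e^{-z'^2/2t}$, and via \eqref{z-to-x-transformation} this is exactly the Gaussian density above. So the same computation applies: integrating over $|x+\gamma t|\ge\delta\sqrt t$ gives a contribution $\lesssim e^{-\delta^2/2}$, uniformly in $t\ge t_0$.

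\textbf{Conclusion.} Choosing $\delta$ with $C e^{-\delta^2/2}<\epsilon/2$, and then $t_0$ so that the exponentially small pieces are below $\epsilon/2$, gives \eqref{2.18}.

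The main obstacle is the middle region. There we need an F-KPP upper bound that is uniform over $x\in[-bt,-at]$, and the $\sqrt t$-scale accuracy must be sharp enough that the Gaussian factor $e^{-(x+\gamma t)^2/2t}$ survives. Lemma \ref{lemma_upper and lower for F-KPP}(a) supplies this. If it did not, I would fall back on the many-to-one bound, which yields the same Gaussian profile.
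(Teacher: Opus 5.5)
Your argument is correct and is essentially the paper's: a union bound plus Campbell's formula, then the first-moment bound $\mathbb{P}(M_t\ge y)\le e^t\mathbb{P}(B_t\ge y)$, which after completing the square gives the Gaussian profile $t^{-1/2}e^{-(x+\gamma t)^2/2t}$ with tails $\lesssim e^{-\delta^2/2}$, plus an $e^{-ct}$ contribution from the far right. The one deviation is your use of Lemma \ref{lemma_upper and lower for F-KPP}(a) on $[-bt,-at]$, which is unnecessary: the paper applies the many-to-one/Mills-ratio bound on all of $x\le 0$ (where $\lambda t-x\ge\lambda t$) and gets the same profile, so the ``main obstacle'' you flag does not arise (separately, your aside that $\lambda t-x\ge\sqrt2 t+\epsilon' t$ on $x\ge -at$ is false for large $x$, but you never use it).
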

\begin{proof}
  We first consider the case when $p_{i}> -(\mu -\lambda)t+\delta\sqrt{t}$.
  By union bounds and Campbell's formula, we have 
  \begin{equation}\label{estimae in Lem3.2}
    \begin{aligned}
       &\mathbb{P} \biggl(\exists i\in \mathbb N, k \le n^{i}(t):p_{i}+\chi_k^i(t)-\lambda t \ge z, p_i>-(\mu -\lambda)t+\delta\sqrt{t} \biggr)\\
     \le&\mathbb{E} \biggl[\sum_{p_{i} > -(\mu - \lambda)t +\delta \sqrt{t}}\mathbb{P} (p_{i}+M_{t}-\lambda t \ge z)\biggr]\\
      =&\int_{-(\mu-\lambda)t+\delta\sqrt{t}}^{\infty}\mathbb{P} (M_{t}\ge \lambda t-x+z)e^{-\mu x}dx.\\
    \end{aligned}
  \end{equation}
We use the many to one lemma  (Lemma \ref{Lemma many to one}) to bound the probability inside the integral and split it into two parts:
\begin{equation}
    \begin{aligned}
    &\int_{-(\mu-\lambda)t+\delta\sqrt{t}}^{\infty}\mathbb{P} (M_{t}\ge \lambda t-x+z)e^{-\mu x}dx\\
           \le&\int_{-(\mu-\lambda)t+\delta\sqrt{t}}^{\infty}e^{t}\mathbb{P}(B_{t}\ge \lambda t-x+z)e^{-\mu x}dx \\
    = &\int_{-(\mu-\lambda)t+\delta\sqrt{t}}^{0}e^{t}\mathbb{P}(B_{t}\ge \lambda t-x+z)e^{-\mu x}dx + \int_{0}^{\infty}e^{t}\mathbb{P}(B_{t}\ge \lambda t-x+z)e^{-\mu x}dx\\
     = &(I) + (II). \label{2.19}
    \end{aligned}
\end{equation}
By standard Gaussian Mills ratio argument, we get
  \begin{equation}
    \begin{aligned}
      (I)
     =&\int_{-(\mu-\lambda)t+\delta\sqrt{t}}^{0}e^{t}\mathbb{P}(B_{t}\ge \lambda t-x+z)e^{-\mu x}dx\\
\lesssim & \int_{0}^{(\mu-\lambda)t-\delta\sqrt{t}}\frac{e^{t}}{\sqrt{2\pi t}}e^{\mu x}e^{-\frac{(z+x+\lambda t)^{2}}{2t}}dx.\\
    \end{aligned}
  \end{equation}
  where we do a change of variables $x\to -x$. 
  Observe that since $\mu = \lambda + \sqrt{\lambda^2-2}$, we can find a constant $c_{1}>0$ depending on $z$ such that for  $t$ large enough,
  \begin{equation}
    \begin{aligned}
      &\int_{0}^{(\mu-\lambda)t-\delta\sqrt{t}}\frac{e^{t}}{\sqrt{2\pi t}}e^{\mu x}e^{-\frac{(z+x+\lambda t)^{2}}{2t}}dx\\
    \le& c_{1}\int_{0}^{(\mu-\lambda)t-\delta\sqrt{t}}\frac{1}{\sqrt{2\pi t}}e^{-\frac{(x-(\mu -\lambda)t)^{2}}{2t}}dx\\
     = & c_{1}\int_{-(\mu-\lambda)\sqrt{t}}^{-\delta}\frac{1}{\sqrt{2\pi}}e^{-\frac{u^{2}}{2}}du.\\
    \end{aligned}
  \end{equation}
  As for the second term, direct calculation gives that 
  \begin{equation}
    \begin{aligned}
      (II) 
      =&\int_{0}^{\infty}\frac{e^{t}}{\sqrt{2\pi t}}e^{-\mu x}dx\int_{0}^{\infty}e^{-\frac{[u+(z-x+\lambda t)]^{2}}{2t}}du\\
      =&e^{(1-\frac{\lambda^{2}}{2})t}\int_{0}^{\infty}e^{-(\mu-\lambda) x}dx\int_{0}^{\infty}\frac{1}{\sqrt{2\pi t}}e^{-\frac{[u+(z-x)]^{2}}{2t}}e^{-\lambda u-\lambda z}du.
    \end{aligned}
  \end{equation}
Note that $\int_{0}^{\infty}e^{-(\mu-\lambda) x}dx\int_{0}^{\infty}\frac{1}{\sqrt{2\pi t}}e^{-\frac{[u+(z-x)]^{2}}{2t}}e^{-\lambda u-\lambda z}du$ is finite as $\mu > \lambda$; therefore, one can find a constant $c_{2}$ depending on $z$ such that the above is bounded by $ c_{2}e^{(1-\frac{\lambda^{2}}{2})t}.$ 
By a similar computation used to prove the bound on $(I)$, we get
  \begin{equation}
    \mathbb{P}\biggl(\exists i\in I, k \le {n}^{i}(t):p_{i}+\chi^{i}_{k}(t)-\lambda t \ge z, p_{i}<-(\mu - \lambda)t-\delta\sqrt{t}\biggr) \le c_{3}\int_{\delta}^{\infty}e^{-\frac{u^{2}}{2}}du. \label{2.20}
  \end{equation}
  Therefore
     \begin{align*}
      &\mathbb{P} \biggl(\exists i\in \mathbb N, k \le n^{i}(t):p_{i}+\chi_k^i(t)-\lambda t \ge z,p_i\notin[-(\mu - \lambda)t-\delta\sqrt{t},-(\mu -\lambda)t+\delta\sqrt{t}]\biggr)\\
  \le&c_{1}\int_{-(\mu-\lambda)\sqrt{t}}^{-\delta}\frac{1}{\sqrt{2\pi}}e^{-\frac{u^{2}}{2}}du +c_{2}e^{(1-\frac{\lambda^{2}}{2})t}+ c_{3}\int_{\delta}^{\infty}e^{-\frac{u^{2}}{2}}du,
     \end{align*}
which concludes the proof of this lemma once we choose $\delta, t_0$ sufficiently large.
\end{proof}

\begin{proof}[Proof of Proposition \ref{Identify_DPPP}]
Lemma~\ref{Main_contribution in sec3} suggests that we should focus on the region $[-(\mu-\lambda)t - \delta\sqrt{t},-(\mu - \lambda)t + \delta\sqrt{t}]$.
Now we want to understand if we start from the initial distribution $PPP(\frac{1}{\sqrt{2\pi}}e^{-\mu x}dx)$ restricted to the region $[-(\mu-\lambda)t - \delta\sqrt{t},-(\mu - \lambda)t + \delta\sqrt{t}]$, where does $\theta^{\lambda}_{t}$ converge to. 
To this end, we require the following lemma, which describes the asymptotic behaviour of the Laplace transform of the BBM with drift, conditioned on the event that the maximum is unusually large. 
We use a similar approach as in [\cite{bovier2014extremal}-Corollary 7.6] to prove this.
\begin{lemma}\label{gap_convergence}
    For any fixed $\delta >0$ and $f$ of the form \ref{Form of f} with $supp (f) \subset [-K_f, \infty), $ for some $K_f>0$, uniformly in $x\in[-(\mu -\lambda)t-\delta\sqrt{t},-(\mu -\lambda)t+\delta\sqrt{t}]$, we have
\begin{equation}
  \begin{aligned}
    &\lim_{t\to\infty}\mathbb{E}[1-e^{-\sum_{i = 1}^{n(t)} f(x_{i}+\chi_{k}^{i}(t) - \lambda t)}|x+M_{t} -\lambda t\ge -K_{f}]\\
    & = \int_{-\infty}^{\infty}\mu \mathbb{E}[1-e^{-\int f(y+z)\mathcal{D}^{\mu}(dz) }]e^{-\mu y}e^{-\mu K_{f}}dy. \label{Eqn_contional M_t_app1}
 \end{aligned}
\end{equation}
\end{lemma}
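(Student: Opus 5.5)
We need to prove Lemma \ref{gap_convergence}: the conditional Laplace-type expectation given $x+M_t-\lambda t\ge -K_f$ converges, uniformly for $x$ in the window, to the stated integral.

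We work at the level of F-KPP ratios. The integrand $1-e^{-\sum_k f(x+\chi_k(t)-\lambda t)}$ vanishes unless $x+M_t-\lambda t\ge -K_f$, because $f$ is supported in $[-K_f,\infty)$. Hence the numerator of the conditional expectation equals $u_\phi(t,\lambda t-x)$, by \eqref{Laplace-comp-using-F-KPP-Step-1}, and the denominator equals $\mathbb{P}(M_t\ge \lambda t-x-K_f)=u_M(t,\lambda t-x-K_f)$, by \eqref{F-KPP-Formula-For-M}. So the left-hand side is
\[
\frac{u_\phi(t,\lambda t-x)}{u_M(t,\lambda t-x-K_f)} .
\]
Put $z=(\lambda-\sqrt2)t-x$. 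For $x$ in the window, $z=\alpha(\lambda)t+O(\sqrt t)=\alpha(\lambda)t+o(t)$, so Lemma \ref{C(a) definition} applies to both numerator and denominator, uniformly in the window. The normalizing factors $e^{\sqrt2 z}e^{z^2/2t}\sqrt t$ cancel up to the shift $K_f$ in the denominator. That shift contributes the factor $e^{-\sqrt2K_f}e^{-zK_f/t}\to e^{-(\sqrt2+\alpha(\lambda))K_f}=e^{-\mu K_f}$. Equivalently, by \eqref{shift-relation-for-C(a)}, $u_M(t,\lambda t-x-K_f)$ is asymptotic to $C_M(\alpha(\lambda))e^{\mu K_f}$ times the common normalization. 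The ratio therefore converges uniformly to
\[
\frac{C(f,\alpha(\lambda))}{C_M(\alpha(\lambda))}\,e^{-\mu K_f}.
\]

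It remains to identify $C(f,\alpha(\lambda))$ as $C_M(\alpha(\lambda))\int \mu\,\mathbb{E}[1-e^{-\int f(y+z)\mathcal{D}^\mu(dz)}]e^{-\mu y}dy$. This is the main step. Following [\cite{bovier2014extremal}, Corollary 7.6], we decompose on the position of the maximum. Write $u_\phi(t,\lambda t-x)=\mathbb{E}[(1-e^{-\langle f,\cdot\rangle})1_{x+M_t-\lambda t\in dy}]$ and condition on $M_t$. Given $x+M_t-\lambda t=y$, the configuration seen from the tip, $\sum_k\delta_{\chi_k(t)-M_t}$, converges to $\mathcal{D}^\mu$ by \eqref{decoration_family}, since $M_t\approx\mu t$ in this regime (indeed $\lambda t-x\approx\mu t$ on the window). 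The density of the tip position in $y$, normalized by $u_M$, converges to $\mu e^{-\mu(y+K_f)}$ on $[-K_f,\infty)$; this follows again from Lemma \ref{C(a) definition} and \eqref{shift-relation-for-C(a)}, because $u_M(t,\lambda t-x-y)/u_M(t,\lambda t-x-K_f)\to e^{-\mu(y+K_f)}$ uniformly. Combining the two gives the integral. The delicate part is justifying the interchange: the decoration convergence must hold uniformly under conditioning on $M_t\ge \rho t+ O(\sqrt t)$ rather than exactly $\ge\mu t$. We would handle this by monotonicity in the conditioning level, using that the tilted process has the same limit for $\rho$ in a shrinking window around $\mu$, as in Bovier--Hartung. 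Large $y$ is controlled by dominated convergence, since the weight $e^{-\mu y}$ is integrable and the integrand is bounded by $1$.

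An alternative check that avoids the conditioning subtlety is the following. Both sides are of the form $e^{-\mu K_f}$ times a functional of $f$. Using \eqref{supercritical-E-Laplace-fromula}, $C(f,\alpha(\lambda))=-\log\mathbb{E}[e^{-\langle f,\widetilde{\mathcal E}^\lambda_\infty\rangle}]$. So it suffices to know that $\widetilde{\mathcal E}^\lambda_\infty$ is a decorated Poisson point process with tip intensity $\mu C_M e^{-\mu y}dy$ and decoration $\mathcal{D}^\mu$. That, however, is exactly what the lemma is meant to establish, so we must use the direct conditioning argument above; the uniformity in $x$ is the main obstacle.
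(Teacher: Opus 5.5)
Your core step is the paper's proof. You decompose on the overshoot $x+M_t-\lambda t$ of the maximum, with an exponential($\mu$) overshoot in the limit and the configuration seen from the tip converging to $\mathcal{D}^\mu$ independently of it. The paper simply quotes Corollary 7.6 of \cite{bovier2014extremal}: under $\{x+M_t-\lambda t\ge -K_f\}$ and uniformly over the window, $(D_t,\,x+M_t-\lambda t+K_f)$, with $D_t=\sum_{k}\delta_{\chi_k(t)-M_t}$, converges jointly to an independent pair $(D,\mathbf{e})$. It then uses Proposition 7.5 there to get $D\overset{d}{=}\mathcal{D}^\mu$ irrespective of $\delta$ and $K_f$.

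These two results are what settle your ``delicate part''. Note that \eqref{decoration_family} alone does not give conditional convergence given the exact tip position at level $\mu t+O(\sqrt t)$, and no monotonicity in the conditioning level is needed.

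Your first step, the ratio tending to $C(f,\alpha(\lambda))e^{-\mu K_f}/C_M(\alpha(\lambda))$, is not needed for the lemma. The paper uses it only afterwards, in the proof of Proposition~\ref{Identify_DPPP}, to deduce the formula for $C(f,\alpha(\lambda))$.

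There is also a sign slip: the tail ratio governing the overshoot is $u_M(t,\lambda t-x+y)/u_M(t,\lambda t-x-K_f)\to e^{-\mu(y+K_f)}$, not $u_M(t,\lambda t-x-y)/u_M(t,\lambda t-x-K_f)$.
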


We postpone the proof of this lemma until the end of this section. 
By Lemma \ref{C(a) definition}, for every fixed $\delta >0$, uniformly in $x\in[-(\mu-\lambda) t-\delta\sqrt{t},-(\mu -\lambda) t+\delta\sqrt{t}]$,
\begin{equation}
  u_{M}(t,\lambda t+x)t^{\frac{1}{2}}e^{\lambda x}e^{\frac{x^{2}}{2t}}e^{(\frac{\lambda^{2}}{2}-1)t} \to C_{M}(\alpha(\lambda)), \quad t\to\infty. \label{Eqn_convergence C_M_app1}
\end{equation}
By Lemma \ref{gap_convergence} and \eqref{Eqn_convergence C_M_app1},
\begin{equation}
    \begin{aligned}
        &\int_{-(\mu -\lambda)t-\delta\sqrt{t}}^{-(\mu-\lambda)t+\delta\sqrt{t}}u_{\phi}(t,\lambda t-x)e^{-\mu x}dx\\
        =&\int_{-(\mu -\lambda)t-\delta\sqrt{t}}^{-(\mu-\lambda)t+\delta\sqrt{t}}\mathbb{E}[1-e^{-\sum_{i = 1}^{n(t)} f(x_{i}+\chi_{k}^{i}(t) - \lambda t)}|x+M_{t} -\lambda t\ge -K_{f}]\\
        & \times \mathbb{P}(x+M_{t} -\lambda t\ge -K_{f})e^{-\mu x}dx\\
        =&\int_{-(\mu -\lambda)t-\delta\sqrt{t}}^{-(\mu-\lambda)t+\delta\sqrt{t}}\mathbb{E}[1-e^{-\sum_{i = 1}^{n(t)} f(x_{i}+\chi_{k}^{i}(t) - \lambda t)}|x+M_{t} -\lambda t\ge -K_{f}]\\
        & \times u_{M}(t,\lambda t-x-K_{f})e^{-\mu x}dx.
    \end{aligned}
\end{equation}
First taking $t\to\infty$ and then $\delta\to\infty$, from \eqref{Eqn_contional M_t_app1} and \eqref{Eqn_convergence C_M_app1}, we get 
\begin{equation}
    \begin{aligned}
        &\lim_{\delta \to\infty}\lim_{t\to\infty}\frac{1}{\sqrt{2\pi}}\int_{-(\mu -\lambda)t-\delta\sqrt{t}}^{-(\mu-\lambda)t+\delta\sqrt{t}}u_{\phi}(t,\lambda t-x)e^{-\mu x}dx\\
        =&\mu C_{M}(\alpha(\lambda))\int_{-\infty}^{\infty}\mathbb{E}[1-e^{-\int f(y+z)\mathcal{D}^{\mu}(dz) }]e^{-\mu y}dy\\
    &\qquad \times \lim_{\delta\to \infty}\lim_{t\to\infty}\int_{-(\mu -\lambda)t-\delta\sqrt{t}}^{-(\mu-\lambda)t+\delta\sqrt{t}}\frac{1}{\sqrt{2\pi t}}e^{-\frac{[x+(\mu-\lambda)t]^{2}}{2t}}dx\\
    =&\mu C_{M}(\alpha(\lambda))\int_{-\infty}^{\infty}\mathbb{E}[1-e^{-\int f(y+z)\mathcal{D}^{\mu}(dz) }]e^{-\mu y}dy. 
    \end{aligned}
\end{equation}
Our last step is to show that first letting $t\to\infty$, and then $\delta\to\infty$,
\[\int_{x\in [-(\mu - \lambda)t-\delta\sqrt{t},-(\mu -\lambda)t+\delta\sqrt{t}]^c}u_{\phi}(t,\lambda t-x)e^{-\mu x}dx \to 0. \]
Observe that  
\begin{equation}
    \begin{aligned}
    &\int_{x>-(\mu-\lambda)t+\delta\sqrt{t}}u_{\phi}(t,\lambda t-x)e^{-\mu x}dx\\
    \le&\int_{x>-(\mu-\lambda)t+\delta\sqrt{t}}\mathbb{P}(x+ M_{t}>\lambda t -K_{f})e^{-\mu x}dx,
    \end{aligned}
\end{equation}
which is what we have estimated in \eqref{estimae in Lem3.2}. Therefore
\begin{equation}
    \lim_{\delta \to\infty}\lim_{t\to\infty}\int_{x>-(\mu-\lambda)t+\delta\sqrt{t}}u_{\phi}(t,\lambda t-x)e^{-\mu x}dx = 0.
\end{equation}
Similarly, we get
\begin{equation}
    \lim_{\delta \to\infty}\lim_{t\to\infty}\int_{x<-(\mu-\lambda)t-\delta\sqrt{t}}u_{\phi}(t,\lambda t-x)e^{-\mu x}dx = 0.
\end{equation}
Putting these together, we have 
\begin{equation}\label{Eqn_limit F-KPP integral}
  \begin{aligned}
    \lim_{t\to\infty} \frac{1}{\sqrt{2\pi}}\int_{-\infty}^{\infty}u_{\phi}(t,\lambda t+x)e^{\mu x}dx
   =\mu C_{M}(\alpha(\lambda))\int_{-\infty}^{\infty}\mathbb{E}[1-e^{-\int f(y+z)\mathcal{D}^{\mu}(dz) }]e^{-\mu y}dy. 
  \end{aligned}
\end{equation}
By Lemma~\ref{gap_convergence}, uniformly in $x\in[-(\mu -\lambda)t-\delta\sqrt{t},-(\mu -\lambda)t+\delta\sqrt{t}]$, 
\begin{equation}
\begin{aligned}
     &\lim_{t\to\infty}\frac{\mathbb{E}[1-e^{-\sum_{i = 1}^{n(t)} f(x_{i}+\chi_{k}^{i}(t) - \lambda t)}]}{\mathbb{P}(x+M_{t} -\lambda t\ge -K_{f})}\\
     =&\lim_{t\to\infty}\mathbb{E}[1-e^{-\sum_{i = 1}^{n(t)} f(x_{i}+\chi_{k}^{i}(t) - \lambda t)}|x+M_{t} -\lambda t\ge -K_{f}]\\
      =&\int_{-\infty}^{\infty}\mu \mathbb{E}[1-e^{-\int f(y+z)\mathcal{D}^{\mu}(dz) }]e^{-\mu y}e^{-\mu K_{f}}dy.
\end{aligned}
\end{equation}
Rewriting this in terms of the solution of the F-KPP equation gives
\begin{equation}\label{L(f)_2}
    \lim_{t\to\infty}\frac{u_{\phi}(t,\lambda t-x)}{u_{M}(t,\lambda t-x-K_f)} = \int_{-\infty}^{\infty}\mu \mathbb{E}[1-e^{-\int f(y+z)\mathcal{D}^{\mu}(dz) }]e^{-\mu y}e^{-\mu K_{f}}dy.
\end{equation}
Applying Lemma \ref{C(a) definition} again, uniformly in $x\in[-(\mu -\lambda)t-\delta\sqrt{t},-(\mu -\lambda)t+\delta\sqrt{t}],$
\begin{equation}\label{L(f)_3}
    \lim_{t\to\infty}\frac{u_{\phi}(t,\lambda t-x)}{u_{M}(t,\lambda t-x-K_f)} = \frac{C(f,\alpha(\lambda))}{C_{M}(\alpha(\lambda))e^{\mu K_f}}.
\end{equation}
Combining \eqref{L(f)_2} and \eqref{L(f)_3}, we get
\begin{equation} \label{Eqn_C_M and C_f}
   \mu C_{M}(\alpha(\lambda))\int_{-\infty}^{\infty}\mathbb{E}[1-e^{-\int f(y+z)\mathcal{D}^{\mu}(dz) }]e^{-\mu y}dy = C(f,\alpha(\lambda)).
\end{equation}
Using \eqref{Laplace from PPP}, \eqref{Eqn_limit F-KPP integral} and \eqref{Eqn_C_M and C_f},
\begin{equation}
    \lim_{t\to\infty}\mathbb{E}\left[e^{-\langle f,\theta^\lambda_t\rangle} \right] = e^{-C(f,\alpha(\lambda))}.
\end{equation}
On the other hand, from \eqref{Laplace_DPPP} we know
\begin{equation}
    \mathbb{E}\left[e^{-\langle f, \widetilde{\mathcal E}_{\infty}^\lambda\rangle} \right] = e^{-C(f,\alpha(\lambda))}.
\end{equation}
This means
\begin{equation}
    \lim_{t\to\infty}\mathbb{E}\left[e^{-\langle f,\theta^\lambda_t\rangle} \right] = \mathbb{E}\left[e^{-\langle f, \widetilde{\mathcal E}_{\infty}^\lambda\rangle} \right],
\end{equation}
which concludes the proof.
\end{proof}
\begin{proof}[Proof of Lemma~\ref{gap_convergence}]
    Let us denote $D_t = \sum_{k = 1}^{n(t)}\delta_{\chi_k(t)-M_t}$. 
    By Corollary 7.6 in \cite{bovier2014extremal}, we get that, uniformly in $x\in[-(\mu -\lambda)t-\delta\sqrt{t},-(\mu -\lambda)t+\delta\sqrt{t}]$, conditioned on the event $\{x+M_t-\lambda t \ge -K_{f} \}$, the pair $(D_t, x+M_{t}-\lambda t +K_{f})$ jointly converges to an independent pair $(D,\textbf{e})$, where $\textbf{e}=\exp(\mu)$ (exponential random variable with rate $\mu$), and $D$ is defined as 
    \begin{equation}\label{Def_D}
        \mathbb{P}\left(D \in \cdot \right):= \lim_{t\to\infty}\mathbb{P}\left(\sum_{k = 1}^{n(t)}\delta_{\chi_k(t)-M_t}\in \cdot\mid x+M_t-\lambda t \ge -K_{f}\right).
    \end{equation}
    Moreover, Proposition 7.5 in \cite{bovier2014extremal} tells that the above limit does not depend on the particular values of $\delta$ and $K_{f}$. Therefore, \eqref{Def_D} is the same as 
    \begin{equation}
        \mathbb{P}\left(D \in \cdot \right)= \lim_{t\to\infty}\mathbb{P}\left(\sum_{k = 1}^{n(t)}\delta_{\chi_k(t)-M_t}\in \cdot\mid M_t-\mu t\ge 0\right),
    \end{equation}
    which gives $D \stackrel{d}{=}\mathcal D^{\mu}$.
   Therefore
    \begin{equation}
    \begin{aligned}
        &\lim_{t\to\infty}\mathbb{E}[1-e^{-\sum_{i = 1}^{n(t)} f(x_{i}+\chi_{k}^{i}(t) - \lambda t)}|x+M_{t} -\lambda t\ge -K_{f}] \\
        =&\lim_{t\to\infty}\mathbb{E}[1-e^{-\int f(x+M_t-\lambda t + z)D_{t}(dz)}|x+M_{t} -\lambda t\ge -K_{f}]\\
        =&\mathbb{E}[1-e^{-\int f(\textbf{e}-K_f + z)D(dz)}].
         \end{aligned}
    \end{equation}
    Since $\textbf{e}$ and $D$ are independent and $D$ has the same distribution as $\mathcal D^{\mu}$, we get 
    \begin{equation}
        \begin{aligned}
            &\lim_{t\to\infty}\mathbb{E}[1-e^{-\sum_{i = 1}^{n(t)} f(x_{i}+\chi_{k}^{i}(t) - \lambda t)}|x+M_{t} -\lambda t\ge -K_{f}] \\
        =&\int_{0}^{\infty}\mu\mathbb{E}[1-e^{-\int f(x-K_{f}+z)\mathcal D^{\mu}(dz)}]e^{-\mu x}dx\\
        =&\int_{-K_{f}}^{\infty}\mu\mathbb{E}[1-e^{-\int f(y+z)\mathcal D^{\mu}(dz)}]e^{-\mu y}e^{-\mu K_{f}}dy.
        \end{aligned}
    \end{equation}
    Finally, the domain of the integration above can be extended to the whole real line because $f$ is supported on $[-K_f,\infty)$ and $\mathcal D^{\mu}$ is supported on $(-\infty,0]$. This completes the proof.
\end{proof}

\subsection{Intensity measure of $\widetilde{\mathcal{E}}_\infty^\lambda$ for $\lambda>\sqrt{2}$.}

It was proven in \cite{Lisa_P} that for the critical extremal point process $\widetilde{\mathcal{E}}_\infty$, the decoration $\mathcal{D}$ satisfies $\mathbb{E}[\mathcal{D}([x,0])] \sim Ce^{-\sqrt{2}x}$ as $x\to -\infty$ for some constant $C>0$. Together with \eqref{tilde-def}, this easily implies that $\widetilde{\mathcal{E}}_\infty$ has infinite intensity, i.e. $\mathbb{E}[\widetilde{\mathcal{E}}_\infty([-K, K])] = +\infty $ for all $K>0$. On the other hand, the supercritical extremal point process $\widetilde{\mathcal{E}}_\infty^\lambda$ has a finite intensity given as follows.

\begin{proposition}\label{supercritical-intensity}
Let $\lambda>\sqrt2$ and $ \mu=\lambda+\sqrt{\lambda^2-2}. $ Then, for every $f\in C_c^+(\mathbb R)$, 
 \begin{equation} \mathbb E\big[ \langle f,\widetilde{\mathcal E}^{\lambda}_{\infty}\rangle \big] = \frac{1}{\sqrt{2\pi}}\int_{\mathbb R} f(x)e^{-\mu x}\,dx . \end{equation} 
\end{proposition}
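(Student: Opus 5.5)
Since $\widetilde{\mathcal E}_\infty^\lambda$ is the limit in law of $\theta_t^\lambda$ started from $\theta_0=PPP(\frac{1}{\sqrt{2\pi}}e^{-\mu x}dx)$, I would compute the intensity through this approximation. First, for each $t$, the intensity of $\theta_t^\lambda$ equals the intensity of $\theta_0$. Indeed, by Campbell's formula and the many-to-one lemma (Lemma \ref{Lemma many to one}),
\[
\mathbb E[\langle f,\theta_t^\lambda\rangle]=\frac{1}{\sqrt{2\pi}}\int e^{t}\,\mathbb E[f(x+B_t-\lambda t)]\,e^{-\mu x}dx .
\]
Substituting $y=x+B_t-\lambda t$ and using $\mathbb E[e^{\mu B_t}]=e^{\mu^2t/2}$, this becomes $\frac{1}{\sqrt{2\pi}}e^{(1-\lambda\mu+\mu^2/2)t}\int f(y)e^{-\mu y}dy$. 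The exponent vanishes, because $\mu$ solves $\frac12\mu^2-\lambda\mu+1=0$; this is the ODE computation already given in the Appendix. So $\mathbb E[\langle f,\theta_t^\lambda\rangle]=\frac{1}{\sqrt{2\pi}}\int f e^{-\mu x}dx$ for all $t$.

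Fatou's lemma along the convergence in law from Proposition \ref{Identify_DPPP} then gives the upper bound $\mathbb E\langle f,\widetilde{\mathcal E}_\infty^\lambda\rangle\le\frac{1}{\sqrt{2\pi}}\int fe^{-\mu x}dx$. For the matching lower bound I need uniform integrability of $\langle f,\theta_t^\lambda\rangle$. This is the main obstacle, since a uniform second moment bound is plausibly false: in the critical case the analogous intensity is infinite.

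To get uniform integrability, I would follow Lemma \ref{Main_contribution in sec3}. Split $\theta_0$ into atoms inside and outside the window $W_t=[-\gamma t-\delta\sqrt t,-\gamma t+\delta\sqrt t]$, with $\gamma=\mu-\lambda$. By the same Gaussian change of measure as above, the expected contribution of atoms outside $W_t$ equals $\frac{1}{\sqrt{2\pi}}\int fe^{-\mu y}dy\cdot\mathbb P(|N(0,1)|>\delta)+o(1)$, which is small for large $\delta$. So it suffices to show that the window contribution $X_t:=\langle f,\theta_{t,W}^\lambda\rangle$ is uniformly integrable. I would first try a truncated second moment, using the many-to-two formula on the decomposition $X_t=\sum_i Y_i$ into Poissonian clusters. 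The variance is $\frac{1}{\sqrt{2\pi}}\int_{W_t}\mathbb E[(\sum_k f(x+\chi_k(t)-\lambda t))^2]e^{-\mu x}dx$, plus the squared mean. In the many-to-two integral, the two-particle term (branching at time $s$) involves the exponential moment $e^{2\mu\cdot}$ of the ancestor. Because $\mu>\sqrt2$, this may blow up for early branching times $s$.

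If the second moment fails, I would instead truncate on the path. On the event that the spine in the window stays below a barrier $\lambda s+\beta(s)$, the many-to-two integrals become finite and bounded uniformly in $t$. The expected contribution of the complementary event tends to zero after $t\to\infty$, $\delta\to\infty$ and barrier tuning; this is again a Girsanov/ballot estimate on the tilted Brownian motion with drift $\mu-\lambda$. Combining Fatou, the vanishing expected mass from bad events, and $L^2$-boundedness of the good part yields convergence of the expectations, hence the stated identity.

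An alternative route, useful as a cross-check, uses the representation \eqref{decoration-decomp-for-super-critical}:
\[
\mathbb E\langle f,\widetilde{\mathcal E}^\lambda_\infty\rangle=\mu C_M(\alpha(\lambda))\int \mathbb E\Big[\int f(y+z)\mathcal D^\mu(dz)\Big]e^{-\mu y}dy .
\]
This reduces the claim to $\mathbb E\!\int e^{\mu z}\mathcal D^\mu(dz)=1/(\sqrt{2\pi}\,\mu C_M)$. That identity follows by differentiating $C(\rho f,\alpha)$ at $\rho=0$, via \eqref{Eqn_C_M and C_f} and \eqref{C(a)-specific-to-lambda}, combined with the linearized F-KPP value $u_\phi\approx\rho e^t\mathbb E f$. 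Justifying the exchange of the limit and $\partial_\rho$ is the same uniform integrability issue.
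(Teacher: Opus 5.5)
Your upper bound (intensity invariance via many-to-one, then Fatou along Proposition~\ref{Identify_DPPP}) is exactly the paper's. Your estimate for the atoms outside $W_t$ is also correct.

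The gap is the lower bound. It rests entirely on uniform integrability of the window part, and that is only sketched. Your first attempt provably fails on part of the range. Optimizing the many-to-two integral over the branching time $s=\sigma t$ and the ancestor's displacement, the window-restricted two-particle term grows at exponential rate $\max_{\sigma\in[0,1]}\bigl(\tfrac{\mu^2}{2}+1-\sigma-\tfrac{\mu^2}{1+\sigma}\bigr)$. This equals $\tfrac12(\mu-2)^2>0$ whenever $\mu<2$, i.e.\ $\sqrt2<\lambda<3/2$. The dominant configuration branches at $s=(\mu-1)t$ from an ancestor displaced by $2(\mu-1)t$, which is above the line $\mu s$. (For $\lambda>3/2$ even the untruncated second moment is bounded, and your argument does close.) So near $\lambda=\sqrt2$ a barrier is indispensable, and that step is not carried out. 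The tilted spine travels at speed $\mu$, so the barrier must sit at $\mu s$ plus an envelope growing faster than $\sqrt{s\wedge(t-s)}$; otherwise it is violated with probability not tending to zero. Beyond that, the uniform-in-$t$ bound on the truncated many-to-two integrals and the ballot estimate for the discarded mass are asserted, not proved.

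The paper avoids uniform integrability altogether with an exact identity. Write $\pi_sg(x):=e^s\mathbb E[g(x+B_s-\lambda s)]$. The mild form of F-KPP reads $u_\phi(t,\lambda t-x)=\pi_t(1-e^{-f})(x)-\int_0^t\pi_{t-s}\bigl(u_\phi^2(s,\lambda s-\cdot)\bigr)(x)\,ds$. Your own Gaussian computation shows $\int\pi_sg\,e^{-\mu x}dx=\int g\,e^{-\mu x}dx$ for every $g\ge0$. Integrating against $e^{-\mu x}dx$ and letting $t\to\infty$ gives
\[
-\log\mathbb E\bigl[e^{-\langle f,\widetilde{\mathcal E}^\lambda_\infty\rangle}\bigr]=\frac{1}{\sqrt{2\pi}}\Bigl[\int(1-e^{-f})e^{-\mu x}dx-\int_0^\infty\!\!\int u_\phi^2(s,\lambda s-x)e^{-\mu x}\,dx\,ds\Bigr].
\]
Apply this to $\varepsilon f$. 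By Jensen, $\mathbb E\langle f,\widetilde{\mathcal E}^\lambda_\infty\rangle$ is at least $\varepsilon^{-1}$ times the right-hand side. The quadratic defect is $o(\varepsilon)$:
\begin{itemize}
\item Use $u_\varepsilon\le 1\wedge\varepsilon\pi_sf$ together with the Chernoff bound $\pi_sf(x)\le Ce^{(1-\lambda^2/2)s}e^{\lambda x}$.
\item The $x<0$ part is then $O(\varepsilon)$, using $\lambda^2>2$ and $2\lambda>\mu$.
\item The $x>0$ part vanishes by dominated convergence.
\end{itemize}
This is precisely the interchange of $t\to\infty$ with $\partial_\rho|_{\rho=0}$ that you flagged in your alternative route. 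The Duhamel identity makes the error term uniform in $t$, so no uniform integrability is needed.
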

To prove this proposition, we use the following lemma, which gives an alternate representation of the Laplace transform for $\widetilde{\mathcal E}^{\lambda}_{\infty}$, that may also be of independent interest.

   \begin{lemma}\label{lemma:laplace_supercritical} For every $f\in C_c^+(\mathbb R)$, \begin{equation}\label{laplace_supercritical} \begin{aligned} &\mathbb E\big[ e^{-\langle f,\widetilde{\mathcal E}^{\lambda}_{\infty}\rangle} \big]\\
   &= \exp\Bigg\{ -\frac{1}{\sqrt{2\pi}}\Bigg[ &\int_{\mathbb R}(1-e^{-f(x)})e^{-\mu x}\,dx - \int_0^\infty\int_{\mathbb R} u_{\phi}^2(s,\lambda s-x)e^{-\mu x}\,dx\,ds \Bigg] \Bigg\}, \end{aligned} \end{equation} where $u_{\phi}$ is the solution to the F-KPP equation with the initial condition \[ u_{\phi}(0,x)=1-e^{-f(-x)}. \] \end{lemma}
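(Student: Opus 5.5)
The plan is to start from the PPP representation already established in the Appendix. By \eqref{Laplace from PPP} (valid for $f\in C_c^+(\mathbb R)$ by the same computation) and Proposition \ref{Identify_DPPP}, we have
\[
\mathbb E\big[e^{-\langle f,\widetilde{\mathcal E}^\lambda_\infty\rangle}\big]=\lim_{t\to\infty}\exp\Big(-\frac{1}{\sqrt{2\pi}}\int_{\mathbb R}u_\phi(t,\lambda t-x)e^{-\mu x}\,dx\Big).
\]
It therefore suffices to identify the limit of
\[
V(t):=\int_{\mathbb R}u_\phi(t,\lambda t-x)e^{-\mu x}\,dx.
\]
The idea is to differentiate $V$ in $t$ using the F-KPP equation \eqref{F-KPP}. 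Set $w(t,x):=u_\phi(t,\lambda t-x)$. Then $w$ solves
\[
\partial_t w=\tfrac12\partial_x^2 w+\lambda\partial_x w+w-w^2 .
\]
Integrating against $e^{-\mu x}$ and integrating by parts twice moves the linear part onto the adjoint operator $\tfrac12\partial_x^2-\lambda\partial_x+1$ applied to $e^{-\mu x}$. This gives $(\tfrac12\mu^2+\lambda\mu+1)e^{-\mu x}$. Since $\mu=\lambda+\sqrt{\lambda^2-2}$ is a root of $\tfrac12\mu^2-\lambda\mu+1=0$, I need to recheck the signs carefully; the choice of $\mu$ in the Appendix heuristic (the ODE $\tfrac12\eta''+\lambda\eta'+\eta=0$ with $\eta=e^{-\mu x}$) is exactly the statement that the linear part annihilates $e^{-\mu x}$. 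Hence
\[
V'(t)=-\int_{\mathbb R}w(t,x)^2e^{-\mu x}\,dx .
\]
Integrating from $0$ to $t$ then yields
\[
V(t)=V(0)-\int_0^t\!\!\int_{\mathbb R}u_\phi^2(s,\lambda s-x)e^{-\mu x}\,dx\,ds,
\]
with $V(0)=\int(1-e^{-f(x)})e^{-\mu x}\,dx$, because $u_\phi(0,-x)=1-e^{-f(x)}$. Letting $t\to\infty$ and using monotone convergence on the nonnegative double integral (the limit is finite since $V(t)\ge0$ and $V(t)$ converges) gives \eqref{laplace_supercritical}.

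The main obstacle is justifying the integration by parts and the differentiation under the integral, that is, showing that the boundary terms vanish at $x\to\pm\infty$.

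At $x\to+\infty$, the weight $e^{-\mu x}$ decays. Meanwhile $u_\phi\le1$ and its spatial derivatives are bounded for $s>0$ by parabolic regularity, so the boundary terms vanish. Near $s=0$ I would instead work with a smooth approximation, or integrate in time first (mild formulation) to avoid needing derivatives.

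At $x\to-\infty$, $e^{-\mu x}$ blows up. Here I would use Lemma \ref{when-does-u-go-to-zero}:
\[
u_\phi(s,\lambda s-x)\le e^s\,\mathbb P\big(|x+B_s-\lambda s|\le K\big),
\]
which has Gaussian decay $e^{-x^2/2s}$. This beats $e^{-\mu|x|}$ locally uniformly in $s$, and analogous Gaussian bounds hold for the derivatives via heat-kernel representations.

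A cleaner alternative that avoids derivatives altogether is to use the Duhamel formula
\[
u(t)=e^{t}P_t u_0-\int_0^t e^{t-s}P_{t-s}u^2(s)\,ds
\]
and integrate it against $e^{-\mu x}$. This relies on the identity $e^t\int P_t g(\lambda t-x)e^{-\mu x}\,dx=\int g(-x)e^{-\mu x}\,dx$, which is the many-to-one invariance of the measure $e^{-\mu x}\,dx$ (the mean-preservation identity \eqref{Eqn_equal expectation_app1}). It then only requires Fubini, justified by the Gaussian bounds above. I would carry out this Duhamel version.
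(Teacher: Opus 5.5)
Your Duhamel route is correct and is essentially the paper's own proof: write $u_\phi(t,\lambda t-x)=\pi_t(1-e^{-f})(x)-\int_0^t\pi_{t-s}\bigl(u_\phi^2(s,\lambda s-\cdot)\bigr)(x)\,ds$ with $\pi_tg(x)=e^t\mathbb E[g(x+B_t-\lambda t)]$, integrate against $e^{-\mu x}\,dx$ using the invariance $\int\pi_tg\,e^{-\mu x}dx=\int g\,e^{-\mu x}dx$ (which holds because $1-\lambda\mu+\mu^2/2=0$), and let $t\to\infty$ via Proposition~\ref{Identify_DPPP}. Two minor remarks: in your discarded PDE computation the drift has the wrong sign, since $w(t,x)=u_\phi(t,\lambda t-x)$ solves $\partial_tw=\tfrac12\partial_x^2w-\lambda\partial_xw+w-w^2$, and the adjoint of its linear part is what annihilates $e^{-\mu x}$; also, no Gaussian bounds are needed to interchange the integrals, because after moving the time integral to the left-hand side every term is nonnegative and Tonelli applies.
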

   \begin{proof}[Proof of Lemma~\ref{lemma:laplace_supercritical}]
   Let $\theta_0^\lambda$ be a Poisson point process with intensity $\frac{1}{\sqrt{2\pi}}e^{-\mu x}dx$, and let $\theta_t^\lambda$ be the corresponding BBM flow with drift $\lambda$ started from $\theta_0^\lambda$ (see \eqref{theta_t-def}). 
   By the Laplace functional formula of Poisson point process,
   we have
   \[ \mathbb E\big[e^{-\langle f,\theta_t^\lambda\rangle}\big] = \exp\left\{ -\frac{1}{\sqrt{2\pi}}\int_{\mathbb R} u_{\phi}(t,\lambda t-x)e^{-\mu x}\,dx \right\}. \] 
   Recall, by Lemma~\ref{BBM-F-KPP-Connection}, 
   \[ u_{\phi}(t,\lambda t-x) = 1-\mathbb{E}[e^{-\sum_{k=1}^{n(t)} f(x + \chi_k(t) -\lambda t )}].\]
Define the family of maps $\{\pi_t\}_{t \ge 0}$ parametrized by $t$ and acting on space of all bounded measurable functions as
   \begin{equation}
       (\pi_t g)(x):= e^t\mathbb E[g(x+B_t-\lambda t)].  
   \end{equation}
   Using the strong Markov property at the first branching time and the branching property, we obtain
   \begin{equation}
   \label{mild_identity_u} u_{\phi}(t,\lambda t-x) = \pi_t(1-e^{-f(x)}) - \int_0^t \pi_{t-s} \big(u_{\phi}^2(s,\lambda s-x)\big)\,ds . 
   \end{equation}
    By \eqref{Eqn_equal expectation_app1}, for all $g \in C_c^+(\mathbb R),$
    \begin{equation}\label{Eqn_equal expectation_app0}
    \int_{\mathbb R}\pi_t( g(x))e^{-\mu x}\,dx = \int_{\mathbb R}g(x)e^{-\mu x}\,dx.
    \end{equation}
   Integrating \eqref{mild_identity_u} against $e^{-\mu x}dx$ and using \eqref{Eqn_equal expectation_app0} gives 
    \begin{equation}\label{integrated_mild_identity} \begin{aligned} \int_{\mathbb R} u_{\phi}(t,\lambda t-x)e^{-\mu x}\,dx = &\int_{\mathbb R} (1-e^{-f(x)})e^{-\mu x}\,dx \\ &- \int_0^t\int_{\mathbb R} u_{\phi}^2(s,\lambda s-x)e^{-\mu x}\,dx\,ds . \end{aligned} \end{equation} 
    Finally, by Proposition~\ref{Identify_DPPP}, $\theta_t^\lambda$ converges in distribution to $\widetilde{\mathcal E}^{\lambda}_{\infty}$. Letting $t\to\infty$ in both sides of \eqref{integrated_mild_identity} yields \eqref{laplace_supercritical}. \end{proof}
   
   \begin{proof}[Proof of Proposition~\ref{supercritical-intensity}] By Fatou's lemma and the invariance of the first moment for $\theta^\lambda_t$ (that is \eqref{Eqn_equal expectation_app1}), for every $f\in C_c^+(\mathbb R)$, 
   \begin{equation}
        \mathbb E\big[ \langle f,\widetilde{\mathcal E}^{\lambda}_{\infty}\rangle \big] \le \liminf_{t\to\infty} \mathbb E[\langle f,\theta_t^\lambda\rangle] = \frac{1}{\sqrt{2\pi}}\int_{\mathbb R}f(x)e^{-\mu x}\,dx . 
    \end{equation}
   It remains to prove the reverse inequality. Applying Lemma~\ref{lemma:laplace_supercritical} with $\varepsilon f$ in place of $f$ and using Jensen's inequality gives 
   \[ \begin{aligned} \mathbb E\big[ \langle f,\widetilde{\mathcal E}^{\lambda}_{\infty}\rangle \big] \ge \frac{1}{\sqrt{2\pi}}\left(\int_{\mathbb R} \frac{1-e^{-\varepsilon f(x)}}{\varepsilon} e^{-\mu x}\,dx - \frac1\varepsilon \int_0^\infty\int_{\mathbb R} u_{\varepsilon }^2(s,\lambda s-x)e^{-\mu x}\,dx\,ds\right) . \end{aligned} \] Here we use $u_{\varepsilon }$ to denote the solution of the F-KPP equation with initial condition $u_\varepsilon(0,x) = 1-e^{-\varepsilon f(-x)}$.
   The first term inside the bracket of the RHS above converges to $\int_{\mathbb R} f(x)e^{-\mu x}\,dx$ by dominated convergence as $\epsilon\downarrow 0$. We claim that 
   \begin{equation}\label{epsilon_error_vanishes} \lim_{\varepsilon\downarrow0} \frac1\varepsilon \int_0^\infty\int_{\mathbb R} u_{\varepsilon }^2(s,\lambda s-x)e^{-\mu x}\,dx\,ds = 0. \end{equation}
   Using the many to one lemma (Lemma \ref{Lemma many to one}) together with the simple inequality $1-e^{-x}\le x$ for $x\ge 0$,
   \begin{equation}\label{Eqn u eps bound}
       u_{\varepsilon }(s,\lambda s-x) \le 1\wedge \varepsilon \pi_s f(x).
   \end{equation}
    Moreover, if $\operatorname{supp}f\subset[-K_f,K_f]$, then \[ \pi_s f(x) \le C e^{(1-\lambda^2/2)s}e^{\lambda x}. \]
   Since $\lambda>\sqrt 2$ and $2\lambda>\mu$, we have
   \begin{equation}\label{epsilon_bound1}
   \begin{aligned}
     &\frac1\varepsilon \int_0^\infty\int_{-\infty}^0 u_{\varepsilon }^2(s,\lambda s-x)e^{-\mu x}\,dx\,ds \\
     &\le C\varepsilon \int_0^\infty e^{(2-\lambda^2)s}\,ds \int_{-\infty}^0 e^{(2\lambda-\mu)x}\,dx \xrightarrow[\varepsilon\downarrow0]{}0. 
     \end{aligned}
     \end{equation}
  For the remaining part, we use $u_{\varepsilon }^2\le \varepsilon \pi_s f$ to obtain 
   \[ 0\le \frac1\varepsilon u_{\varepsilon }^2(s,\lambda s-x)e^{-\mu x} \le e^{(1-\lambda^2/2)s} e^{-(\mu-\lambda)x}. \] 
   Thus, for all $\varepsilon>0$, $\frac{1}{\varepsilon}u_{\varepsilon }^2(s,\lambda s-x)e^{-\mu x}$ is integrable in $\mathbb R_+\times \mathbb R_+$ as $\lambda>\sqrt2$ and $\mu-\lambda=\sqrt{\lambda^2-2}>0$. 
   By \eqref{Eqn u eps bound}, $\frac{1}{\varepsilon}u_{\varepsilon }^2(s,\lambda s-x)e^{-\mu x}$ converges pointwise to $0$ as $\varepsilon \downarrow 0$. Therefore, the dominated convergence theorem gives
   \begin{equation}\label{epsilon_bound2}
         \frac1\varepsilon \int_0^\infty\int_{0}^{\infty} u_{\varepsilon }^2(s,\lambda s-x)e^{-\mu x}\,dx\,ds \xrightarrow[\varepsilon\downarrow0]{}0. 
   \end{equation}
   Combining \eqref{epsilon_bound1} with \eqref{epsilon_bound2}, we get \eqref{epsilon_error_vanishes}. Therefore \[ \mathbb E\big[ \langle f,\widetilde{\mathcal E}^{\lambda}_{\infty}\rangle \big] \ge \frac{1}{\sqrt{2\pi}}\int_{\mathbb R} f(x)e^{-\mu x}\,dx  \] 
   which concludes the proof. 
\end{proof}

 \bibliography{References}
 \bibliographystyle{alpha}
\end{document}